\documentclass[11pt]{amsart}
\usepackage{graphicx}
\usepackage{amssymb}
\usepackage{amscd}
%%%Do elsviera
%\usepackage{amsmath}
%\usepackage{amsthm}
%%%Do elsviera
\usepackage{cite}
%%%Do elsviera
\usepackage[all]{xy}
\usepackage{url}
%\doublespace
\newtheorem{tw}{Theorem}[section]
\newtheorem{prop}[tw]{Proposition}
\newtheorem{lem}[tw]{Lemma}

\theoremstyle{remark}

%%%%%%%%%%%%%%%%%%
\theoremstyle{definition}

%\newtheorem*{dfs}{Definitions}
%%%%%%%%%%%%%%%%%%
%\newcommand{\Section}[1]{\section{#1}}
%\newcommand{\Subsection}[1]{\subsection{#1}}
%%%%%%%%%%%%%%%%
\newcommand{\Cal}[1]{\mathcal{#1}}

\newcommand{\podz}{\subseteq}

\newcommand{\eps}{\varepsilon}
\newcommand{\ro}{\varrho}

\newcommand{\kre}[1]{\overline{#1}}
\newcommand{\gen}[1]{\langle #1 \rangle}
\newcommand{\map}[3]{#1\colon #2\to #3}
\newcommand{\field}[1]{\mathbb{#1}}
\newcommand{\zz}{\field{Z}}

\newcommand{\st}{\;|\;}

\begin{document}

\numberwithin{equation}{section}
\title[The first homology group with twisted coefficients \ldots]
{The first homology group with twisted coefficients for the mapping class group of a non--orientable surface with boundary }
%\title{The first homology group of the mapping class group of a nonorientable surface with twisted coefficients}

\author{Piotr Pawlak \hspace{1em} Micha\l\ Stukow}
%\cortext[cor1]{Supported by NCN grant 2012/05/B/ST1/02171.}

%\thanks{Supported by grant 2015/17/B/ST1/03235 of National Science Centre, Poland.}
%\today
\address[]{
Institute of Mathematics, Faculty of Mathematics, Physics and Informatics, \\ University of Gda\'nsk, 80-308 Gda\'nsk, Poland }

\email{piotrus.pawlak@wp.pl, trojkat@mat.ug.edu.pl}

%\tableofcontents
%\begin{keyword}
%Mapping class group \sep Nonorientable surface \sep Homology of groups 
%\MSC 57N05 \sep 20F38 \sep 57M99
%\end{keyword}

\keywords{Mapping class group, Homology of groups, Non--orientable surface} \subjclass[2020]{Primary 57K20;
Secondary 20J06, 55N25, 22F50}

%\subjclass[2000]{Primary 57N05; Secondary 20F38, 57M99}

% \keywords{Mapping class group, , Nonorientable surface} \subjclass[2000]{Primary 57N05;
% Secondary 20F38, 57M99}

\begin{abstract}
We determine the first homology group with coefficients in $H_1(N;\zz)$ for various mapping class groups 
of a non--orientable surface $N$ with punctures and/or boundary.
\end{abstract}

\maketitle%
 \section{Introduction}%
Let $N_{g,s}^n$ be a smooth, non--orientable, compact surface of genus $g$ with $s$ boundary components and $n$ punctures. If $s$ and/or $n$ is zero, then we omit it from the notation. If we do not want to emphasise the numbers $g,s,n$, we simply write $N$ for a surface $N_{g,s}^n$. Recall that $N_{g}$ is a connected sum of $g$ projective planes and $N_{g,s}^n$ is obtained from $N_g$ by removing $s$ open discs and specifying a set $\Sigma=\{z_1,\ldots,z_n\}$ of $n$ distinguished points in the interior of~$N$.

Let ${\textrm{Diff}}(N)$ be the group of all diffeomorphisms $\map{h}{N}{N}$ such that $h$ is the identity 
on each boundary component and $h(\Sigma)=\Sigma$. By ${\Cal{M}}(N)$ we denote the quotient group of ${\textrm{Diff}}(N)$ by
the subgroup consisting of maps isotopic to the identity, where we assume that isotopies are 
the identity on each boundary component. ${\Cal{M}}(N)$ is called the \emph{mapping class group} of $N$. 

The mapping class group ${\Cal{M}}(S_{g,s}^n)$ of an orientable surface is defined analogously, but we consider 
only orientation preserving maps. 

For any $0\leq k\leq n$, let ${\Cal{PM}}^k(N)$ be the subgroup of ${\Cal{M}}(N)$ consisting of elements which fix $\Sigma$ pointwise and preserve a local orientation around the punctures $\{z_1,\ldots,z_k\}$. For $k=0$,
we obtain so--called \emph{pure mapping class group ${\Cal{PM}}(N)$}, and for $k=n$ we get the group 
${\Cal{PM}^+}(N)$ consisting of maps that preserve local orientation around all the punctures.

\subsection{Background}
Homological computations play a prominent role in the theory of mapping class groups. In the orientable case,
Mumford \cite{MumfordAbel} observed that $H_1({\Cal{M}}(S_g))$ is a quotient of $\zz_{10}$. Then Birman 
\cite{Bir1,Bir1Err} showed that if $g\geq 3$, then $H_1({\Cal{M}}(S_g))$ is a quotient of $\zz_2$, and
Powell \cite{Powell} showed that in fact $H_1({\Cal{M}}(S_g))$ is trivial if $g\geq 3$. As for
higher homology groups, Harer \cite{Harer-stab,Harer3} computed $H_i({\Cal{M}}(S_g))$ for $i=2,3$ 
and Madsen and Weiss \cite{MadWeiss} determined the rational cohomology ring of the stable mapping
class group.

In the non--orientable case, Korkmaz \cite{Kork-non1,Kork-non} computed $H_1({\Cal{M}}(N_g))$ for a 
closed surface $N_g$ (possibly with marked points). This computation was later \cite{Stukow_SurBg} 
extended to the case of a surface with boundary. As for higher homology groups, Wahl 
\cite{Wahl_stab} identified the stable rational cohomology of ${\Cal{M}}(N)$ and Randal-Williams \cite{RandalHomNon} (among other results) extended this identification to $\zz_2$ coefficients.

As for twisted coefficients, Morita in a series of papers \cite{MoritaJacFou,MoritaInv93,MoritaJacProc} obtained several fundamental results, in particual he proved that
\[\begin{aligned}
H_1({\Cal{M}}(S_g);H_1(S_g;\zz))&\cong \zz_{2g-2},\quad \text{for $g\geq 2$},\\
H^1({\Cal{M}}(S_g);H^1(S_g;\zz))&\cong 0,\quad \text{for $g\geq 1$},\\
H^1({\Cal{M}}(S_{g}^1);H^1(S_g^1;\zz))&\cong \zz,\quad \text{for $g\geq 2$},\\
H^1({\Cal{M}}(S_{g,1});\Lambda^3 H_1(S_{g};\zz))&\cong \zz\oplus Z,\quad \text{for $g\geq 3$}.
  \end{aligned}
\]
We showed in \cite{Stukow_homolTopApp} that if  $N_{g,s}$ is a non--orientable surface of genus $g\geq 3$ with $s\leq 1$ boundary components, then
\begin{equation}H_1({\Cal{M}}(N_{g,s});H_1(N_{g,s};\zz))\cong \begin{cases}
                                                 \zz_2\oplus\zz_2\oplus\zz_2 &\text{if $g\in\{3,4,5,6\}$,}\\
                                                 \zz_2\oplus\zz_2&\text{if $g\geq 7$.}
                                                \end{cases}\label{tw:hom:twist}
\end{equation}
There are also similar computations for the hyperelliptic mapping class groups ${\Cal{M}}^h(S_g)$. Tanaka
\cite{Tanaka} showed that $H_1({\Cal{M}}^h(S_g);H_1(S_g;\zz))\cong \zz_{2}$ for $g\geq 2$, and in the non--orientable
case we showed in \cite{Stukow_HiperOsaka} that
\[H_1({\Cal{M}}^h(N_g);H_1(N_g;\zz))\cong \zz_{2}\oplus \zz_2\oplus\zz_2,\quad \text{for $g\geq 3$.}\]
There is also a lot of interesting results concerning the stable twisted (co)homology groups of mapping class groups -- see \cite{SoulieTwsited,Kawazumi2,RandalWhal,Looijenga96} and references there.
\subsection{Main results}
The purpose of this paper is to extendthe the formula \eqref{tw:hom:twist} to the case of surfaces with punctures and/or boundary. We prove the following theorems.
\begin{tw}\label{MainThm1}
If  $N_{g,s}^n$ is a non--orientable surface of genus $g\geq 3$ with $s$ boundary components and $n$ punctures, then
\[H_1({\Cal{PM}^k}(N_{g,s}^n);H_1(N_{g,s}^n;\zz))\cong \begin{cases}
                                                 \zz_2^{3+n} &\text{if $g=3$ and $s=k=0$,}\\
                                                 \zz_2^{1+n+k} &\text{if $g=3$, $s=0$ and $k>0$,}\\
                                                 \zz_2^{n+3s+k} &\text{if $g=3$ and $s>0$,}\\
                                                 \zz_2^{3+n-k}&\text{if $g=4$ and $s=0$,}\\
                                                 \zz_2^{2+n+s-k}&\text{if $g=4$ and $s>0$,}\\
                                                 \zz_2^{3+n-k}&\text{if $g=5$ or $g=6$,}\\
                                                 \zz_2^{2+n-k}&\text{if $g\geq 7$.}
                                                \end{cases}
\] 
\end{tw}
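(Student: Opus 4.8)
The plan is to bootstrap from the known formula \eqref{tw:hom:twist} using the Birman exact sequences that relate $\mathcal{PM}^k(N_{g,s}^n)$ to mapping class groups of surfaces with fewer punctures and boundary components, and to extract $H_1$ from the resulting Lyndon--Hochschild--Serre spectral sequences. Write $N=N_{g,s}^n$, $G=\mathcal{PM}^k(N)$ and $M=H_1(N;\zz)$, so that $M$ is free abelian of rank $g+s+n-1$ whenever $s+n\geq 1$ (the remaining closed cases $s=n=0$ being exactly \eqref{tw:hom:twist}). I would run the induction along three kinds of extensions: capping a boundary component with a once--punctured disc, giving a central extension
\[
1\to\zz\to \mathcal{PM}^k(N_{g,s}^n)\to \mathcal{PM}^{k+1}(N_{g,s-1}^{n+1})\to 1
\]
whose kernel is generated by the boundary Dehn twist and acts trivially on $M$; forgetting a puncture $z_n$, giving a point--pushing extension
\[
1\to \pi\to \mathcal{PM}^k(N_{g,s}^n)\to \mathcal{PM}^{k'}(N_{g,s}^{n-1})\to 1,
\]
where $\pi$ is a free group -- the full surface group $\pi_1(N_{g,s}^{n-1}\setminus\{z_n\})$ if $z_n$ carries no local orientation (and then $k'=k$), or its index--two orientation--preserving subgroup if it does (and then $k'=k-1$); and finally the extensions $1\to \mathcal{PM}^{k}(N)\to \mathcal{PM}^{k-1}(N)\to\zz_2\to 1$ coming from reversing the local orientation at $z_k$, which let one pass along the tower $\mathcal{PM}^+(N)=\mathcal{PM}^n(N)\leq\cdots\leq\mathcal{PM}^0(N)=\mathcal{PM}(N)$ and which is where the discontinuity between $k=0$ and $k>0$ originates.

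For each extension $1\to K\to G\to Q\to 1$ the inputs I need are the $Q$--modules $H_0(K;M)=M_K$ and $H_1(K;M)$, which I would compute explicitly from the (symplectic--type) action of the mapping class group on $H_1$: for $K=\zz$ acting trivially one has $M_K=H_1(K;M)=M$; for a point--pushing free group $\pi$ the modules $M_\pi$ and $H_1(\pi;M)$ are read off from the intersection numbers of the basis curves with the loop around $z_n$, using the length--one free resolution of $\zz$ over $\zz\pi$. I also need the coinvariants $M_Q$ (a small $2$--torsion group, essentially detecting the $\zz_2\subseteq H_1(N_g;\zz)$ together with one class per remembered boundary curve or puncture; this is standard but must be done carefully for surfaces with boundary and punctures), and the outgoing term $H_1(Q;M_K)$, which is supplied by the inductive hypothesis. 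The computation is then assembled through the five--term exact sequence
\[
H_2(Q;M_K)\xrightarrow{\;d_2\;} H_0(Q;H_1(K;M))\to H_1(G;M)\to H_1(Q;M_K)\to 0 .
\]

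The main obstacle is controlling the transgression $d_2$ and the ensuing extension problem at each step: a priori $H_1(G;M)$ is only pinned down up to an extension of a subgroup of $H_1(Q;M_K)$ by a quotient of $H_0(Q;H_1(K;M))$, so one must identify the image of $d_2$ and show the extension splits. I would resolve this by a squeezing argument -- exhibiting enough explicit $1$--cycles (Dehn twists, crosscap slides and boundary twists tensored with concrete homology classes) to bound $H_1(G;M)$ from below, and enough relations, drawn from a finite presentation of $G$, to bound it from above -- using crucially that all the groups in the answer are elementary abelian $2$--groups, which makes the matching of bounds manageable. A secondary difficulty is the genuine case analysis: \eqref{tw:hom:twist} is already exceptional for $g\in\{3,4,5,6\}$, and the behaviour in genus $3$ and in genus $4$ with boundary differs from the stable range $g\geq 7$, so the low--genus relations special to $N_3$ and $N_4$ must be tracked through every extension. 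As a consistency check on the base and smallest cases, and to fix the exact constants in the formula, I would also compute $H_1(G;M)$ directly from an explicit finite presentation of $G$ via Fox calculus, i.e. as $\ker\bigl(\bigoplus_{x}M\to M\bigr)/\operatorname{im}\bigl(\bigoplus_{r}M\to\bigoplus_{x}M\bigr)$.
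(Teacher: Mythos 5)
Your overall philosophy --- squeezing $H_1(G;M)$ between an upper bound obtained from relations and a lower bound obtained from explicitly detected classes --- is in the end what the paper does, but the specific route you propose has concrete gaps. First, both your ``consistency check'' and the upper--bound half of the squeeze rely on ``a finite presentation of $G$'' and Fox calculus; no such presentation is known for ${\Cal{PM}}^k(N_{g,s}^n)$, and the paper stresses exactly this point (presentations are available only for ${\Cal{M}}(N_{g,s})$ with $s\le 1$). What the paper does instead is first prove new generation theorems (Theorems \ref{tw:gen:pure:plus}--\ref{tw:gen:k}, which require nontrivial lantern-- and chain--relation arguments to eliminate most boundary twists), and then feed a hand--picked list of verifiable relations into the rewriting formula \eqref{eq_rew_rel}; the resulting group is then only known to \emph{surject} onto $H_1(G;M)$, so nothing is proved until the lower bound matches it exactly. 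Second, and more seriously, your plan offers no mechanism for producing the classes that account for the case distinctions in the statement: the boundary--twist classes $d_{j,1}$ (nontrivial only for $g\le 4$, responsible for the $3s$ in genus $3$ and the extra $s$ in genus $4$) and the genus--$3$ classes $a_{1,3+j},u_{1,3+j}$ attached to each boundary component and each orientation--preserved puncture (responsible for the $+k$). In your spectral--sequence setup these live in $H_0(Q;H_1(K;M))$ and their survival is precisely the $d_2$/extension ambiguity you defer; the paper detects them by embedding $N_{g,s}^n$ into a closed surface of genus $g+2$ (joining two boundary components by a cylinder so that $d_j$ becomes a twist about a nonseparating curve) and, in genus $3$, by importing the separately computed $H_1({\Cal{PM}}^+(N_3^2);H_1(N_3^2;\zz))\cong\zz_2^5$ from an earlier paper. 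Without inputs of this kind the induction cannot close.

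A further technical obstruction to the inductive scheme itself: in the point--pushing step the $Q$--module $M_K=H_0(K;M)$ need not be $H_1(N_{g,s}^{n-1};\zz)$. The image of $\psi(k)-1$ consists of multiples of the class of the loop around $z_n$, which equals $-(2\gamma_1+\dots+2\gamma_g+\delta_1+\dots+\delta_{s+n-1})$, and depending on which multiples occur (only even ones for pushes preserving local orientation) the coinvariants acquire $2$--torsion; similarly, for the capping sequence the central $\zz$ contributes $M_Q$ to $H_1(G;M)$, not a copy of the answer one step down. So the inductive hypothesis does not apply verbatim to $H_1(Q;M_K)$, and each of your three families of extensions requires a separate coefficient computation before the five--term sequence can even be written down.
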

\begin{tw}\label{MainThm2}
If  $N_{g,s}^n$ is a non--orientable surface of genus $g\geq 3$ with $s$ boundary components and $n\geq 2$ punctures, then
\[H_1({\Cal{M}}(N_{g,s}^n);H_1(N_{g,s}^n;\zz))\cong \begin{cases}
                                                \zz_2^{5}&\text{if $g\in\{3,4\}$ and $s=0$,}\\
                                                 \zz_2^{3s+2} &\text{if $g=3$ and $s>0$,}\\
                                                 \zz_2^{4+s}&\text{if $g=4$ and $s>0$,}\\
                                                 \zz_2^{5}&\text{if $g=5$ or $g=6$,}\\
                                                 \zz_2^{4}&\text{if $g\geq 7$.}
                                                \end{cases}
\] 
\end{tw}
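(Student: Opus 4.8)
The plan is to deduce Theorem~\ref{MainThm2} from Theorem~\ref{MainThm1}. Write $N=N_{g,s}^n$ and $M=H_1(N;\zz)$. Since ${\Cal{PM}}(N)$ consists of the mapping classes fixing the puncture set $\Sigma$ pointwise while ${\Cal{M}}(N)$ only fixes it setwise, recording the induced permutation of $\Sigma$ gives a short exact sequence
\[1\longrightarrow {\Cal{PM}}(N)\longrightarrow {\Cal{M}}(N)\longrightarrow \Sigma_n\longrightarrow 1,\]
with $\Sigma_n$ the symmetric group on $n$ letters. Running the Lyndon--Hochschild--Serre spectral sequence of this extension with coefficients in $M$, its five--term exact sequence reads
\[H_2({\Cal{M}}(N);M)\to H_2(\Sigma_n;M_{{\Cal{PM}}})\xrightarrow{\ d\ }\bigl(H_1({\Cal{PM}}(N);M)\bigr)_{\Sigma_n}\to H_1({\Cal{M}}(N);M)\to H_1(\Sigma_n;M_{{\Cal{PM}}})\to 0,\]
where $M_{{\Cal{PM}}}=H_0({\Cal{PM}}(N);M)$ is viewed as a $\Sigma_n$--module and $(-)_{\Sigma_n}$ denotes coinvariants. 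Everything then reduces to identifying the two outer terms, the coinvariant term, and the differential~$d$.

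I would first show $M_{{\Cal{PM}}}\cong\zz_2$ with trivial $\Sigma_n$--action. Writing $b=a_1+\dots+a_g$ for the sum of the crosscap classes in $H_1(N;\zz)$, $\partial_1,\dots,\partial_s$ for the boundary classes and $\gamma_1,\dots,\gamma_n$ for the classes of small loops around the punctures, one has the relation $2b+\partial_1+\dots+\partial_s+\gamma_1+\dots+\gamma_n=0$. Pushing a puncture $z_i$ along a non--separating two--sided curve $\ell$ acts on $H_1(N;\zz)$ by $x\mapsto x+\langle x,\ell\rangle\gamma_i$, and the analogous boundary--pushing maps act by $x\mapsto x+\langle x,\ell\rangle\partial_j$; taking $\ell$ with $\langle x,\ell\rangle=1$ kills every $\gamma_i$ and every $\partial_j$ in the coinvariants, while Dehn twists and crosscap slides collapse the genus part down to $b$, and the relation then forces $2b=0$. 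As a half--twist of punctures fixes $b$, the residual $\Sigma_n$--action on $M_{{\Cal{PM}}}\cong\zz_2\cdot b$ is trivial, so $H_1(\Sigma_n;M_{{\Cal{PM}}})\cong H_1(\Sigma_n;\zz_2)\cong\zz_2$ for every $n\ge2$; together with the vanishing of $d$ proved below, this is what makes the final answer independent of $n$ ($H_2(\Sigma_n;M_{{\Cal{PM}}})$ is a finite $2$--group, but its precise value will be irrelevant once $d=0$).

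For the coinvariant term I would feed in Theorem~\ref{MainThm1} with $k=0$, giving $H_1({\Cal{PM}}(N);M)\cong\zz_2^{c+n}$ with $c=c(g,s)$ read off the statement ($c=3$ if $g=3,s=0$; $c=3s$ if $g=3,s>0$; $c=3$ if $g=4,s=0$; $c=2+s$ if $g=4,s>0$; $c=3$ if $g\in\{5,6\}$; $c=2$ if $g\ge7$). Using the explicit generators produced in the proof of Theorem~\ref{MainThm1}, I would check that $\Sigma_n$ fixes the $c$ ``genus and boundary'' summands and permutes the remaining $n$ ``puncture'' summands as the rank--$n$ permutation $\zz_2$--module, whose coinvariants are $\zz_2$; hence $\bigl(H_1({\Cal{PM}}(N);M)\bigr)_{\Sigma_n}\cong\zz_2^{c+1}$. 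Next I would show the transgression $d$ vanishes by lifting the generator of $H_1(\Sigma_n;M_{{\Cal{PM}}})$ to the twisted $1$--cycle represented by $\sigma\otimes b$, where $\sigma\in{\Cal{M}}(N)$ is a half--twist swapping two punctures (so $\sigma b=b$): this gives a class in $H_1({\Cal{M}}(N);M)$ mapping onto the generator, so the edge homomorphism $H_1({\Cal{M}}(N);M)\to H_1(\Sigma_n;M_{{\Cal{PM}}})$ is onto and $d=0$. Finally, to see the extension $0\to\zz_2^{c+1}\to H_1({\Cal{M}}(N);M)\to\zz_2\to0$ has no $\zz_4$ summand, I would use an independent upper bound on $H_1({\Cal{M}}(N);M)$ coming from a finite presentation of ${\Cal{M}}(N_{g,s}^n)$ via the Fox--calculus description of twisted $H_1$. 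Altogether $H_1({\Cal{M}}(N);M)\cong\zz_2^{c+2}$, which matches the value asserted in each of the seven cases.

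The step I expect to be the main obstacle is pinning down the modules $M_{{\Cal{PM}}}$ and $H_1({\Cal{PM}}(N);M)$ together with their $\Sigma_n$--actions precisely enough: the homology of symmetric groups is very sensitive to the coefficient module, so it is exactly these identifications (in particular that $M_{{\Cal{PM}}}$ is just $\zz_2$ with trivial action, independent of $s$ and $n$) that make the computation come out cleanly. The vanishing of $d$ and the extension problem are the other delicate points, being the only places where one must genuinely construct classes in $H_1({\Cal{M}}(N);M)$ rather than merely bound its order.
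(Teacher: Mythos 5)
Your strategy (the five--term exact sequence of $1\to{\Cal{PM}}(N)\to{\Cal{M}}(N)\to S_n\to1$) is genuinely different from the paper's, which never invokes a spectral sequence: the paper instead takes the explicit generating set of Theorem~\ref{tw:gen:k}, rewrites relations via \eqref{eq_rew_rel} to cut the generators of $H_1$ down to the list in Proposition~\ref{prop:h1:vs} with every generator of order at most $2$, and then detects each of them by homomorphisms to $H_1({\Cal{M}}(N_g);H_1(N_g;\zz))$, to $H_1({\Cal{M}}(N_3^2);H_1(N_3^2;\zz))$, and to $H_1(\zz_2;\zz_2)$. Your bookkeeping of the outer terms is essentially right ($M_{{\Cal{PM}}}\cong\zz_2$ with trivial $S_n$--action, coinvariants of the $v_{j,1}$'s forming a permutation module), modulo the small slip that your chosen generator $b=\gamma_1+\dots+\gamma_g$ of $M_{{\Cal{PM}}}$ vanishes when $g$ is even --- the class of a single $\gamma_1$ is the correct generator.

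However, there are two genuine gaps at exactly the points you flag as delicate. First, your argument that the transgression $d\colon H_2(S_n;M_{{\Cal{PM}}})\to\bigl(H_1({\Cal{PM}}(N);M)\bigr)_{S_n}$ vanishes does not address $d$ at all: exhibiting a class of $H_1({\Cal{M}}(N);M)$ hitting the generator of $H_1(S_n;M_{{\Cal{PM}}})$ only reproves the surjectivity of the last edge map, which is automatic from the five--term sequence. Vanishing of $d$ is equivalent to injectivity of $\bigl(H_1({\Cal{PM}}(N);M)\bigr)_{S_n}\to H_1({\Cal{M}}(N);M)$, and establishing that requires detecting all $c+1$ coinvariant classes inside $H_1({\Cal{M}}(N);M)$ --- i.e.\ precisely the lower--bound work the paper carries out in Section~\ref{sec:pm:below} with the maps $i_*$ and $(\alpha,\beta)$. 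Without it your sequence only yields an upper bound $2^{c+2}$ and a lower bound of $2$. Second, your resolution of the extension problem appeals to ``a finite presentation of ${\Cal{M}}(N_{g,s}^n)$'' fed into Fox calculus, but the paper explicitly states that no such presentation is available for these groups (only for ${\Cal{M}}(N_{g,s})$ with $s\le1$); the paper instead proves that every surviving generator has order at most $2$ by hand in Propositions~\ref{h1:g3}--\ref{prop:h1:vs}. Both gaps must be filled by arguments of the same kind the paper uses, so the spectral--sequence framing, while clean, does not actually bypass the hard content.
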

Note that we obtained the formula (1.1) from the full presentation for the mapping class group ${\Cal{M}}(N_{g,s})$, where $g+s\geq 3$ and $s\in\{0,1\}$, obtained by Paris and Szepietowski \cite{SzepParis}. However, we do not have full presentations for the groups ${\Cal{PM}^k}(N_{g,s}^n)$ and ${\Cal{M}}(N_{g,s}^n)$, which makes our computation less straightforward.

The starting point for this computation is a simplification of known generating sets for the groups ${\Cal{PM}^k}(N_{g,s}^n)$ and ${\Cal{M}}(N_{g,s}^n)$ -- see Theorems \ref{tw:gen:pure:plus}, \ref{tw:gen:pure} and \ref{tw:gen:k} in Section \ref{sec:gen}. Then, in Sections \ref{sec:above}, \ref{secK:above} and \ref{secM:above} we perform a detailed analysis of possible relations between these generators in order to obtain a minimal set of generators for the first homology group -- see Propositions~\ref{h1:g3}, \ref{h1:g4567}, \ref{h1v:g4567}, \ref{h1v:g3} and \ref{prop:h1:vs}. The proofs that these sets of generators are indeed linearly independent occupy Sections \ref{sec:pmk:below} and \ref{sec:pm:below}. One essential ingredient in these two sections is our recent computation \cite{PawlakStukowHomoPuncturedG3} of the homology group
\[H_1({\Cal{PM}^+}(N_{3}^2);H_1(N_{3}^2;\zz))\cong \zz_2^{5}.\]
Section \ref{sec:action} is devoted to the technical details of the action of the mapping class group ${\Cal{M}}(N)$ on the first homology group $H_1(N;\zz)$. This analysis is continued in Section \ref{sec:kernel}, where we set up a technical background for the computations of the twisted first homology group of various mapping class groups -- see Propositions \ref{prop:kernel:1}, \ref{prop:kernel:2} and \ref{prop:kernel:3}.
 \section{Preliminaries}
\subsection{Non--orientable surfaces}
Represent the surface $N_{g,s}^n$ as a sphere with $g$ crosscaps $\mu_1,\ldots,\mu_g$, $n$ marked points $z_1,\ldots,z_n$, and $s$ boundary components (Figure \ref{r01}). Let 
\[\alpha_1,\ldots,\alpha_{g-1}, \beta_1,\ldots,\beta_{\left\lfloor\frac{g-2}{2}\right\rfloor},
\kre{\beta}_0,\ldots,\kre{\beta}_{\left\lfloor\frac{g-2}{2}\right\rfloor},
\delta_1,\ldots,\delta_s,\eps_1,\ldots,\eps_{s+n}\]
be two--sided circles indicated in Figures \ref{r01}, \ref{r02} and \ref{r04}. 
\begin{figure}[h]
\begin{center}
\includegraphics[width=0.84\textwidth]{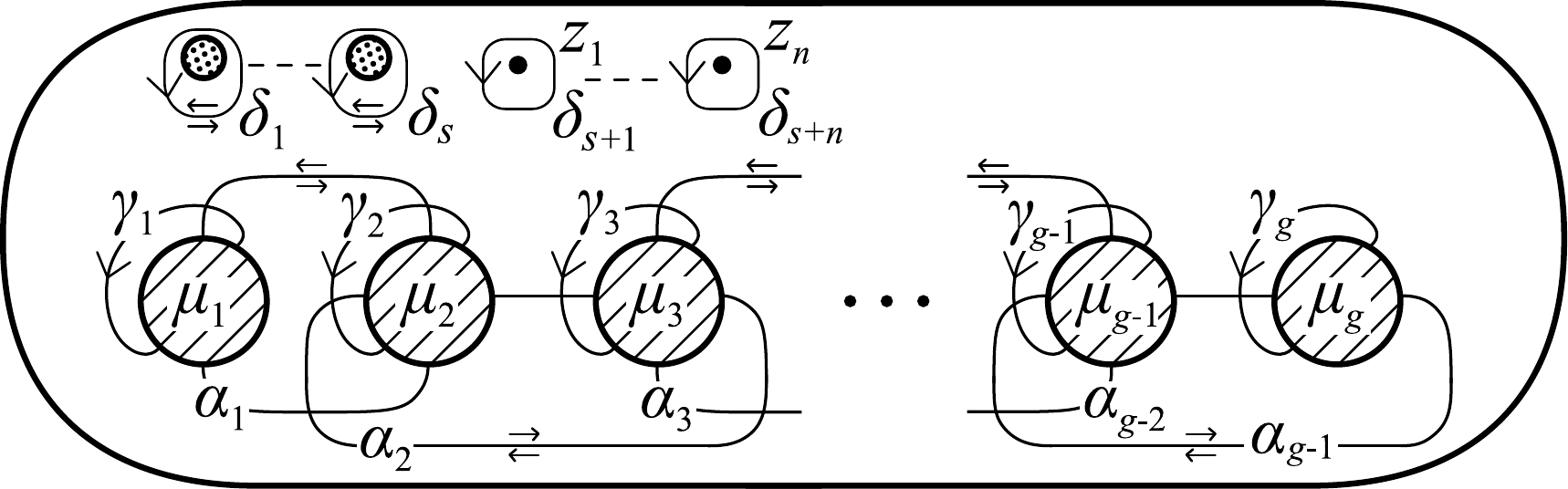}
\caption{Surface $N_{g,s}^n$ as a sphere with crosscaps.}\label{r01} %
\end{center}
\end{figure}
\begin{figure}[h]
\begin{center}
\includegraphics[width=0.95\textwidth]{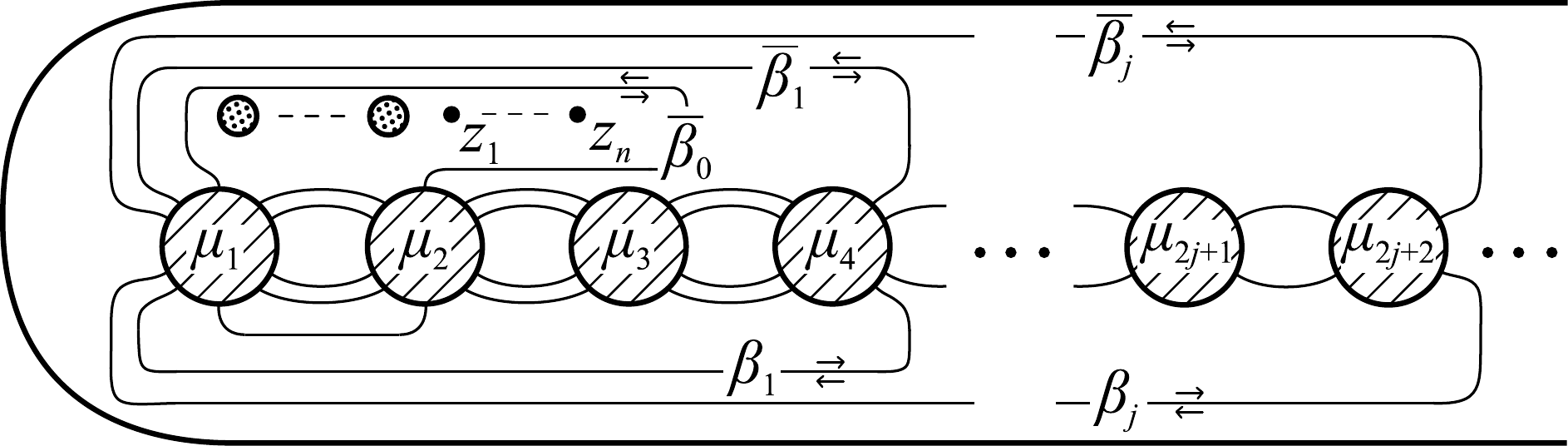}
\caption{Circles $\beta_1,\beta_2,\ldots,\beta_{\left\lfloor\frac{g-2}{2}\right\rfloor}$ and $\kre{\beta}_0,\kre{\beta}_1,\ldots,\kre{\beta}_{\left\lfloor\frac{g-2}{2}\right\rfloor}$.}\label{r02} %
\end{center}
\end{figure}
\begin{figure}[h]
\begin{center}
\includegraphics[width=0.84\textwidth]{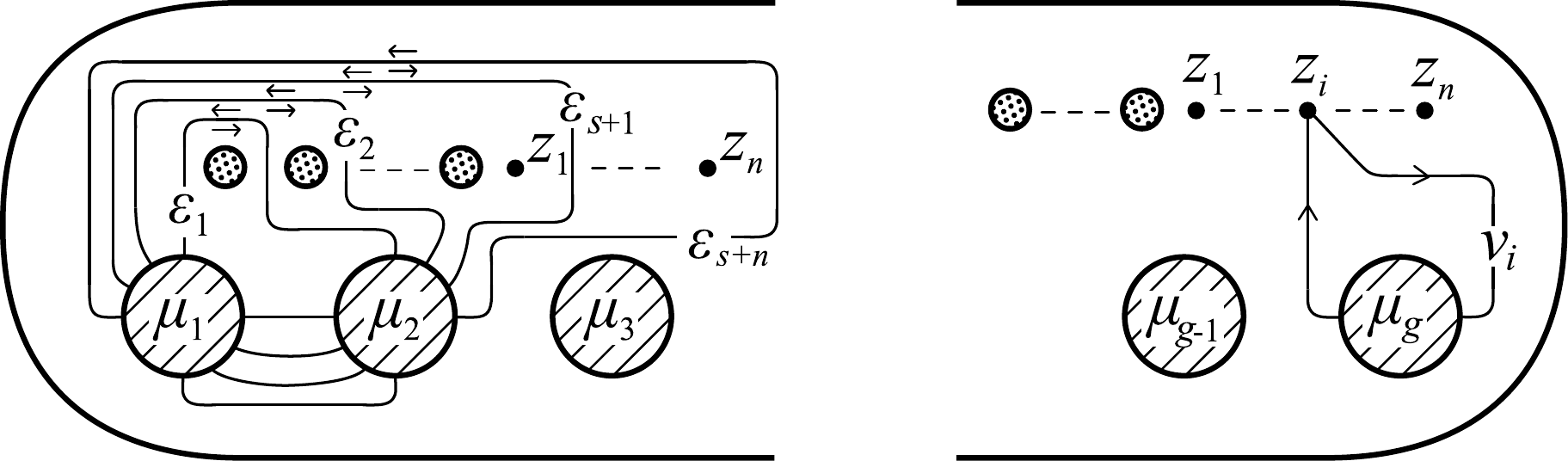}
\caption{Circles $\eps_1,\eps_2,\ldots,\eps_{s+n}$ and paths $\nu_1,\ldots,\nu_n$.}\label{r04} %
\end{center}
\end{figure}
Small arrows in these figures indicate directions of Dehn twists 
\[a_1,\ldots,a_{g-1}, b_1,\ldots,b_{\left\lfloor\frac{g-2}{2}\right\rfloor},
\kre{b}_0,\ldots,\kre{b}_{\left\lfloor\frac{g-2}{2}\right\rfloor},
d_1,\ldots,d_s,e_1,\ldots,e_{s+n}
\] associated with these circles. We also define: $\eps_0=\alpha_1$ and $e_0=a_1$.

For any two consecutive crosscaps $\mu_i,\mu_{i+1}$ we define a \emph{crosscap transposition} $u_i$ 
to be the map which interchanges these two crosscaps (see Figure \ref{r03}). Similarly, for any two consecutive punctures $z_i,z_{i+1}$ we define \emph{elementary braid} $s_i$ (Figure \ref{r03}).
\begin{figure}[h]
\begin{center}
\includegraphics[width=0.8\textwidth]{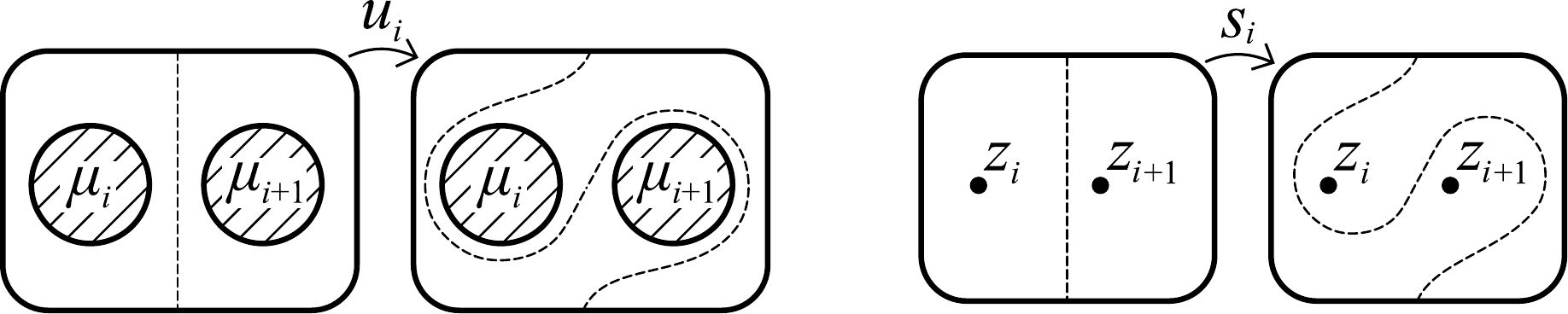}
\caption{Crosscap transposition $u_i$ and elementary braid $s_i$.}\label{r03} %
\end{center}
\end{figure}

Finally, let $v_i$, for $i=1,\ldots,n$ be the puncture slide of $z_i$ along the path $\nu_i$ indicated in Figure \ref{r04}.

\subsection{Homology of groups}
Let us briefly review how to compute the first homology of a group with twisted coefficients -- for more details see Section~5 of \cite{Stukow_HiperOsaka} and references there. 

For a given group $G$ and $G$-module $M$ (that is $\zz G$-module) we define $C_2(G)$ and $C_1(G)$ as the free $G$-modules generated
respectively by symbols $[h_1|h_2]$ and $[h_1]$, where $h_i\in G$. We define also $C_0(G)$ as the free $G$-module generated by the
empty bracket $[\cdot]$. Then the first homology group $H_1(G;M)$ is the first homology group of the complex
\[\xymatrix@C=3pc@R=3pc{C_2(G)\otimes_G M\ar[r]^{\ \partial_2\otimes_G {\rm id}}&C_1(G)\otimes_G M
\ar[r]^{\ \partial_1\otimes_G {\rm id}}&C_0(G)\otimes_G M},\]
where 
\[
\begin{aligned}
\partial_2([h_1|h_2])&=h_1[h_2]-[h_1h_2]+[h_1],\\
   \partial_1([h])&=h[\cdot]-[\cdot].
  \end{aligned}
\]
For simplicity, we write $\otimes_G=\otimes$ and $\partial\otimes {\rm id}=\kre{\partial}$ henceforth.

If the group $G$ has a presentation $G=\langle X\,|\,R\rangle$, then we have an excat sequence
\begin{equation}\label{ex:sq:free}
\xymatrix{1\ar[r]&N(R)\ar[r]&F(X)\ar[r]&G\ar[r]&1},
\end{equation}
where $F(X)$ is the free group generated by elements of $X$ (generators) and $N(R)$ is the normal closure in $F(X)$ of the set of relations $R$. Sequence \eqref{ex:sq:free} gives the following excat seqence (see for example \cite{EckmannHom} and references there)
\[\label{ex:sq:f}
\xymatrix{N(R)/[N(R),N(R)]\otimes_G M\ar[r]^-{i}&H_1(F(X);M)\ar[r]&H_1(G;M)\ar[r]&0},
\]
where $N(R)/[N(R),N(R)]$ is the abelianization of $N(R)$. Hence, we can identify
\[H_1(G;M)=H_1(F(X);M)/\textrm{Im}(i).\]
Let us now describe how to use the above identification in practice.
%By abouse of notatnion we use the same letters for the generators of $F(X)$ and their images in $G$, and we define
Let
\[\gen{\kre{X}}=\gen{[x]\otimes m\st x\in X, m\in M}\podz C_1(F(X))\otimes M,\]
then $H_1(G;M)$ is a quotient of  $\gen{\kre{X}}\cap \ker\kre{\partial}_1$.

The kernel of this quotient corresponds to relations in $G$
(that is elements of $R$). To be more precise, if
$r\in R$ has the form $x_1\cdots x_k=y_1\cdots y_n$ and $m\in M$, then 
%one can show (using the formula for
%$\partial_2$) that 
$i(r)\in H_1(F(X);M)$ is equal to
\begin{equation}
 \kre{r}\otimes m\!:\ \sum_{i=1}^{k}x_1\cdots x_{i-1}[x_i]\otimes m-\sum_{i=1}^{n}y_1\cdots y_{i-1}[y_i]\otimes m.\label{eq_rew_rel}
\end{equation}
Then 
\[H_1(G;M)=\gen{\kre{X}}\cap \ker\kre{\partial}_1/\gen{\kre{R}},\]
where 
\[\kre{R}=\{\kre{r}\otimes m\st r\in R,m\in M\}.\]
 \section{Action of ${\Cal{M}}(N_{g,s}^n)$ on $H_1(N_{g,s}^n;\zz)$}\label{sec:action}
Let $\gamma_1,\ldots,\gamma_{g},\delta_1,\ldots,\delta_{s+n}$ be circles indicated in Figure \ref{r01}. Note that $\gamma_1,\ldots,\gamma_{g}$ are one--sided, $\delta_1,\ldots,\delta_{s+n}$ are two--sided and the $\zz$-module $H_1(N_{g,s}^n;\zz)$ is generated by homology classes $[\gamma_1],\ldots,[\gamma_g],[\delta_1],\ldots,[\delta_{s+n-1}]$. These generators are free provided $s+n>0$. In abuse of notation we will not distinguish between the curves $\gamma_1,\ldots,\gamma_{g},\delta_1,\ldots,\delta_{s+n}$ and their cycle classes.%\footnote{dla pamieci: nie ma $\delta$ jezeli $s+n=1$}

%  Recall that if $s\in\{0,1\}$, then the $\zz$-module $H_1(N_{g,s};\zz)$ is generated by $\gamma_1=[c_1],\ldots,\gamma_g=[c_g]$. If $s=1$ then $\gamma_1,\ldots,\gamma_g$ are free generators of $H_1(N_{g,s};\zz)$, and if $s=0$ then they generate $H_1(N_{g,s};\zz)$ with respect to the single relation
% \[2(\gamma_1+\gamma_2+\cdots+\gamma_g)=0.\]
The mapping class group ${\Cal{M}}(N_{g,s}^n)$ acts on $H_1(N_{g,s}^n;\zz)$, hence we have a representation
\[\map{\psi}{{\Cal{M}}(N_{g,s}^n)}{\textrm{Aut}(H_1(N_{g,s}^n;\zz))}. \]
It is straightforward to check that
\begin{equation}\begin{aligned}\label{eq:psi}
   \psi(a_j)&=I_{j-1}\oplus \begin{bmatrix}
0&1\\
-1&2\end{bmatrix}\oplus I_{g-j-1}\oplus I_{s+n-1},\\
\psi(a_j^{-1})&=I_{j-1}\oplus \begin{bmatrix}
2&-1\\
1&0
    \end{bmatrix}\oplus I_{g-j-1}\oplus I_{s+n-1},\\
    \psi(u_j)=\psi(u_j^{-1})&=I_{j-1}\oplus \begin{bmatrix}
0&1\\
1&0
    \end{bmatrix}\oplus I_{g-j-1}\oplus I_{s+n-1},
\end{aligned}\end{equation} %\footnote{change to $u_{g-1}$}    
    \begin{equation}\begin{aligned}
    %\psi(b_j^{-1})&=\\
    \psi(b_1)&=\left(I_g+
    \begin{bmatrix}-1&1&-1&1\\
     -1&1&-1&1\\
     -1&1&-1&1\\
     -1&1&-1&1
    \end{bmatrix}\oplus 0_{g-4}\right)\oplus I_{s+n-1},\\
    \psi(b_1^{-1})&=\left(I_g+
    \begin{bmatrix}1&-1&1&-1\\
     1&-1&1&-1\\
     1&-1&1&-1\\
     1&-1&1&-1
    \end{bmatrix}\oplus 0_{g-4}\right)\oplus I_{s+n-1},\\
    \end{aligned}\end{equation}    
    \begin{equation}\begin{aligned}
    \psi(e_j)(\xi)&=\begin{cases}
                        -\gamma_2-\delta_1-\delta_2+\ldots-\delta_j&\text{if $\xi=\gamma_1$ and $j<s+n$,}\\
                        \gamma_1+2\gamma_2+\delta_1+\delta_2+\ldots+\delta_j&\text{if $\xi=\gamma_2$ and $j<s+n$,}\\
                       \xi&\text{if $\xi\neq \gamma_1$, $\xi\neq \gamma_2$ and $j<s+n$,}
                      \end{cases}\\
                      \psi(e_j^{-1})(\xi)&=\begin{cases}
                        2\gamma_1+\gamma_2+\delta_1+\delta_2+\ldots+\delta_j&\text{if $\xi=\gamma_1$ and $j<s+n$,}\\
                        -\gamma_1-\delta_1-\delta_2+\ldots-\delta_j&\text{if $\xi=\gamma_2$ and $j<s+n$,}\\
                       \xi&\text{if $\xi\neq \gamma_1$, $\xi\neq \gamma_2$ and $j<s+n$,}
                      \end{cases}\\
%     \left[
% \begin{array}{c|c}
% \begin{bmatrix}
% 0&1\\
% -1&2\end{bmatrix}\oplus I_{g-2} & 0 \\
% \hline
% \begin{bmatrix}-1&1&0&0\\
%      \ldots\\
%      -1&1&0&0
%     \end{bmatrix}_{j\times j}\oplus 0_{g-j} & I_{s+n-1}
% \end{array}
% \right]    \\    
    \psi(d_j)&=\psi(d_j^{-1})=I_{g}\oplus I_{s+n-1},\\
    \end{aligned}\end{equation}    
    \begin{equation}\begin{aligned}
    \psi(s_j)=\psi(s_j^{-1})&=I_g\oplus  I_{s+j-1}\oplus \begin{bmatrix}
0&1\\
1&0
    \end{bmatrix}\oplus I_{n-j-2}, \text{ if $j<n-1$,}\\
    \psi(s_{n-1})(\xi)=\psi(s_{n-1}^{-1})(\xi)&=\begin{cases}
                -(2\gamma_1+\ldots+2\gamma_g-\delta_1+\ldots+\delta_{s+n-1})&\text{if $\xi=\delta_{s+n-1}$,}\\
                \xi&\text{if $\xi\neq\delta_{s+n-1}$,}
                                                \end{cases}
                                   \end{aligned}\end{equation}    
    \begin{equation}\label{eq:vi}\begin{aligned}             
\psi(v_j)(\xi)&=\psi(v_j^{-1})(\xi)=\begin{cases}
                -\delta_{s+j}&\text{if $\xi=\delta_{s+j}$ and $j<n$,}\\
                \gamma_g+\delta_{s+j}&\text{if $\xi=\gamma_g$ and $j<n$,}\\
                \xi&\text{if $\xi\neq \gamma_g$ and $\xi\neq \delta_{s+j}$ and $j<n$,}
               \end{cases}\\
               \psi(v_n)(\xi)&=\psi(v_n^{-1})(\xi)=\begin{cases}
                \gamma_g-(2\gamma_1+\ldots+2\gamma_g+\delta_1+\ldots+\delta_{s+n-1})&\text{if $\xi=\gamma_g$,}\\
                \xi&\text{if $\xi\neq \gamma_g$,}
               \end{cases}
  \end{aligned}
\end{equation}
where $I_k$ is the identity matrix of rank $k$.
\section{Generators for the groups ${\Cal{PM}}^k(N_{g,s}^n)$ and ${\Cal{M}}(N_{g,s}^n)$}\label{sec:gen}
The main goal of this section is to obtain simple generating sets for the groups ${\Cal{PM}}^k(N_{g,s}^n)$ and ${\Cal{M}}(N_{g,s}^n)$ -- see Theorems \ref{tw:gen:pure:plus},  \ref{tw:gen:pure} and \ref{tw:gen:k} below. However, we first prove some technical lemmas.

For the rest of this section, let $a_{j,i},\widehat{b}_1,\widehat{b}_{1,i}, \widehat{d}_i$ for $j=1,2,3,4$, $i=1,\ldots,s+n$ be Dehn twists about circles $\alpha_{j,i},\widehat{\beta}_1,\widehat{\beta}_{1,i},\widehat{\delta}_i$ shown in Figure \ref{r05}.
\begin{figure}[h]
\begin{center}
\includegraphics[width=0.99\textwidth]{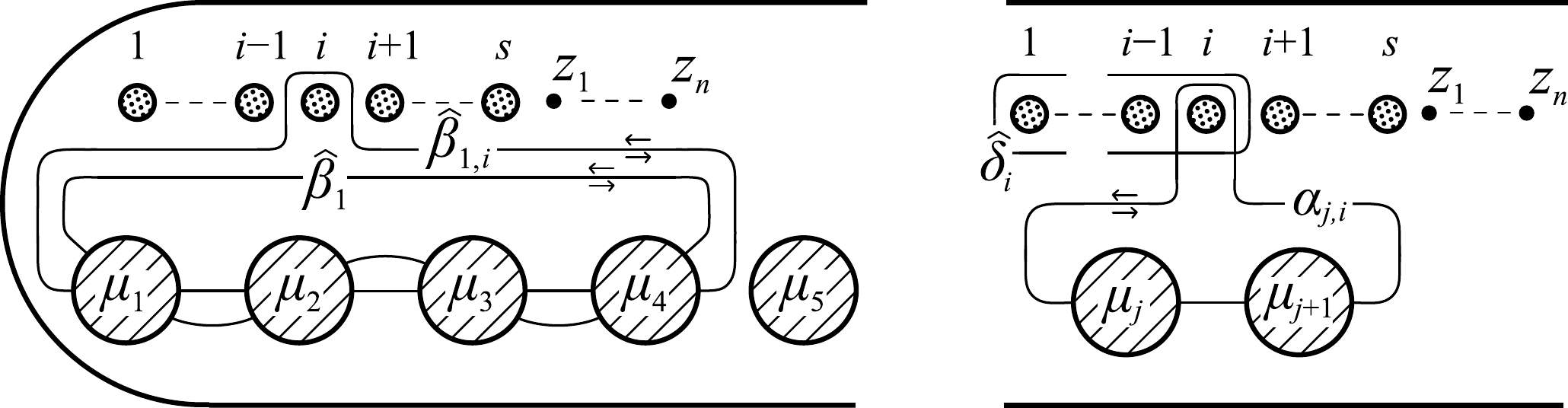}
\caption{Circles $\alpha_{j,i},\widehat{\beta}_1,\widehat{\beta}_{1,i},\widehat{\delta}_i$.}\label{r05} %
\end{center}
\end{figure}
\begin{lem}\label{lem:a:1i}
 Let $g\geq 3$ and $1\leq i\leq s+n$, then $a_{1,i}$ and $a_{2,i}$ are in the subgroup $G\leq {\Cal{M}}(N_{g,s}^n)$ generated by
 \[\left\{u_1,u_2,a_2,e_{i-1},e_{i}\right\}.\]
\end{lem}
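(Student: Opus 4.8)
The plan is to exhibit $a_{1,i}$ and $a_{2,i}$ explicitly as products of elements of the generating set $\{u_1,u_2,a_2,e_{i-1},e_i\}$, using the standard fact that mapping classes act naturally under conjugation: if $h\in{\Cal M}(N)$ maps a circle $c$ to a circle $c'$, then $h t_c h^{-1}=t_{c'}^{\pm 1}$, where $t_c$ denotes the Dehn twist about $c$. So the whole argument reduces to tracking how the circles $\alpha_{1,i}$ and $\alpha_{2,i}$ of Figure \ref{r05} sit relative to the circles $\alpha_1,\alpha_2,\ldots$ and $\eps_{i-1},\eps_i$, and producing elements of $G$ that carry one to the other.

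First I would handle the base case $i$ such that $\eps_{i-1}=\eps_0=\alpha_1$ (equivalently, identify $e_0=a_1$), and recall from the figures that $\alpha_{1,i}$ is obtained from $\alpha_1$ by ``pushing'' it across the $i$-th boundary/puncture region bounded by $\eps_{i-1}$ and $\eps_i$, while $\alpha_{2,i}$ is the analogous push of $\alpha_2$. The key geometric observation is that the twist $e_i$ (about $\eps_i$) together with $e_{i-1}$ realizes exactly such a push on the relevant subsurface, so that a short word such as $e_i e_{i-1}^{-1}$ (or a small variant) conjugates $a_1$ to $a_{1,i}^{\pm1}$; here I may first need $a_1$ itself inside $G$, which follows since $a_1$ is conjugate to $a_2$ via the crosscap transpositions $u_1,u_2$ (indeed $u_2 u_1 a_2 u_1^{-1}u_2^{-1}$ or a similar word equals $a_1^{\pm1}$, because $u_1$ swaps crosscaps $\mu_1,\mu_2$ and hence moves $\alpha_1$ appropriately). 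Once $a_1\in G$, conjugating by the appropriate word in $e_{i-1},e_i$ gives $a_{1,i}\in G$, and then conjugating $a_2$ (already a generator) by the same word — or by $u_1 u_2$ composed with it — gives $a_{2,i}\in G$.

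The step I expect to be the main obstacle is pinning down the \emph{exact} words: one must verify that the chosen product of $u_1,u_2,a_2,e_{i-1},e_i$ sends $\alpha_{1,i}$ (resp. $\alpha_{2,i}$) to $\alpha_1$ (resp. $\alpha_2$) as an isotopy class of circles, not merely up to homology, and with the correct orientation bookkeeping so that the resulting conjugate is the positive twist $a_{1,i}$ rather than its inverse (which is harmless here since $G$ is a group, but still must be checked to be the right circle). This is a ``follow the picture'' verification using Figures \ref{r01}, \ref{r04} and \ref{r05}: one isotopes the boundary/puncture across the crosscaps and checks that the image curve matches the drawn $\alpha_{j,i}$. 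A clean way to organize it is to induct on $i$, showing $a_{j,i}$ is obtained from $a_{j,i-1}$ by conjugation by a single twist about $\eps_{i-1}$ or $\eps_i$, with the base case $a_{j,0}=a_j$ handled via $u_1,u_2$ as above; this isolates the geometric content into one elementary move repeated $i$ times.

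Finally I would note that only $g\geq 3$ is used, and only mildly: it guarantees that crosscaps $\mu_1,\mu_2,\mu_3$ exist so that the circles $\alpha_1,\alpha_2$ and the transpositions $u_1,u_2$ are defined and the subsurface supporting the push is embedded as in Figure \ref{r05}; no relation specific to larger genus is invoked, so the argument is uniform in $g,s,n$.
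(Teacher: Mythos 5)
Your overall framework (realize $a_{1,i}$ and $a_{2,i}$ as conjugates of twists already known to lie in $G$, via $h t_c h^{-1}=t_{h(c)}^{\pm1}$) is also the paper's, and your reduction of $a_{1,i}$ to $a_{2,i}$ by conjugating with $u_2u_1$ is exactly the paper's second step. The gap is in the first step, and it is not merely a matter of ``pinning down the exact words'': the candidates you propose provably cannot work. The formulas of Section \ref{sec:action} show that $\psi(e_j)$ fixes the class $[\alpha_1]=\gamma_1+\gamma_2$, so every word in $e_{i-1}^{\pm1},e_i^{\pm1}$ fixes $[\alpha_1]$; on the other hand $[\alpha_{1,i}]=\gamma_1+\gamma_2+\delta_i$ (visible from the lantern configurations used in Lemmas \ref{lem:di:5} and \ref{lem:ds}), and $\gamma_1+\gamma_2+\delta_i\neq\pm(\gamma_1+\gamma_2)$ for all $g\geq 3$ and all $i$. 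Hence no word in $e_{i-1},e_i$ carries $\alpha_1$ to $\alpha_{1,i}$, so ``$e_ie_{i-1}^{-1}$ or a small variant'' cannot conjugate $a_1$ to $a_{1,i}^{\pm1}$. The same homology count rules out your inductive variant (a single twist about $\eps_{i-1}$ or $\eps_i$ fixes $[\alpha_{1,i-1}]$ and so cannot send $\alpha_{1,i-1}$ to $\alpha_{1,i}$); that induction is in any case ill-posed, because the subgroup $G$ depends on $i$ and does not contain $e_{i-2},\dots,e_1$, so membership of $a_{j,i-1}$ in the group generated by $\{u_1,u_2,a_2,e_{i-2},e_{i-1}\}$ gives no information about membership in $G$.

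The idea you are missing is that the twist to be conjugated is neither $a_1$ nor $a_2$ but the generator $e_{i-1}$ itself, and that the conjugating word must involve $a_2$: the paper checks that $\alpha_{2,i}=e_ia_2(\eps_{i-1})$, whence $a_{2,i}=e_ia_2\,e_{i-1}\,a_2^{-1}e_i^{-1}\in G$, and then $a_{1,i}=u_2u_1\,a_{2,i}\,u_1^{-1}u_2^{-1}\in G$. (This is consistent with the homology count above: $\psi(e_ia_2)$ sends $[\eps_{i-1}]=\gamma_1+\gamma_2+\delta_1+\dots+\delta_{i-1}$ to $\pm(\gamma_2+\gamma_3+\delta_i)=\pm[\alpha_{2,i}]$.) Without this identity, or an equivalent one producing $\alpha_{1,i}$ or $\alpha_{2,i}$ as the image of $\eps_{i-1}$ rather than of $\alpha_1$ or $\alpha_2$, your argument does not close.
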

\begin{proof}
 It is straightforward to check that
 \[\begin{aligned}
   \alpha_{2,i}&=e_i a_2(\eps_{i-1}),\\
   \alpha_{1,i}&=u_2u_1(\alpha_{2,i}).
  \end{aligned}
\]
Hence, 
\[\begin{aligned}
  a_{2,i}&=e_ia_2e_{i-1}a_2^{-1}e_{i}^{-1}\in G,\\
  a_{1,i}&=u_2u_1 a_{2,i}u_1^{-1}u_2^{-1}\in G.
  \end{aligned}
  \]
\end{proof}
\begin{lem}\label{lem:di:5}
 Let $g\geq 5$ and $1\leq i\leq s$, then $d_i$ is in the subgroup $G\leq {\Cal{M}}(N_{g,s}^n)$ generated by 
 \[\left\{u_1,u_2,u_3,u_4,a_1,a_2,a_3,a_4,e_{i-1},e_{i},b_1\right\}.\]
\end{lem}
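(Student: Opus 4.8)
The plan is to mimic the strategy of Lemma \ref{lem:a:1i}: exhibit the curve $\delta_i$ as the image of one of the circles already appearing in the generating list under a product of the given generators, and then conjugate the corresponding Dehn twist. First I would look at the subsurface of $N_{g,s}^n$ bounded by $\delta_i$ and containing the crosscaps $\mu_1,\ldots,\mu_4$ together with the boundary component $\delta_i$ and the $i$-th puncture/boundary datum; on a non--orientable surface of genus $\geq 5$ this subsurface has genus $4$, and the crucial point is that its mapping class group contains a chain relation (a lantern--type relation, or the classical expression of a boundary twist through interior twists) which already lives inside the chosen generating set $\{u_1,u_2,u_3,u_4,a_1,a_2,a_3,a_4,e_{i-1},e_i,b_1\}$. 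Concretely, I expect that the twist $\widehat d_i$ about $\widehat\delta_i$ (from Figure \ref{r05}) can be written using $a_1,a_2,a_3,a_4,b_1$ via the standard genus--$4$ chain relation, and that $u_1,u_2,u_3,u_4$ serve to move the crosscaps into the position needed for that relation to apply.

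The second step is to connect $\widehat\delta_i$ to $\delta_i$ itself. Here I would use the twists $e_{i-1}$ and $e_i$ (about $\eps_{i-1}$ and $\eps_i$), exactly as in Lemma \ref{lem:a:1i}, to slide the configuration past the $i$-th marked point or boundary: I expect a relation of the shape $\widehat\delta_i = e_i e_{i-1}^{-1}(\delta_i)$ or a small variant, so that $d_i = (e_ie_{i-1}^{-1})\,\widehat d_i\,(e_ie_{i-1}^{-1})^{-1}$, or conversely $\widehat d_i$ expressed through $d_i$ and the $e$'s. Combining this with the first step gives $d_i$ as a word in the listed generators, hence $d_i\in G$.

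The main obstacle I anticipate is the bookkeeping in the first step: verifying that the genus--$4$ chain/lantern relation on the relevant subsurface really does produce the boundary twist $\widehat d_i$ using \emph{only} $a_1,\ldots,a_4,b_1$ and conjugates of them by $u_1,\ldots,u_4$ — in particular that no further twist (such as a $\kre b_0$ or an $a_5$) sneaks in, which is exactly why the hypothesis $g\geq 5$ rather than $g\geq 4$ is needed (one needs enough ambient genus for $\widehat\delta_i$ to be non--separating, or for the chain to close up correctly on the crosscap side). I would carry this out by writing the explicit images of the circles $\alpha_{j,i}$, $\widehat\beta_1$, $\widehat\delta_i$ under the candidate word, as in the computation $\alpha_{2,i}=e_ia_2(\eps_{i-1})$ above, and then reading off the conjugation formula for the twists. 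Everything else — the passage from curves to Dehn twists via the relation $t_{f(\gamma)} = f t_\gamma f^{-1}$, and the final assembly — is routine.
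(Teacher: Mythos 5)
There is a genuine gap, and it sits in your second step. The circle $\delta_i$ is a \emph{boundary component} of $N_{g,s}^n$, and every element of ${\Cal{M}}(N_{g,s}^n)$ is the identity on the boundary; hence $f(\delta_i)=\delta_i$ for every mapping class $f$, and in particular no product of the given generators can carry $\delta_i$ to the interior circle $\widehat{\delta}_i$ (nor carry any interior circle to $\delta_i$). So a relation of the shape $\widehat{\delta}_i=e_ie_{i-1}^{-1}(\delta_i)$ is impossible, and $d_i$ can never be obtained as a conjugate $f\,t_\gamma\,f^{-1}=t_{f(\gamma)}$ of a twist about a curve not isotopic to $\delta_i$. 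A boundary twist has to be produced by a relation in which it occurs as a \emph{factor}, not by the conjugation trick of Lemma \ref{lem:a:1i}. Your first step is also off target: the paper never expresses $\widehat{d}_i$ through a chain relation here (the curves $\widehat{\delta}_k$ only enter in Lemma \ref{lem:ds}, where one additionally has all of $e_1,\ldots,e_{s+n-1}$ at one's disposal; with only $e_{i-1},e_i$ available you have no access to the curves surrounding the holes $1,\ldots,i-2$ that $\widehat{\delta}_i$ encloses).

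The missing idea is the specific lantern relation of Lemma 6.12 of \cite{Stukow_SurBg},
\[d_ia_1a_3\widehat{b}_{1,i}=a_{3,i}a_{1,i}\widehat{b}_1,\]
coming from a four--holed sphere with boundary $\delta_i,\alpha_1,\alpha_3,\widehat{\beta}_{1,i}$ and interior curves $\alpha_{1,i},\alpha_{3,i},\widehat{\beta}_1$; this is the "expression of a boundary twist through interior twists" you allude to, but applied directly to $d_i$ rather than to $\widehat{d}_i$. Once one checks that the remaining six twists lie in $G$ -- namely $a_{1,i},a_{2,i}\in G$ by Lemma \ref{lem:a:1i}, then $\alpha_{3,i}=u_2^{-1}u_3^{-1}(\alpha_{2,i})$, $\widehat{\beta}_1=u_3^{-1}u_1(\beta_1)$ and $\widehat{\beta}_{1,i}=a_4a_{4,i}^{-1}(\widehat{\beta}_1)$, so the corresponding twists are conjugates of elements of $G$ by elements of $G$ -- the relation yields $d_i\in G$. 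Your proposal, as written, does not reach this configuration and cannot be repaired without replacing the conjugation step by such a relation.
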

\begin{proof}
 By Lemma \ref{lem:a:1i}, $a_{1,i},a_{2,i}\in G$. Moreover,
\[\begin{aligned}
   \alpha_{3,i}&=u_2^{-1}u_3^{-1}(\alpha_{2,i}),\\
   \alpha_{4,i}&=u_3^{-1}u_4^{-1}(\alpha_{3,i}),\\
   \widehat{\beta}_1&=u_3^{-1}u_1(\beta_1),\\
   \widehat{\beta}_{1,i}&=a_4a_{4,i}^{-1}(\widehat{\beta}_1).
  \end{aligned}
\]
This proves that 
\[a_{3,i},\widehat{b}_1,\widehat{b}_{1,i}\in G.\]
Moreover, by Lemma 6.12 of \cite{Stukow_SurBg}, there is a lantern relation
\[d_ia_1a_3\widehat{b}_{1,i}=a_{3,i}a_{1,i}\widehat{b}_1.\]
This proves that $d_i\in G$.
\end{proof}
\begin{lem}\label{lem:ds}
 Let $g\geq 3$ and $s>0$, then  $d_s$ is in the subgroup $G\leq {\Cal{M}}(N_{g,s}^n)$ generated by 
 \[\left\{u_1,\ldots,u_{g-1},a_1,\ldots,a_{g-1},e_1,\ldots,e_{s+n-1},d_1,\ldots,d_{s-1}\right\}.\]
\end{lem}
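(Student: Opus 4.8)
The plan is to reduce the claim about $d_s$ to Lemma \ref{lem:di:5} (or a lantern relation of the same type) together with the crosscap transpositions, at the cost of raising the genus. Concretely, I would first observe that the statement is essentially known for large genus: when $g\geq 5$, Lemma \ref{lem:di:5} already gives $d_i\in\gen{u_1,\ldots,u_4,a_1,\ldots,a_4,e_{i-1},e_i,b_1}$ for every $1\leq i\leq s$, and in particular for $i=s$; moreover $b_1$ itself lies in the group generated by the $u_j$'s and $a_j$'s via the chain relation expressing $b_1$ in terms of $a_1,\ldots,a_5$ (or $a_1,\ldots,a_4$ and a crosscap slide), so for $g\geq 5$ the listed generating set already contains everything needed. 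Hence the real content is the low--genus cases $g=3$ and $g=4$, where we cannot invoke $b_1$ and have fewer crosscaps to play with.

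For $g=3$ and $g=4$ I would run the same lantern--relation strategy as in Lemma \ref{lem:di:5}, but relocate the lantern so that its four boundary/interior curves all live in the part of the surface that is visible to the available twists. The key geometric input is the lantern relation of Lemma 6.12 of \cite{Stukow_SurBg}, $d_s a_1 a_3 \widehat b_{1,s}=a_{3,s}a_{1,s}\widehat b_1$, which a priori needs an $\alpha_3$--type curve and a $\widehat\beta_1$--type curve. When $g=4$ these are available ($a_3$ exists, and $\widehat\beta_1$ can be built from $\beta_1$, which for $g=4$ is one of the curves we are allowed to use since $\lfloor (g-2)/2\rfloor=1$). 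When $g=3$ there is no $\beta_1$, but here $s$ may be assumed $\geq 1$ and the relevant lantern can be pushed into a genus--$3$ subsurface containing the boundary component $\delta_s$ together with two boundary components $\delta_{i}$, $i<s$, or $\delta_s$ and two of the $\eps$--curves; the resulting lantern expresses $d_s$ in terms of Dehn twists about curves each of which is a product of the $u_j$'s, $a_j$'s, $e_j$'s conjugates of $d_1,\ldots,d_{s-1}$. Throughout, Lemma \ref{lem:a:1i} is used to bring the $a_{j,i}$--type twists inside $G$.

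The bookkeeping I expect to be the main obstacle is the $g=3$ case: with only two crosscaps transpositions $u_1,u_2$ and twists $a_1,a_2$ one has very little room to manœuvre the curves of the lantern, so I would need to exhibit explicit curves $\alpha_{j,i}$, $\widehat\delta_i$ realised as images of the standard curves under words in $u_1,u_2,a_1,a_2,e_1,\ldots,e_{s+n-1}$ and $d_1,\ldots,d_{s-1}$, exactly as in the displayed computations of Lemmas \ref{lem:a:1i} and \ref{lem:di:5}. I would structure the proof as: (i) dispose of $g\geq 5$ by citing Lemma \ref{lem:di:5} and the chain relation for $b_1$; (ii) handle $g=4$ by the lantern relation with $\beta_1$ playing the role of the missing fourth crosscap; (iii) handle $g=3$ by moving the lantern to a subsurface bounded by $\delta_s$ and two further two--sided curves among $\delta_1,\ldots,\delta_{s-1},\eps_1,\ldots,\eps_{s+n-1}$, using $d_1,\ldots,d_{s-1}$ to absorb the extra twists. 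The verification that each auxiliary curve is the image of a standard curve under the allowed generators is routine but must be done carefully, and is where essentially all the work lies.
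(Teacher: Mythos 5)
Your outline has two genuine gaps. First, the reduction of the case $g\geq 5$ to Lemma \ref{lem:di:5} hinges on the claim that $b_1$ lies in the subgroup generated by the $a_j$'s and $u_j$'s. This is not justified: the chain relation associated with $\alpha_1,\alpha_2,\alpha_3$ expresses $(a_1a_2a_3)^4$ as a \emph{product} of the twists about the two boundary curves of a regular neighbourhood of the chain, so it yields a relation involving $b_1$ together with a second twist, not a formula for $b_1$ itself. Indeed, the paper deliberately keeps $b_1$ as an extra generator in Theorem \ref{tw:gen:pure:plus} for every $g\geq 4$; if $b_1$ were a word in $a_1,\dotsc,a_{g-1},u_1$, the smaller generating set would be used throughout. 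So you cannot discharge $g\geq 5$ this way, and in any case the proof you would need for $g=3,4$ works just as well for all $g$, making the case division superfluous.

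Second, and more importantly, for $g=3,4$ (which you correctly identify as the heart of the matter) the proposal stops at ``relocate the lantern'': every lantern whose outer boundary involves $\delta_s$ necessarily contains a twist about some other separating curve (a $\widehat{\delta}$-type curve encircling several boundary components and punctures), and you give no mechanism for expressing \emph{that} twist in $G$ --- naively one falls into an infinite regress of auxiliary separating curves. The paper's proof supplies exactly this missing mechanism, uniformly in $g\geq 3$: setting $H=\gen{u_j,a_j,e_j}$, one proves by induction, via the telescoping lantern relations $a_1e_{k+1}\widehat{d}_k d_{k+1}=e_k a_{1,k+1}\widehat{d}_{k+1}$, that $d_1\cdots d_k\widehat{d}_k^{-1}\in H$ for all $k\leq s+n$, and then closes the induction with the relation $\widehat{d}_{s+n}=(u_1\cdots u_{g-1})^{g}$, which expresses the twist about the curve bounding a neighbourhood of all $g$ crosscaps purely in terms of crosscap transpositions. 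This terminal relation is the key idea absent from your sketch; without it (or a substitute) the lantern bookkeeping cannot terminate. Note also that when $s=1$ there are no twists $d_1,\dotsc,d_{s-1}$ available to ``absorb the extra twists,'' so your step (iii) as stated does not even cover that case.
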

\begin{proof}
Let $H\leq G$ be the subgroup of $G$ generated by
\[\left\{u_1,\ldots,u_{g-1},a_1,\ldots,a_{g-1},e_1,\ldots,e_{s+n-1}\right\}\]
and let $d_{s+1}=d_{s+2}=\ldots=d_{s+n}=1$.

Note first that,
\[
 \eps_{s+n}=a_2a_3\ldots a_{g-1}u_{g-1}\dots u_3u_{2}(\alpha_1),
\]
and 
\begin{equation}\label{di:3:esn}
 e_{s+n}=(a_2\ldots a_{g-1}u_{g-1}\dots u_{2})a_1^{-1}(a_2\ldots a_{g-1}u_{g-1}\dots u_{2})^{-1} \in H.
\end{equation}
 We will prove by induction, that for each $k=1,2,\ldots,s+n$,
 \begin{equation}\label{di:3:ind}
  d_1d_2\ldots d_{k}\left(\widehat{d}_{k}\right)^{-1}\in H.
 \end{equation}
 Since $\widehat{d}_1=d_1$, the statement is true for $k=1$.
 
 It is straightforward to check that, for each $k=1,2,\ldots,s+n$ there is a lantern relation
 \begin{equation}\label{di:3:lantern}
  a_1e_{k+1}\widehat{d}_kd_{k+1}=e_ka_{1,k+1}\widehat{d}_{k+1}.
 \end{equation}
 Assume that 
 \[\widehat{d}_k=hd_1d_2\ldots d_k \]
 for some $h\in H$. Then,  by the formulas \eqref{di:3:esn}, \eqref{di:3:lantern} and Lemma \ref{lem:a:1i}, we have
 \[\begin{aligned}
  \widehat{d}_kd_{k+1}\widehat{d}_{k+1}^{-1}&=e_{k+1}^{-1}a_1^{-1}e_ka_{1,k+1},\\
  hd_1d_2\ldots d_k d_{k+1}\widehat{d}_{k+1}^{-1}&=e_{k+1}^{-1}a_1^{-1}e_ka_{1,k+1},\\
  d_1d_2\ldots d_k d_{k+1}\widehat{d}_{k+1}^{-1}&=h^{-1}e_{k+1}^{-1}a_1^{-1}e_ka_{1,k+1}\in H.
 \end{aligned}\]
 This completes the inductive proof of \eqref{di:3:ind}. In particular,
 \[d_1d_2\ldots d_s=d_1d_2\ldots d_{s+n}=h\widehat{d}_{s+n},\]
 for some $h\in H$. Moreover, it is straightforward to check that 
 \[\widehat{d}_{s+n}=(u_1u_2\ldots u_{g-1})^{g}\in G,\]
 hence
 \[d_s=d_{s-1}^{-1}\ldots d_2^{-1}d_1^{-1}h\widehat{d}_{s+n}\in G.\]
\end{proof}
\begin{tw}\label{tw:gen:pure:plus}
Let $g\geq 3$.
 Then the mapping class group ${\Cal{PM}}^+(N_{g,s}^n)={\Cal{PM}}^n(N_{g,s}^n)$ is generated by
 \[\{a_1,\ldots,a_{g-1},u_{1},e_1,\ldots,e_{s+n-1}\}\]
 and additionally 
 \[\begin{cases}
   \{d_1,\ldots,d_{s-1}\}&\text{if $g=3$,}\\
   \{b_1,d_1,\ldots,d_{s-1}\}&\text{if $g=4$,}\\
   \{b_1\}&\text{if $g\geq 5$.}
   \end{cases}
\]
\end{tw}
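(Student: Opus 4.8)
The plan is to start from a known finite generating set for $\Cal{PM}^+(N_{g,s}^n)$ and to discard redundant families of generators one at a time until only the asserted list survives. As a consequence of the descriptions of generators for mapping class groups of non--orientable surfaces \cite{Kork-non,SzepParis,Stukow_SurBg}, together with the Birman exact sequences for capping boundary circles and for forgetting punctures, $\Cal{M}(N_{g,s}^n)$ is generated by the Dehn twists $a_1,\ldots,a_{g-1}$, $b_1,\ldots,b_{\left\lfloor\frac{g-2}{2}\right\rfloor}$, $\kre{b}_0,\ldots,\kre{b}_{\left\lfloor\frac{g-2}{2}\right\rfloor}$, $d_1,\ldots,d_s$, $e_1,\ldots,e_{s+n}$, the crosscap transpositions $u_1,\ldots,u_{g-1}$, the elementary braids $s_1,\ldots,s_{n-1}$ and the puncture slides $v_1,\ldots,v_n$. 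Since the braids $s_i$ do not fix $\Sigma$ pointwise and the puncture slides $v_i$ reverse the local orientation around a puncture, neither lies in $\Cal{PM}^+(N_{g,s}^n)$; all the listed Dehn twists and all the $u_i$ do lie in $\Cal{PM}^+(N_{g,s}^n)$, and -- again via the Birman sequences -- they generate it. This reduction to an \emph{orientation--preserving, pure} starting generating set is the one place where I would invoke the exact sequences directly, or quote the corresponding statement from the literature.

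The first pruning removes every crosscap transposition except $u_1$: by the standard relations between the $u_i$ and the $a_j$ (equivalently, by the relations of the Paris--Szepietowski presentation), each $u_i$ with $i\geq2$ is a conjugate of $u_1$ by a word in $a_1,\ldots,a_{g-1}$, so $u_2,\ldots,u_{g-1}\in\gen{a_1,\ldots,a_{g-1},u_1}$. The second pruning removes the $\beta$--twists. For $g\geq4$ the circles $\beta_2,\ldots,\beta_{\left\lfloor\frac{g-2}{2}\right\rfloor}$ and $\kre{\beta}_0,\ldots,\kre{\beta}_{\left\lfloor\frac{g-2}{2}\right\rfloor}$ are obtained from $\beta_1$ by applying suitable products of the homeomorphisms $a_j$ and $u_i$, so the associated Dehn twists lie in $\gen{a_1,\ldots,a_{g-1},u_1,b_1}$. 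For $g=3$ there is no $\beta_1$, and the single twist $\kre{b}_0$ is eliminated by a lantern relation of the same type as \eqref{di:3:lantern}, expressing $\kre{b}_0$ through $a_1,a_2,u_1$ and some of the $e_i$. After these two steps the generating set is $\{a_1,\ldots,a_{g-1},u_1,e_1,\ldots,e_{s+n},d_1,\ldots,d_s\}$, with $b_1$ adjoined when $g\geq4$.

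It remains to trim the twists $e_i$ and $d_i$. The twist $e_{s+n}$ is redundant by formula \eqref{di:3:esn} in the proof of Lemma~\ref{lem:ds}, which realises it as a word in $u_1,\ldots,u_{g-1},a_1,\ldots,a_{g-1}$ and $e_1,\ldots,e_{s+n-1}$; together with the first pruning this places $e_{s+n}$ in $\gen{a_1,\ldots,a_{g-1},u_1,e_1,\ldots,e_{s+n-1}}$. If $s>0$, Lemma~\ref{lem:ds} gives $d_s\in\gen{u_1,\ldots,u_{g-1},a_1,\ldots,a_{g-1},e_1,\ldots,e_{s+n-1},d_1,\ldots,d_{s-1}}$, so $d_s$ may be dropped. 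Finally, for $g\geq5$, Lemma~\ref{lem:di:5} -- after substituting the expressions for $u_2,u_3,u_4$ from the first pruning -- shows that each $d_i$ with $1\leq i\leq s-1$ already lies in $\gen{a_1,\ldots,a_{g-1},u_1,b_1,e_1,\ldots,e_{s+n-1}}$, so no boundary twists survive; for $g\in\{3,4\}$ the hypothesis $g\geq5$ of Lemma~\ref{lem:di:5} fails and $d_1,\ldots,d_{s-1}$ are kept. Collecting the three genus ranges $g=3$, $g=4$ and $g\geq5$ gives exactly the generating set in the statement.

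The step I expect to be the main obstacle is the handling of the $\beta$-- and $\kre{\beta}$--twists together with the crosscap transpositions: one must exhibit explicit words in the $a_j$ and $u_i$ (and, for $g=3$, an explicit lantern relation) realising the reductions, and one must keep careful track of the index ranges in the three cases -- in particular, checking that for $g\geq5$ every boundary twist is eliminated while the non--boundary hypotheses of Lemmas~\ref{lem:a:1i} and~\ref{lem:di:5} are respected, and that for $g\in\{3,4\}$ precisely $d_1,\ldots,d_{s-1}$ (together with $b_1$ when $g=4$) remain.
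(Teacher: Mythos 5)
Your overall strategy --- start from a known finite generating set and prune --- is the same as the paper's, and your endgame (eliminating the extra $u_i$ by conjugation, $e_{s+n}$ via \eqref{di:3:esn}, $d_s$ via Lemma~\ref{lem:ds}, and all boundary twists via Lemma~\ref{lem:di:5} when $g\geq 5$) matches the paper's proof. However, the step where you dispose of the $\beta$--twists is wrong as stated. You claim that for $g\geq 4$ the circles $\beta_2,\ldots,\beta_{\left\lfloor\frac{g-2}{2}\right\rfloor}$ and $\kre{\beta}_0,\ldots,\kre{\beta}_{\left\lfloor\frac{g-2}{2}\right\rfloor}$ are images of $\beta_1$ under suitable products of the $a_j$ and $u_i$. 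This cannot be true even at the level of the full mapping class group: $\beta_i$ bounds a subsurface containing exactly $2i+2$ crosscaps and no punctures or boundary, so the complements of $\beta_1$ and $\beta_2$ (and likewise of $\beta_1$ and $\kre{\beta}_0$) are not homeomorphic, and no mapping class carries one of these curves to the other. Eliminating $b_i$ for $i\geq 2$ genuinely requires a relation that is not a conjugation; the paper invokes Theorem~3.1 of \cite{SzepParis}, which expresses $b_{i+1}$ as $(b_{i-1}a_{2i}a_{2i+1}a_{2i+2}a_{2i+3}b_i)^5(b_{i-1}a_{2i}a_{2i+1}a_{2i+2}a_{2i+3})^{-6}$, and then handles each $\kre{b}_i$ by conjugating $b_i$ --- not $b_1$ --- by the explicit element $a_{2i+2}\cdots a_{g-1}u_{g-1}\cdots u_{2i+2}$. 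In particular $\kre{b}_0$ is a conjugate of $a_1$ for every $g\geq 3$, so your proposed lantern relation for $\kre{b}_0$ in the case $g=3$ is both unsubstantiated and unnecessary.

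A second, smaller issue is the starting point. Deducing a generating set for ${\Cal{PM}}^+(N_{g,s}^n)$ by taking those generators of ${\Cal{M}}(N_{g,s}^n)$ that happen to lie in the subgroup is not valid in general, and the Birman exact sequences alone do not deliver it; one must quote an actual generating theorem for the pure, orientation--preserving group. The paper uses Theorem~4.1 of \cite{Stukow_HomTw}, which states that ${\Cal{PM}}^+(N_{g,s}^n)$ is generated by the crosscap slide $y=a_1u_1$ together with the twists $a_j$, $b_i$, $\kre{b}_i$, $e_1,\ldots,e_{s+n-1}$ and $d_1,\ldots,d_s$; note that this set already omits $e_{s+n}$, so your appeal to \eqref{di:3:esn} is then unnecessary (though harmless). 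With the correct starting set and the Paris--Szepietowski relation for the $b_i$, the rest of your reduction goes through as in the paper.
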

\begin{proof}
Let $G$ be the subgroup of ${\Cal{PM}}^+(N_{g,s}^n)$ generated by elements specified in the statement of the theorem. By Theorem 4.1 of \cite{Stukow_HomTw}, ${\Cal{PM}}^+(N_{g,s}^n)$ is generated by the crosscap slide $y=a_{1}u_{1}$ and $2g+n+2s-4$ twists:
 \[\left\{a_1,\ldots,a_{g-1},b_1,\ldots,b_{\left\lfloor\frac{g-2}{2}\right\rfloor},
 \kre{b}_0,\ldots,\kre{b}_{\left\lfloor\frac{g-2}{2}\right\rfloor},
 e_1,\ldots,e_{s+n-1},d_1,\ldots,d_s\right\}\]
 (note that $\kre{b}_{\left\lfloor\frac{g-2}{2}\right\rfloor}=b_{\left\lfloor\frac{g-2}{2}\right\rfloor}^{-1}$ if $g$ is even). It is enough to show that all these generators are in $G$.
 
 By Theorem 3.1 of \cite{SzepParis}, 
 \[\begin{aligned}
 b_{i+1}=(b_{i-1}a_{2i}a_{2i+1}a_{2i+2}a_{2i+3}b_i)^5&(b_{i-1}a_{2i}a_{2i+1}a_{2i+2}a_{2i+3})^{-6},\\
 &\text{for $i=1,\ldots,{\left\lfloor\frac{g-2}{2}\right\rfloor}-1$ and $b_0=a_0$}.
   \end{aligned}\]
   Hence $b_i\in G$, for $i=2,\ldots,{\left\lfloor\frac{g-2}{2}\right\rfloor}$.
 
 By Lemma 3.8 of \cite{SzepParis}, 
 \[u_{i+1}=a_{i}a_{i+1}u_{i}^{-1}a_{i+1}^{-1}a_{i}^{-1},\text{ for $i=1,\ldots,g-2$},\]
 hence $u_2,u_3,\ldots,u_{g-1}\in G$. Now it is straightforward to check, that 
 \[a_{2i+2}\cdots a_{g-2}a_{g-1}u_{g-1}\cdots u_{2i+3}u_{2i+2}(\beta_i)=\kre{\beta}_i,
 \text{ for $i=0,\ldots,{\left\lfloor\frac{g-2}{2}\right\rfloor}$}.\]
This shows, that $\kre{b}_i$ is conjugate to $b_i$ by an element of $G$. Hence $\kre{b}_i\in G$, for $i=0,\ldots,{\left\lfloor\frac{g-2}{2}\right\rfloor}$. This together with Lemma \ref{lem:ds}  complete the proof if $g<5$.

Finally, if $g\geq 5$, then Lemma \ref{lem:di:5} implies that $d_i\in G$, for $i=1,\ldots,s$. 
\end{proof}
\begin{tw} \label{tw:gen:pure}
 Let $g\geq 3$ and $0\leq k\leq n$. 
 Then the mapping class group ${\Cal{PM}}^k(N_{g,s}^n)$ is generated by ${\Cal{PM}}^+(N_{g,s}^n)$ and $(n-k)$ puncture slides
 \[\{v_{k+1},\ldots,v_n\}.\]
\end{tw}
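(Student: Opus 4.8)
The plan is to realise ${\Cal{PM}}^+(N_{g,s}^n)$ as the kernel of a surjective homomorphism onto $\zz_2^{n-k}$ and to check that the puncture slides $v_{k+1},\ldots,v_n$ are mapped onto a basis of this group. The theorem will then follow from the elementary fact that if $K$ is a normal subgroup of a group $G$ and $g_1,\ldots,g_m\in G$ have classes generating $G/K$, then $G=\gen{K,g_1,\ldots,g_m}$.

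First I would construct the homomorphism. Every $h\in{\Cal{PM}}^k(N_{g,s}^n)$ fixes each puncture $z_j$, hence acts on a small disc around $z_j$; let $\theta_j(h)\in\zz_2$ be $0$ if this action preserves orientation and $1$ otherwise. Since the sign of a composite linear automorphism of $T_{z_j}N$ is the product of the signs, each $\theta_j\colon{\Cal{PM}}^k(N_{g,s}^n)\to\zz_2$ is a homomorphism, and by the definition of ${\Cal{PM}}^k$ we have $\theta_1=\cdots=\theta_k=0$. Put $\theta=(\theta_{k+1},\ldots,\theta_n)\colon{\Cal{PM}}^k(N_{g,s}^n)\to\zz_2^{n-k}$. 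Straight from the definitions, an element of ${\Cal{PM}}^k(N_{g,s}^n)$ lies in ${\Cal{PM}}^+(N_{g,s}^n)={\Cal{PM}}^n(N_{g,s}^n)$ precisely when it preserves a local orientation around every puncture, that is $\ker\theta={\Cal{PM}}^+(N_{g,s}^n)$; in particular ${\Cal{PM}}^+(N_{g,s}^n)$ is normal in ${\Cal{PM}}^k(N_{g,s}^n)$.

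Next I would evaluate $\theta$ on the slides. For $k<j\leq n$ the puncture slide $v_j$ fixes $\Sigma$ pointwise and, being supported away from $z_1,\ldots,z_k$, fixes the local orientations there, so $v_j\in{\Cal{PM}}^k(N_{g,s}^n)$. Moreover $v_j$ is supported in a regular neighbourhood of $\nu_j\cup\{z_j\}$, which is a \Mob since $\nu_j$ passes through a crosscap (Figure \ref{r04}); therefore $v_j$ reverses a local orientation around $z_j$ and is the identity near every other puncture, so $\theta(v_j)$ is the standard basis vector of $\zz_2^{n-k}$ corresponding to $z_j$. This is consistent with the action formula \eqref{eq:vi}, in which $v_j$ negates the class $\delta_{s+j}$ of a small loop encircling $z_j$. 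Consequently $\theta$ is surjective and $\theta(v_{k+1}),\ldots,\theta(v_n)$ form a basis of $\zz_2^{n-k}$.

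Putting the pieces together, ${\Cal{PM}}^+(N_{g,s}^n)=\ker\theta$ is normal in ${\Cal{PM}}^k(N_{g,s}^n)$ and the classes of $v_{k+1},\ldots,v_n$ generate the quotient $\zz_2^{n-k}$, whence ${\Cal{PM}}^k(N_{g,s}^n)=\gen{{\Cal{PM}}^+(N_{g,s}^n),v_{k+1},\ldots,v_n}$, as asserted. The only step that is not purely formal is the computation of $\theta(v_j)$, i.e. making sure that a slide along the one-sided path $\nu_j$ reverses the local orientation exactly at $z_j$ while acting trivially near the remaining punctures; the rest is bookkeeping with the short exact sequence $1\to{\Cal{PM}}^+(N_{g,s}^n)\to{\Cal{PM}}^k(N_{g,s}^n)\to\zz_2^{n-k}\to 1$ determined by $\theta$.
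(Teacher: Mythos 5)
Your proposal is correct and follows essentially the same route as the paper: the paper's proof simply invokes the short exact sequence $1\to{\Cal{PM}}^+(N_{g,s}^n)\to{\Cal{PM}}^k(N_{g,s}^n)\to\zz_2^{n-k}\to 1$ together with the fact that the images of $v_{k+1},\ldots,v_n$ generate the quotient. You merely make explicit the construction of the quotient map via the local-orientation homomorphisms $\theta_j$ and the verification that each $v_j$ hits a basis vector, which the paper leaves implicit.
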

\begin{proof}
The statement follows from the short exact sequence 
\[\begin{CD}1@>>>{\Cal{PM}}^+(N_{g,s}^n) @>i>> {\Cal{PM}}^k(N_{g,s}^n)@>p>> \zz_2^{n-k}@>>>1\end{CD}\]
and the fact that $\left\{p(v_{k+1}),\ldots,p(v_n)\right\}$ generate $\zz_2^{n-k}$.
\end{proof}
\begin{tw}\label{tw:gen:k}
 Let $g\geq 3$ and $n\geq 2$. Then the mapping class group ${\Cal{M}}(N_{g,s}^n)$ is generated by ${\Cal{PM}}^+(N_{g,s}^n)$ and
 \[\{v_{n},s_1,\ldots,s_{n-1}\}.\]
\end{tw}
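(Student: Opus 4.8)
The plan is to deduce the statement from Theorem~\ref{tw:gen:pure} (with $k=0$) together with the exact sequence describing the action of ${\Cal{M}}(N_{g,s}^n)$ on the set of punctures, and then to trade the puncture slides $v_1,\dotsc,v_{n-1}$ for $v_n$ at the expense of the elementary braids $s_1,\dotsc,s_{n-1}$.

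First I would record the short exact sequence
\[\begin{CD}1@>>>{\Cal{PM}}(N_{g,s}^n)@>>>{\Cal{M}}(N_{g,s}^n)@>\pi>>\Sigma_n@>>>1,\end{CD}\]
where $\pi$ is the permutation representation on $\Sigma=\{z_1,\dotsc,z_n\}$, the group $\Sigma_n$ is the symmetric group on $n$ symbols, and $\ker\pi={\Cal{PM}}(N_{g,s}^n)={\Cal{PM}}^0(N_{g,s}^n)$ by definition. Since $\pi(s_i)=(i\ i{+}1)$ and these transpositions generate $\Sigma_n$, the group ${\Cal{M}}(N_{g,s}^n)$ is generated by ${\Cal{PM}}(N_{g,s}^n)$ together with $s_1,\dotsc,s_{n-1}$. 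Plugging in Theorem~\ref{tw:gen:pure} with $k=0$, that is ${\Cal{PM}}(N_{g,s}^n)=\gen{{\Cal{PM}^+}(N_{g,s}^n),v_1,\dotsc,v_n}$, it follows that ${\Cal{M}}(N_{g,s}^n)$ is generated by ${\Cal{PM}^+}(N_{g,s}^n)$ and $\{v_1,\dotsc,v_n,s_1,\dotsc,s_{n-1}\}$.

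It then remains to see that $v_1,\dotsc,v_{n-1}$ are redundant, and the key claim is that
\[s_iv_is_i^{-1}v_{i+1}^{-1}\in{\Cal{PM}^+}(N_{g,s}^n)\qquad\text{for }i=1,\dotsc,n-1.\]
To prove this I would use the elementary fact that for homeomorphisms $f,g$ of $N_{g,s}^n$ with $f(z)=z$, the conjugate $gfg^{-1}$ fixes $g(z)$ and preserves (resp.\ reverses) the local orientation there exactly when $f$ does at $z$. Apply this with $f=v_i$ and $g=s_i$: the slide $v_i$ fixes $\Sigma$ pointwise, reverses the local orientation around $z_i$, and preserves it around every other puncture (for the configuration of paths $\nu_j$ of Figure~\ref{r04}), so $s_iv_is_i^{-1}$ fixes $\Sigma$ pointwise, reverses the local orientation around $s_i(z_i)=z_{i+1}$, and preserves it around all the other punctures. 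Since $v_{i+1}$ has exactly the same effect on $\Sigma$ and on the local orientations, the product $s_iv_is_i^{-1}v_{i+1}^{-1}$ fixes $\Sigma$ pointwise and preserves every local orientation, hence lies in ${\Cal{PM}^+}(N_{g,s}^n)$.

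Granting the claim, a downward induction finishes the proof. Put $H=\gen{{\Cal{PM}^+}(N_{g,s}^n),v_n,s_1,\dotsc,s_{n-1}}$; then $v_n\in H$, and if $v_{i+1}\in H$ then $s_iv_is_i^{-1}=\bigl(s_iv_is_i^{-1}v_{i+1}^{-1}\bigr)v_{i+1}\in H$, so that $v_i=s_i^{-1}\bigl(s_iv_is_i^{-1}\bigr)s_i\in H$. Hence $v_1,\dotsc,v_n\in H$, and together with the first paragraph this gives $H={\Cal{M}}(N_{g,s}^n)$, as claimed. The one delicate point is the verification of the displayed claim, namely the bookkeeping of local orientations near the punctures: one has to confirm that $v_i$ reverses the local orientation precisely at $z_i$ (a consequence of $\nu_i$ running through a crosscap) and preserves the local orientation at the remaining punctures for the paths of Figure~\ref{r04}, and that conjugation by $s_i$ transports this picture from $z_i$ to $z_{i+1}$ without introducing any spurious orientation--reversal. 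This is routine once the curves of Figures~\ref{r03} and~\ref{r04} are fixed, so the real content of the theorem lies in the reduction of the first two paragraphs.
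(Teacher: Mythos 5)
Your proof follows essentially the same route as the paper's: the short exact sequence $1\to{\Cal{PM}}(N_{g,s}^n)\to{\Cal{M}}(N_{g,s}^n)\to S_n\to 1$ combined with Theorem~\ref{tw:gen:pure} for $k=0$, followed by trading $v_1,\ldots,v_{n-1}$ for $v_n$ via conjugation by the elementary braids. The only (minor) difference is that the paper invokes the exact relation $v_{j-1}=s_{j-1}^{-1}v_{j}s_{j-1}$, whereas you establish the weaker but sufficient claim $s_iv_is_i^{-1}v_{i+1}^{-1}\in{\Cal{PM}}^+(N_{g,s}^n)$ by tracking the permutation of the punctures and the local orientations, which avoids verifying an equality of isotopy classes from the figures.
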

\begin{proof}
By Theorem \ref{tw:gen:pure}, the pure mapping class group ${\Cal{PM}}(N_{g,s}^n)={\Cal{PM}}^0(N_{g,s}^n)$ is
generated by ${\Cal{PM}}^+(N_{g,s}^n)$ and $\{v_1,\ldots,v_n\}$. Moreover, we have the short exact sequence 
\[\begin{CD}1@>>>{\Cal{PM}}(N_{g,s}^n) @>i>> {\Cal{M}}(N_{g,s}^n)@>p>> S_n@>>>1\end{CD},\]
where $S_n$ is the symmetric group on $n$ letters. Now the statement follows from the fact that 
$p(s_1),\ldots,p(s_{n-1})$ generate $S_n$ and the relation
\[v_{i-1}=s_{j-1}^{-1}v_{j}s_{j-1},\quad \text{ for $i=2,\ldots,n$}.\]
\end{proof}
For further reference, let us prove that
\begin{prop} \label{prop:rel:sj:ej}
 Let $g\geq 3$, $n\geq 2$ and $1 \leq j\leq n-1$. Then
 \[e_{s+j-1}e_{s+j+1}s_j=e_{s+j}s_j^3e_{s+j}.\]
\end{prop}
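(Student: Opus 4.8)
The relation $e_{s+j-1}e_{s+j+1}s_j=e_{s+j}s_j^3e_{s+j}$ is a relation among Dehn twists about the curves $\eps_{s+j-1},\eps_{s+j},\eps_{s+j+1}$ and the elementary braid $s_j$ swapping the punctures $z_j,z_{j+1}$. All of these mapping classes are supported on a subsurface $\Sigma$ which is a sphere with some boundary and the two punctures $z_j,z_{j+1}$ in its interior — concretely, a four-holed sphere (or a two-holed sphere with two punctures) bounded by $\eps_{s+j-1}$, $\eps_{s+j+1}$ and the two boundary components ``behind'' the configuration, with the braid $s_j$ interchanging $z_j$ and $z_{j+1}$ inside it. The plan is therefore to reduce the identity to a known relation in the mapping class group of this small orientable subsurface, and then invoke the fact that the inclusion $\Sigma\hookrightarrow N_{g,s}^n$ induces a homomorphism on mapping class groups so that any relation valid in $\Cal{M}(\Sigma)$ pushes forward to $\Cal{M}(N_{g,s}^n)$.

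Concretely, I would first identify $\eps_{s+j}$ as the curve surrounding just the pair $\{z_j,z_{j+1}\}$ (with $\eps_{s+j-1},\eps_{s+j+1}$ the neighbouring curves of the chain in Figure~\ref{r04}), so that $s_j$ is a half-twist in a disc-with-two-punctures whose boundary is isotopic to $\eps_{s+j}$. Then $s_j^2=e_{s+j}^{-1}\cdot(\text{twist about the other boundary component of that twice-punctured disc})$ up to the usual conventions; more usefully, the braid relation in a sphere with four boundary components relating the four boundary twists to the half-twists reads exactly as a lantern-type identity. The cleanest route is: rewrite the claim as $s_j^{-1}e_{s+j-1}e_{s+j+1}s_j = s_j^2\, e_{s+j}\, s_j^3 e_{s+j}\, \cdot(\text{rearranged})$ — i.e. compute $s_j(\eps_{s+j\pm1})$ and $s_j(\eps_{s+j})$ at the level of curves (which is straightforward from the picture: $s_j$ fixes $\eps_{s+j}$ and swaps/fixes the outer curves appropriately), use that a Dehn twist about $\eps$ conjugated by $f$ is the Dehn twist about $f(\eps)$, and check the resulting equality of products of twists via the chain relation / lantern relation on the relevant four-holed sphere. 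This is precisely the kind of ``it is straightforward to check'' verification the authors use repeatedly (cf. the lantern relations \eqref{di:3:lantern} and in Lemma~\ref{lem:ds}), so I would present it in that style.

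The main obstacle is bookkeeping rather than conceptual: one must fix orientations and twist directions consistently so that the powers $s_j^3$ and the placement of $e_{s+j}$ come out exactly, and one must be careful that the two ``invisible'' boundary curves of the supporting four-holed sphere are genuinely $\eps_{s+j-1}$ and $\eps_{s+j+1}$ after isotopy (not some conjugates of them) — this depends on reading Figure~\ref{r04} correctly. A safe fallback, if the geometric identification is delicate, is to verify the relation by its action plus an abelianization/presentation argument: both sides are supported on $\Sigma$, the mapping class group of a sphere with $2$ punctures and (up to) $2$ boundary components is a well-understood (surface) braid group with an explicit presentation, and the claimed word can be checked directly there; alternatively one can check that both sides have the same image under $\psi$ (using the formulas \eqref{eq:psi}–\eqref{eq:vi} for $\psi(e_j)$ and $\psi(s_j)$) and that their ``difference'' lies in the kernel, then pin it down using the known structure of that kernel. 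I expect the direct lantern-relation verification on the four-holed sphere to be the shortest and most transparent, so that is the route I would write up.
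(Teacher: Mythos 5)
Your proposal is correct and follows essentially the same route as the paper: the authors note that $\eps_{s+j+1}$ bounds a four-holed sphere whose other boundary components are $\eps_{s+j-1}$ and the two circles around $z_j,z_{j+1}$, and apply the lantern relation $e_{s+j+1}e_{s+j-1}d_jd_{j+1}=e_{s+j}s_j^2\left(s_je_{s+j}s_j^{-1}\right)$, using that $s_j^2$ is the twist about the curve enclosing both punctures, that $s_je_{s+j}s_j^{-1}$ is the twist about $s_j(\eps_{s+j})$, and that the twists about curves encircling single punctures are trivial. The only detail to make explicit in your write-up is that the two ``invisible'' boundary curves are precisely these puncture-encircling circles, which is exactly what collapses the lantern to the stated three-term identity.
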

\begin{proof}
 It is straightforward to check that $\eps_{s+j+1}$ bounds in $N$ a disk with three holes: $\eps_{s+j-1},\delta_{j},\delta_{j+1}$. This implies that there is a lantern relation of the form
 \[\begin{aligned}
    e_{s+j+1}e_{s+j-1}d_{j}d_{j+1}&=e_{s+j}s_j^2\left(s_je_{s+j}s_j^{-1}\right),\\
    e_{s+j+1}e_{s+j-1}&=e_{s+j}s_j^3e_{s+j}s_j^{-1},\\
    e_{s+j+1}e_{s+j-1}s_j&=e_{s+j}s_j^3e_{s+j}.
   \end{aligned}
\]
\end{proof}
\section{Computing $\gen{\kre{X}}\cap \ker\kre{\partial}_1$} \label{sec:kernel}
Let $G={\Cal{M}}(N_{g,s}^n)$,  $M=H_1(N_{g,s}^n;\zz)$ and assume that $s+n>0$.  Let
\[\xi_i=\begin{cases}
         \gamma_i&\text{for $i=1,\ldots,g$,}\\
         \delta_{i-g}&\text{for $i=g+1,\ldots,g+s+n-1$.}
        \end{cases}
\]
If $h\in G$, then
\[\kre{\partial}_1([h]\otimes\xi_i)=(h-1)[\cdot]\otimes\xi_i=(\psi(h)^{-1}-I_g)\xi_i,\]
where we identified $C_0(G)\otimes M$ with $M$ by the map $[\cdot]\otimes m\mapsto m$.

Let us denote 
\[[a_j]\otimes \xi_i,\ [u_{j}]\otimes\xi_i,\ [b_1]\otimes\xi_i,\ [e_j]\otimes\xi_i,\
[d_j]\otimes\xi_i,\ [s_j]\otimes\xi_i,\ [v_j]\otimes\xi_i\] 
respectively by 
\[a_{j,i},\ u_{j,i},\ b_{1,i},\ e_{j,i},\ d_{j,i},\ s_{j,i},\ v_{j,i},\]
where $i=1,\ldots,g+s+n-1$.

Using formulas \eqref{eq:psi}--\eqref{eq:vi}, we obtain
\begin{equation}\begin{aligned}\label{eq:partial}
\kre{\partial}_1(a_{j,i})&=\begin{cases}
                           \gamma_j+\gamma_{j+1}&\text{if $i=j$,}\\
                           -\gamma_j-\gamma_{j+1}&\text{if $i=j+1$,}\\
                           0&\text{otherwise,}
                          \end{cases}
\\
\kre{\partial}_1(u_{j,i})&=\begin{cases}
                           -\gamma_j+\gamma_{j+1}&\text{if $i=j$,}\\
                           \gamma_j-\gamma_{j+1}&\text{if $i=j+1$,}\\
                           0&\text{otherwise,}
                          \end{cases}
\\
\kre{\partial}_1(b_{1,i})&=\begin{cases}
                           \gamma_1+\gamma_2+\gamma_3+\gamma_{4}&\text{if $i=1,3$,}\\
                           -\gamma_1-\gamma_2-\gamma_3-\gamma_{4}&\text{if $i=2,4$,}\\
                           0&\text{otherwise,}
                          \end{cases}
\end{aligned}\end{equation}
\begin{equation}\begin{aligned}
\kre{\partial}_1(e_{j,i})&=\begin{cases}
            \gamma_1+\gamma_2+(\delta_1+\ldots+\delta_j)&\text{if $i=1$,}\\
                           -\gamma_1-\gamma_2-(\delta_1+\ldots+\delta_j)&\text{if $i=2$,}\\
                           0&\text{otherwise,}
                          \end{cases}\\
    \kre{\partial}_1(d_{j,i})&=0,\\
    \text{if $j<n-1$, then }\kre{\partial}_1(s_{j,i})&=\begin{cases}
            -\delta_{s+j}+\delta_{s+j+1}&\text{if $i=g+s+j$,}\\
                           \delta_{s+j}-\delta_{s+j+1}&\text{if $i=g+s+j+1$,}\\
                           0&\text{otherwise,}
                          \end{cases}\\
                          \kre{\partial}_1(s_{n-1,i})&=\begin{cases}
            -(2\gamma_1+\ldots+2\gamma_g)\\
            \quad -(\delta_1+\ldots+\delta_{s+n-1})\\
            \quad -\delta_{s+n-1}&\text{if $i=g+s+n-1$,}\\
                           0&\text{otherwise,}
                          \end{cases}\\
\text{if $j<n$, then } \kre{\partial}_1(v_{j,i})&=\begin{cases}
                                                      \delta_{s+j}&\text{if $i=g$,}\\
                                                    -2\delta_{s+j}&\text{if $i=g+s+j$,}\\
                                                    0&\text{otherwise,}
                                                     \end{cases}\\
\kre{\partial}_1(v_{n,i})&=\begin{cases}
            -(2\gamma_1+\ldots+2\gamma_g)\\
            \quad -(\delta_1+\ldots+\delta_{s+n-1})&\text{if $i=g$,}\\
                           0&\text{otherwise.}
                          \end{cases}\\
                 \end{aligned}\end{equation}                
The above formulas show that all of the following elements are contained in $\ker\kre{\partial}_1$
 \begin{enumerate}
 \item[(K1)] $a_{j,i}$ for $j=1,\ldots,g-1$ and $i=1,\ldots,j-1,j+2,\ldots,g+s+n-1$,
\item[(K2)] $a_{j,j}+a_{j,j+1}$ for $j=1,\ldots,g-1$,
\item[(K3)] $u_{1,i}$ for $i=3,\ldots,g+s+n-1$,
\item[(K4)] $u_{1,1}+u_{1,2}$,
\item[(K5)] $e_{j,i}$ for $j=1,\ldots,s+n-1$ and $i=3,4,\ldots,g+s+n-1$,
\item[(K6)] $e_{j,1}+e_{j,2}$ for $j=1,\ldots,s+n-1$,
\item[(K7)] $d_{j,i}$ for $j=1,\ldots,s-1$ and $i=1,\ldots, g+s+n-1$,
\item[(K8)] $b_{1,i}$ for $i=5,\ldots,g+s+n-1$,
\item[(K9)] $b_{1,i}+b_{1,1}$ for $i=2,4$,
\item[(K10)] $b_{1,3}-b_{1,1}$,
\item[(K11)] $b_{1,1}-a_{1,1}-a_{3,3}$.
\end{enumerate}
\begin{prop}\label{prop:kernel:1}
 Let $g\geq 3$, $s+n>0$ and $G={\Cal{PM}}^+(N_{g,s}^n)$. Then $\gen{\kre{X}}\cap \ker\kre{\partial}_1$ is the abelian group generated freely by Generators (K1)--(K6) and additionally
 \[\begin{cases}
    \text{(K7)}&\text{if $g=3$,}\\
    \text{(K7), (K8)--(K11)}&\text{if $g=4$,}\\
    \text{(K8)--(K11)}&\text{if $g\geq 5$.}
   \end{cases}
\]
\end{prop}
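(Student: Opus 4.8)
The plan is to show that $\gen{\kre{X}}\cap \ker\kre{\partial}_1$ for $G={\Cal{PM}}^+(N_{g,s}^n)$ decomposes exactly as claimed, by analysing the linear map $\kre{\partial}_1$ generator-by-generator. Recall that by Theorem \ref{tw:gen:pure:plus} the group $X$ of generators of ${\Cal{PM}}^+(N_{g,s}^n)$ consists of $a_1,\dots,a_{g-1}$, $u_1$, $e_1,\dots,e_{s+n-1}$, together with $d_1,\dots,d_{s-1}$ (if $g=3$), $b_1,d_1,\dots,d_{s-1}$ (if $g=4$), or $b_1$ (if $g\geq 5$). Since $\gen{\kre{X}}$ is freely generated over $\zz$ by the symbols $[x]\otimes\xi_i$ (with $x\in X$, $1\le i\le g+s+n-1$, using that $s+n>0$ makes the $\xi_i$ a free basis of $M$), the intersection with $\ker\kre{\partial}_1$ is the kernel of the $\zz$-linear map $\kre{\partial}_1$ restricted to this free module, and a basis for such a kernel can be read off from the explicit formulas in \eqref{eq:partial}.

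First I would organise the basis symbols by their image under $\kre{\partial}_1$. The formulas show that $\kre{\partial}_1$ sends each $d_{j,i}$ to $0$, so all $d_{j,i}$ lie in the kernel outright; this accounts for (K7) whenever the $d_j$ are among the generators, i.e.\ for $g\in\{3,4\}$. For the twists $a_j$, $u_1$, $e_j$ (present for all $g$), the image of $[x]\otimes\xi_i$ is nonzero only for the one or two values of $i$ indicated, and in those cases it is $\pm$(a fixed primitive vector $w_x$); hence within the span of $\{[x]\otimes\xi_i : i\}$ for a fixed such $x$ the kernel is spanned by the vanishing symbols together with the single ``balanced'' combination ($a_{j,j}+a_{j,j+1}$, $u_{1,1}+u_{1,2}$, $e_{j,1}+e_{j,2}$), giving (K1)–(K2), (K3)–(K4), (K5)–(K6). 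The only remaining generator, appearing for $g\ge 4$, is $b_1$, whose image vector $\gamma_1+\gamma_2+\gamma_3+\gamma_4$ coincides (up to sign) with the images of $a_{1,1}$, $a_{3,3}$, etc.; this coincidence is what produces the cross-generator relations (K9)–(K11), with (K8) the vanishing symbols $b_{1,i}$, $i\ge 5$.

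The substantive step is to prove that there are no further elements of the kernel — i.e.\ that (K1)–(K11) (with the indicated restrictions by $g$) actually span the whole intersection, not just a subgroup of it. For this I would argue as follows. Write an arbitrary $z\in\gen{\kre{X}}\cap\ker\kre{\partial}_1$ in the free basis. Using the vanishing generators (K1), (K3), (K5), (K7), (K8) one may subtract off and assume $z$ is supported only on the finitely many ``interacting'' symbols: $a_{j,j},a_{j,j+1}$ ($j=1,\dots,g-1$), $u_{1,1},u_{1,2}$, $e_{j,1},e_{j,2}$ ($j=1,\dots,s+n-1$), and (for $g\ge 4$) $b_{1,1},b_{1,2},b_{1,3},b_{1,4}$. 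The equation $\kre{\partial}_1(z)=0$ then becomes a single linear relation among the vectors $\gamma_1+\gamma_2,\ \gamma_2+\gamma_3,\ \dots,\ \gamma_{g-1}+\gamma_g,\ -\gamma_1+\gamma_2,\ \gamma_1+\gamma_2+\delta_1,\ \dots,\ \gamma_1+\gamma_2+(\delta_1+\cdots+\delta_{s+n-1}),\ \gamma_1+\gamma_2+\gamma_3+\gamma_4$ in the free module $M$. Because $\gamma_1,\dots,\gamma_g,\delta_1,\dots,\delta_{s+n-1}$ are a free basis, one can solve this triangularly: the coefficient of $\delta_{s+n-1}$ forces the $e_{s+n-1}$-coefficients to balance, then $\delta_{s+n-2}$ forces the $e_{s+n-2}$ ones, and so on down to $\delta_1$; next the $\gamma_g,\gamma_{g-1},\dots$ coefficients force, successively, the $a_{g-1},a_{g-2},\dots$ coefficients to balance; the $b_1$ and $u_1$ contributions then get pinned down by the remaining $\gamma_1,\dots,\gamma_4$ coordinates, yielding exactly the relations recorded as (K2), (K4), (K6), (K9)–(K11). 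Hence $z$ is an integer combination of (K1)–(K11), and conversely all of these lie in the kernel by the displayed computation, so they form a free generating set (freeness being inherited from freeness of $\gen{\kre{X}}$, once one checks the listed elements are part of a basis — which is immediate since each introduces a new basis symbol with coefficient $\pm 1$).

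I expect the main obstacle to be the bookkeeping in this last spanning/independence argument when $g=4$: there the generator $b_1$ is present \emph{and} $s$ may be positive, so the ``interacting'' block involves the four symbols $b_{1,1},\dots,b_{1,4}$ simultaneously with $a_{1,1},a_{1,2},a_{3,3},a_{3,4}$ (all of whose $\kre{\partial}_1$-images involve $\gamma_1,\dots,\gamma_4$), and one must verify carefully that the resulting $4$-dimensional system of coordinate equations has solution space spanned precisely by (K9), (K10), (K11) together with the already-extracted (K2) — in particular that no extra combination such as $b_{1,1}-b_{1,2}$ minus a twist combination sneaks into the kernel. Once the $g=4$ case is pinned down, the cases $g=3$ (drop $b_1$, keep $d_j$) and $g\ge 5$ (keep $b_1$, drop $d_j$, with the $b_1$-block now interacting only with $a_1$ and $a_3$) follow by the same triangular solve with fewer symbols. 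A clean way to present all three cases uniformly is to exhibit an explicit change of basis of $\gen{\kre{X}}$ whose first block consists of (K1)–(K11) and whose complementary block is mapped by $\kre{\partial}_1$ injectively onto a direct summand of $M$; the proposition then follows immediately.
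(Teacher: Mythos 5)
Your proposal is correct and takes essentially the same route as the paper's proof: subtract off the listed kernel elements to reduce an arbitrary element of $\gen{\kre{X}}\cap\ker\kre{\partial}_1$ to a residual supported on finitely many symbols, then use the formulas for $\kre{\partial}_1$ together with the freeness of the basis $\gamma_1,\dots,\gamma_g,\delta_1,\dots,\delta_{s+n-1}$ of $M$ to force that residual to vanish by a triangular elimination. The paper organizes the successive subtractions so that the residual is supported only on the single symbols $a_{j,j}$, $u_{1,1}$, $e_{j,1}$ (making the final independence check immediate), whereas you solve the coordinate system on the interacting pairs directly, but the content is identical.
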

\begin{proof}
 By Theorem \ref{tw:gen:pure:plus}, $\gen{\kre{X}}$ is generated freely by $a_{j,i},u_{1,i},e_{j,i}$ and
 \[\begin{cases}
    d_{j,i}&\text{if $g=3$,}\\
    b_{1,i},d_{j,i}&\text{if $g=4$,}\\
    b_{1,i}&\text{if $g\geq 5$.}
   \end{cases}
\]
Suppose that $h\in\gen{\kre{X}}\cap\ker\kre{\partial}_1$. We will show that $h$ can be uniquely expressed as a linear combination of generators specified in the statement of the proposition.

We decompose $h$ as follows:
\begin{itemize}
 \item $h=h_0=h_1+h_2$, where $h_1$ is a combination of Generators (K1)--(K2) and $h_2$ does not contain $a_{j,i}$ with $i\neq j$;
 \item $h_2=h_3+h_4$, where $h_3$ is a combination of Generators (K3)--(K4) and $h_4$ does not contain $u_{1,i}$ with $i\neq 1$;
 \item $h_4=h_5+h_6$, where $h_5$ is a combination of Generators (K5)--(K6) and $h_6$ does not contain $e_{j,i}$ for $i>1$;
\end{itemize}
If $g=3$ or $g=4$, we decompose $h_6=h_7+h_8$, where $h_7$ is a combination of Generators (K7) and $h_8$ does not contain $d_{j,i}$. If $g\geq 5$, we define $h_7=0$ and $h_8=h_6$.

If $g\geq 4$, we decompose $h_8=h_9+h_{10}$, where $h_9$ is a combination of Generators (K8)--(K11) and $h_{10}$ does not contain $b_{1,i}$. If $g=3$ we define $h_9=0$ and $h_{10}=h_8$.

Observe also that for each $k=0,\ldots,8$, $h_{k+1}$ and $h_{k+2}$ are uniquely determined by $h_k$. Element $h_{10}$ has the form \[h_{10}=\sum_{j=1}^{g-1}\alpha_ja_{j,j}+\alpha u_{1,1}+
\sum_{j=1}^{s+n-1}\beta_j e_{j,1}
\]
for some integers $\alpha,\alpha_1,\ldots,\alpha_{g-1},\beta_1,\ldots,\beta_{s+n-1}$. Hence
\[\begin{aligned}
 0=&\kre{\partial}_1(h_{10})=\alpha_1(\gamma_1+\gamma_2)+\alpha_2(\gamma_2+\gamma_3)+\ldots+\alpha_{g-1}(\gamma_{g-1}+\gamma_g)\\
 &+\alpha(-\gamma_{1}+\gamma_2)+\beta_1(\gamma_1+\gamma_2+\delta_1)+\beta_2(\gamma_1+\gamma_2+\delta_1+\delta_2)\\
 &+\ldots+\beta_{s+n-1}(\gamma_1+\gamma_2+\delta_1+\delta_2+\ldots+\delta_{s+n-1}).
\end{aligned}\]
This implies that $\beta_{s+n-1}=\ldots=\beta_2=\beta_1=0$, and then $\alpha_{g-1}=\ldots=\alpha_2=\alpha_{1}=\alpha=0$ and thus $h_{10}=0$.
\end{proof}
By an analogous argument and Propositions \ref{tw:gen:pure}, \ref{tw:gen:k}, we get
\begin{prop}\label{prop:kernel:2}
 Let $g\geq 3$, $s+n>0$, $0\leq k\leq n$ and $G={\Cal{PM}}^k(N_{g,s}^n)$. Then $\gen{\kre{X}}\cap \ker\kre{\partial}_1$ is the abelian group generated by generators specified in the statement of Proposition \ref{prop:kernel:1} and additionally
 \begin{enumerate}
  \item [(K12)] $\widetilde{v}_{j,i}$ for $k<j\leq n$ and $1\leq i\leq g+s+n-1$,
 \end{enumerate}
 where 
 \begin{equation*} %\label{dfn:v:tild}
  \widetilde{v}_{j,i}=
  \begin{cases}
  v_{j,i}&\text{if $k<j\leq n$ and $i\not\in\{g,g+s+j\}$,}\\
  v_{j,g}+e_{s+j-1,1}-e_{s+j,1}&\text{if $k<j<n$ and $i=g$,}\\
  v_{j,g+s+j}-2e_{s+j-1,1}+2e_{s+j,1}&\text{if $k<j<n$ and $i=g+s+j$,}\\
  v_{n,g}+e_{s+n-1,1}+a_{1,1}-u_{1,1}\\
  \qquad+2a_{2,2}+\ldots+ 2a_{g-1,g-1}&\text{if $j=n$ and $g$ is odd,}\\
   v_{n,g}+e_{s+n-1,1}+a_{1,1}\\
   \qquad 2a_{3,3}+\ldots+2a_{g-1,g-1}  &\text{if $j=n$ and $g$ is even.}
%    v_{n,g}+e_{s+n-1,1}+a_{1,1}+2a_{3,3}\\
%   \qquad+\ldots+2a_{g-2,g-2}+a_{g-1,g-1}+u_{g-1,g-1}&\text{if $j=n$ and $g$ is odd}\\
%    v_{n,g}+e_{s+n-1,1}+a_{1,1}+2a_{3,3}\\
%    \qquad+\ldots+2a_{g-1,g-1}  &\text{if $j=n$ and $g$ is even}
  \end{cases}
 \end{equation*} 
 %\begin{enumerate}
 % \item[(K12)] $v_{j,i}$ for $k<j\leq n$ and $i\not\in\{g,g+s+j\}$
 % %\item[(K13)] $v_{n,i}$ for $i\neq g$
 % \item[(K13)] $v_{j,g}+e_{s+j-1,1}-e_{s+j,1}$ for $k<j<n$
 % %\item[(K14)] $v_{j,g+s+j}+2v_{j,g}$ for $k<j<n$
 % \item[(K14)] $v_{j,g+s+j}-2e_{s+j-1,1}+2e_{s+j,1}$ for $k<j<n$
 % \item[(K15)] 
 % \[\begin{cases}
 % v_{n,g}+e_{s+n-1,1}+a_{1,1}+2a_{3,3}\\
 % \qquad+\ldots+2a_{g-2,g-2}+a_{g-1,g-1}+u_{g-1,g-1}&\text{if $g$ is odd}\\
 %  v_{n,g}+e_{s+n-1,1}+a_{1,1}+2a_{3,3}\\
 %  \qquad+\ldots+2a_{g-1,g-1}  &\text{if $g$ is even}
 %   \end{cases}
%\]
% \end{enumerate}
\end{prop}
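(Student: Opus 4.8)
The plan is to adapt the decomposition argument of Proposition~\ref{prop:kernel:1} almost verbatim, adding one new layer for the puncture slides. By Theorems~\ref{tw:gen:pure} and~\ref{tw:gen:k}, the group ${\Cal{PM}}^k(N_{g,s}^n)$ is generated by the generators of ${\Cal{PM}}^+(N_{g,s}^n)$ together with $v_{k+1},\ldots,v_n$, so $\gen{\kre{X}}$ is the free abelian group on the symbols $a_{j,i},u_{1,i},e_{j,i}$ (and $b_{1,i}$ and/or $d_{j,i}$ according to the value of $g$, exactly as before) together with the new symbols $v_{j,i}$ for $k<j\leq n$ and $1\leq i\leq g+s+n-1$. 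First I would record, from the last batch of formulas in Section~\ref{sec:kernel}, that $\kre{\partial}_1(v_{j,i})=0$ for $i\notin\{g,g+s+j\}$ when $j<n$, and $\kre{\partial}_1(v_{n,i})=0$ for $i\neq g$; this shows the ``generic'' symbols $v_{j,i}$ lie in $\ker\kre{\partial}_1$ outright, giving the first case of (K12).

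Next I would verify that the corrected elements $\widetilde v_{j,i}$ of the remaining cases really lie in $\ker\kre\partial_1$. For $k<j<n$ this is the direct check that $\kre\partial_1(v_{j,g})=\delta_{s+j}$ is cancelled by $\kre\partial_1(e_{s+j-1,1}-e_{s+j,1})=(\gamma_1+\gamma_2+\delta_1+\cdots+\delta_{s+j-1})-(\gamma_1+\gamma_2+\delta_1+\cdots+\delta_{s+j})=-\delta_{s+j}$, and similarly $\kre\partial_1(v_{j,g+s+j})=-2\delta_{s+j}$ is cancelled by $\kre\partial_1(-2e_{s+j-1,1}+2e_{s+j,1})$. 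For $j=n$ one uses $\kre\partial_1(v_{n,g})=-(2\gamma_1+\cdots+2\gamma_g)-(\delta_1+\cdots+\delta_{s+n-1})$ and checks that adding $\kre\partial_1(e_{s+n-1,1})=\gamma_1+\gamma_2+\delta_1+\cdots+\delta_{s+n-1}$ leaves $-(\gamma_1+\gamma_2)-2\gamma_3-\cdots-2\gamma_g$ (for $g$ even; one extra $-\gamma_3$ term disappears in the odd case), and this in turn is killed by $\kre\partial_1(a_{1,1})=\gamma_1+\gamma_2$ together with $\kre\partial_1(2a_{j,j})=2\gamma_j+2\gamma_{j+1}$ telescoping — with the extra $-u_{1,1}$, whose boundary is $-\gamma_1+\gamma_2$, absorbing the parity discrepancy between the odd and even cases. (This is the one slightly fiddly sign bookkeeping step, and I expect it to be the main, though still routine, obstacle: getting the telescoping coefficients and the $g$ parity split to come out exactly as displayed.)

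Then I would run the decomposition/uniqueness argument. Given $h\in\gen{\kre X}\cap\ker\kre\partial_1$, first split off a combination $h_{12}$ of Generators (K12) so that the remainder $h'$ contains no symbol $v_{j,i}$: concretely, for each $v_{j,i}$ appearing in $h$ one subtracts the corresponding $\widetilde v_{j,i}$ with the same coefficient, and since distinct $\widetilde v_{j,i}$ involve distinct $v$-symbols, this $h_{12}$ is uniquely determined by $h$ and $h'$ is again uniquely determined. The element $h'$ then lies in the $\gen{\kre X}$ of ${\Cal{PM}}^+(N_{g,s}^n)$ (the $v$-free part) and also in $\ker\kre\partial_1$, because $\kre\partial_1(h_{12})=0$ by the previous paragraph, hence $\kre\partial_1(h')=\kre\partial_1(h)=0$. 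Now Proposition~\ref{prop:kernel:1} applies to $h'$ and expresses it uniquely as a combination of (K1)--(K6) plus the $g$-dependent extra generators, and these uniqueness statements chain together to give the desired unique expression for $h$. This establishes both that the listed generators span $\gen{\kre X}\cap\ker\kre\partial_1$ and that they are free, completing the proof.
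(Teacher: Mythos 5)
Your strategy coincides with the paper's: the paper proves this proposition only ``by an analogous argument'' to Proposition~\ref{prop:kernel:1}, and your layered decomposition --- peel off the $v$-symbols first (each $\widetilde{v}_{j,i}$ contains exactly one symbol $v_{j,i}$, with coefficient $1$), then apply Proposition~\ref{prop:kernel:1} to the $v$-free remainder --- is exactly that argument. The decomposition and uniqueness part of your write-up is correct, as are your kernel checks for $\widetilde{v}_{j,g}$ and $\widetilde{v}_{j,g+s+j}$ with $j<n$.

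The gap is in the one step you defer as ``routine'': the verification that $\widetilde{v}_{n,g}$ lies in $\ker\kre{\partial}_1$. Your description of that computation is incorrect as written. After adding $\kre{\partial}_1(e_{s+n-1,1})$ the remainder is $-(\gamma_1+\gamma_2)-2\gamma_3-\cdots-2\gamma_g$ for \emph{both} parities of $g$ (no ``$-\gamma_3$ term disappears''), and the consecutive sum $\sum_{j=2}^{g-1}\kre{\partial}_1(2a_{j,j})=2\gamma_2+4\gamma_3+\cdots+4\gamma_{g-1}+2\gamma_g$ does not cancel it: the sums $2a_{2,2}+\cdots+2a_{g-1,g-1}$ and $2a_{3,3}+\cdots+2a_{g-1,g-1}$ must be read as running over indices of a fixed parity (step two), so that their boundaries are $2\gamma_2+2\gamma_3+\cdots+2\gamma_g$ and $2\gamma_3+\cdots+2\gamma_g$. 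With that reading the $g$ even case closes up, but for $g$ odd the displayed element satisfies
\[\kre{\partial}_1\bigl(v_{n,g}+e_{s+n-1,1}+a_{1,1}-u_{1,1}+2a_{2,2}+2a_{4,4}+\cdots+2a_{g-1,g-1}\bigr)=\gamma_1+\gamma_2\neq 0\]
(already visible for $g=3$, $s=0$, $n=1$, where $e_{s+n-1,1}=a_{1,1}$); omitting the $a_{1,1}$ term does give a kernel element. So the formula needs a correction in the odd case, and a complete proof has to carry out this computation rather than assert that the bookkeeping ``comes out exactly as displayed'' --- that is precisely where the argument, as you have written it, fails.
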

\begin{prop}\label{prop:kernel:3}
 Let $g\geq 3$, $n\geq 2$ and $G={\Cal{M}}(N_{g,s}^n)$. Then $\gen{\kre{X}}\cap \ker\kre{\partial}_1$ is the abelian group generated by generators specified in the statement of Proposition \ref{prop:kernel:1}, Generators (K12) with $j=n$ specified in the statement of Proposition \ref{prop:kernel:2}, and additionally
 \begin{enumerate}
  \item[(K13)] $s_{j,i}$ if $j\leq n-1$ and $i\not\in\{g+s+j,g+s+j+1\},$
  %\item[(K17)] $s_{n-1,i}$ for $i\neq g+s+n-1$
  \item[(K14)] $s_{j,g+s+j}+s_{j,g+s+j+1}$ if $j<n-1$,
  \item[(K15)] $s_{j,g+s+j}-e_{s+j-1,1}+ 2e_{s+j,1}-e_{s+j+1,1}$ if $j<n-1$,
  \item[(K16)] 
  \[\begin{cases}
  s_{n-1,g+s+n-1}+2e_{s+n-1,1}-e_{s+n-2,1}+a_{1,1}\\
  \qquad-u_{1,1}+2a_{2,2}+\ldots+2a_{g-1,g-1}&\text{if $g$ is odd,}\\
   s_{n-1,g+s+n-1}+2e_{s+n-1,1}-e_{s+n-2,1}+a_{1,1}\\
   \qquad+2a_{3,3}+\ldots+2a_{g-1,g-1}  &\text{if $g$ is even.}
    \end{cases}
\]
 \end{enumerate}
\end{prop}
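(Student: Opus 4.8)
\textbf{Proof plan for Proposition \ref{prop:kernel:3}.}
The plan is to mimic the decomposition argument used in the proofs of Propositions \ref{prop:kernel:1} and \ref{prop:kernel:2}, now for the larger generating set of ${\Cal{M}}(N_{g,s}^n)$ provided by Theorem \ref{tw:gen:k}. By that theorem, $\gen{\kre{X}}$ is generated freely by the symbols appearing in Proposition \ref{prop:kernel:2} together with the new symbols $s_{j,i}$ for $1\leq j\leq n-1$ and $1\leq i\leq g+s+n-1$. So an arbitrary $h\in\gen{\kre{X}}\cap\ker\kre{\partial}_1$ can first be split as $h=h'+h''$, where $h'$ involves only the $s_{j,i}$ and $h''$ involves none of them. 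The key observation is that the formulas in \eqref{eq:partial} show $\kre{\partial}_1(s_{j,i})$ lies in the span of $\delta_{s+j},\delta_{s+j+1}$ (for $j<n-1$) or in the span of $\gamma_1,\ldots,\gamma_g,\delta_1,\ldots,\delta_{s+n-1}$ (for $j=n-1$), so one has to be careful that cancellations in $\kre{\partial}_1(h)$ do not mix the $s$-part with the rest; this is precisely why Generators (K15), (K16) and (K12) with $j=n$ are defined with correction terms involving $e_{s+j,1}$, $a_{j,j}$, $u_{1,1}$ rather than as bare $s_{j,i}$ or $v_{n,i}$.

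The concrete steps I would carry out are: first, peel off from $h$ a combination of (K13) — the $s_{j,i}$ with $i\notin\{g+s+j,g+s+j+1\}$ all lie in $\ker\kre{\partial}_1$ and involve symbols appearing in no other generator, so their coefficients are forced and uniquely determined. Next, for each $j<n-1$, the remaining $s$-symbols with that index are $s_{j,g+s+j}$ and $s_{j,g+s+j+1}$; since $\kre{\partial}_1(s_{j,g+s+j})=-\delta_{s+j}+\delta_{s+j+1}=-\kre{\partial}_1(s_{j,g+s+j+1})$, one can absorb the $s_{j,g+s+j+1}$-coefficient using (K14) and then adjust $s_{j,g+s+j}$ by the $e$-terms so that what remains is a multiple of (K15), whose boundary is zero; again the coefficients are uniquely pinned down because $s_{j,g+s+j+1}$ and $s_{j,g+s+j}$ occur in no other generator on the list. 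For $j=n-1$, the symbol $s_{n-1,g+s+n-1}$ has boundary $-(2\gamma_1+\cdots+2\gamma_g)-(\delta_1+\cdots+\delta_{s+n-1})-\delta_{s+n-1}$; I would use (K16) to cancel the $2\gamma_i$, $\delta_i$ and one $\delta_{s+n-1}$ contributions against those coming from $a_{j,j}$, $u_{1,1}$ (the parity split is exactly the one already recorded in Proposition \ref{prop:kernel:2}) and an $e_{s+n-1,1}$, $e_{s+n-2,1}$ term, leaving a zero-boundary combination. After all $s$-symbols have been removed this way, what is left is an element of $\gen{\kre{X}_{{\Cal{PM}}^n}}\cap\ker\kre{\partial}_1$ together with possible $v_{n,i}$ terms, and one quotes Proposition \ref{prop:kernel:2} (with $k=n$, so only $j=n$ occurs among the $v$'s; recall $v_n$ is the only puncture slide in the generating set of Theorem \ref{tw:gen:k}) to conclude that this part is freely generated by (K1)--(K11) and (K12) with $j=n$. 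Finally I would check that at each peeling step the subtracted generator is uniquely determined by $h$, exactly as in the ``$h_{k+1},h_{k+2}$ uniquely determined by $h_k$'' remark in the proof of Proposition \ref{prop:kernel:1}, which gives freeness of the full list.

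\textbf{Main obstacle.} The routine part is the bookkeeping of which symbol appears in which generator; the genuinely delicate point is verifying that the correction terms in (K15) and especially (K16) are the correct ones — i.e.\ that $\kre{\partial}_1$ of the displayed combination really vanishes. This amounts to reconciling $\kre{\partial}_1(s_{n-1,g+s+n-1})$ with $\kre{\partial}_1(e_{s+n-1,1})$, $\kre{\partial}_1(e_{s+n-2,1})$ and the alternating sums $\sum 2a_{j,j}$ (resp.\ with the extra $-u_{1,1}$ when $g$ is odd), using that $\kre{\partial}_1(a_{j,j})=\gamma_j+\gamma_{j+1}$ telescopes and $\kre{\partial}_1(u_{1,1})=-\gamma_1+\gamma_2$; the parity dichotomy is forced by whether the telescoping sum $\sum_{j}(\gamma_j+\gamma_{j+1})$ can produce $2\gamma_1+\cdots+2\gamma_g$ on its own or needs the $u_{1,1}$ correction. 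I expect this verification to be identical in spirit to the computation already hidden behind Generators (K12) in Proposition \ref{prop:kernel:2}, so once that is in hand the present proof is essentially a reduction to Proposition \ref{prop:kernel:2} plus the three new, easily checked kernel relations (K13)--(K16).
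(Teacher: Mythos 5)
Your plan follows the paper's own (implicit) argument exactly: the paper proves Proposition \ref{prop:kernel:1} by the peeling/decomposition method and then presents Propositions \ref{prop:kernel:2} and \ref{prop:kernel:3} as following by an analogous argument applied to the enlarged generating sets of Theorems \ref{tw:gen:pure} and \ref{tw:gen:k}, which is precisely the symbol-by-symbol peeling you describe, the boundary computations for (K13)--(K16) being the only genuinely new content. Two small quibbles: after removing the $s$-symbols you should invoke Proposition \ref{prop:kernel:2} with $k=n-1$ (not $k=n$, for which no $v_j$ would occur at all), and you defer rather than carry out the check that (K16) lies in $\ker\kre{\partial}_1$ --- though your description of that check (telescoping $\kre{\partial}_1(a_{j,j})=\gamma_j+\gamma_{j+1}$ against $-(2\gamma_1+\cdots+2\gamma_g)$, with the parity of $g$ dictating whether the $u_{1,1}$ correction is needed) is exactly the right computation.
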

 \section{Bounding $H_1({\Cal{PM}}^+(N_{g,s}^n);H_1(N_{g,s}^n;\zz))$ from above}\label{sec:above}
In this section we will use the formula \eqref{eq_rew_rel} to rewrite some relations between generators specified in Theorem \ref{tw:gen:pure:plus} as relations between homology classes. Our goal is to reduce these generating sets for homology groups to the ones specified in Propositions \ref{h1:g3} and \ref{h1:g4567} below.
%s \ref{tw:gen:pure:plus}--\ref{tw:gen:k} as relations between homology classes specified in Theorems \ref{prop:kernel:1}--\ref{prop:kernel:3}. 

Let 
\[\map{i}{N_{g,1}}{N_{g,s}^n}\]
be an embedding of a non--orientable subsurface of genus $g$ with one boundary component such that $N_{g,1}$ is disjoint from $\delta_1,\ldots,\delta_{s+n}$ (the complement of $N_{g,1}$ in $N_{g,s}^n$ is a disk containing $\delta_1,\ldots,\delta_{s+n}$). This embedding induces homomorphisms
\[\begin{CD}
   {\Cal{M}}(N_{g,1})@>i_*>> {\Cal{PM}}^+(N_{g,s}^n)\\
   @VV\ro V @VV\ro V\\
   \textrm{Aut}(H_1(N_{g,1};\zz)) @>i_*>> \textrm{Aut}(H_1(N_{g,s}^n;\zz))
  \end{CD}
\]
This leads to the following homomorphism 
\[\begin{CD}H_1({\Cal{M}}(N_{g,1});H_1(N_{g,1};\zz)) @>i_* >> H_1({\Cal{PM}}^+(N_{g,s}^n);H_1(N_{g,s}^n;\zz)).\end{CD}\]
Moreover, some of the generators specified in the statement of Proposition~\ref{prop:kernel:1} are images under this homomorphism of generators used in Proposition~4.2 of \cite{Stukow_homolTopApp} to compute $H_1({\Cal{M}}(N_{g,1});H_1(N_{g,1};\zz))$. This allows to transfer (via $i_*$) some of the relations between these generators obtained in Section~5 of \cite{Stukow_homolTopApp}. In particular,
\begin{itemize}
 \item Generators (K1): $a_{j,i}$ for $j=1,\ldots,g-1$,  $i=1,\ldots j-1,j+2,\ldots,g$ generate a cyclic group of order at most 2. They are trivial if $g\geq 7$.
 \item Generators (K2) generate a cyclic group of order at most 2. They are trivial if $g\geq 4$.
 \item Generators (K3): $u_{1,i}$ for $i=1,\ldots, g-2$ generate a cyclic group of order at most 2.
 \item Generator (K4) is trivial.
 \item Generators (K8): $b_{1,i}$ for $i=5,\ldots,g$ are superfluous (they can be expressed in terms of generators (K1)).
 \item Generators (K9), (K10) are trivial.
 \item Generator (K11) has order at most 2.
\end{itemize}
The formula \eqref{eq_rew_rel} and the relation
\[a_ja_{j+1}a_{j}=a_{j+1}a_ja_{j+1},\quad\text{ for $j=1,\ldots,g-2$},\]
imply that for $i>g$
\[ \begin{aligned}
   0&=([a_{j}]+a_{j}[a_{j+1}]+a_{j}a_{j+1}[a_{j}]-[a_{j+1}]-a_{j+1}[a_{j}]-a_{j+1}a_{j}[a_{j+1}])\otimes \xi_i\\
   &=a_{j,i}+a_{j+1,i}+a_{j,i}-a_{j+1,i}-a_{j,i}-a_{j+1,i}=a_{j,i}-a_{j+1,i}.
  \end{aligned}\]
  Hence 
  \begin{equation}\label{eq:ajg}
   a_{j,i}=a_{1,i}\quad\text{ for $j=1,\ldots,g-1$, $i>g$.}
  \end{equation}

If $s+n\geq 2$, then the relation
\[a_1e_j=e_ja_1\quad\text{ for $j=1,\ldots,s+n-1$}\]
gives
\[\begin{aligned}
   0&=([a_1]+a_1[e_j]-[e_j]-e_j[a_1])\otimes \xi_i\\
   &=[a_1]\otimes (I-\psi(e_j^{-1}))\xi_i-[e_j]\otimes(I-\psi(a_1^{-1}))\xi_i\\
   &=\pm\begin{cases}
     (a_{1,1}+a_{1,2})+a_{1,g+1}+\ldots+a_{1,g+j}-(e_{j,1}+e_{j,2})&\text{if $i=1,2$,}\\
     0&\text{if $i>2$.}
     \end{cases}
  \end{aligned}
\]
This relation implies that Generators (K6) are superfluous
\begin{equation}\label{eq:e12}
   e_{j,1}+e_{j,2}=(a_{1,1}+a_{1,2})+a_{1,g+1}+\ldots+a_{1,g+j}.
  \end{equation}
% Moreover, if $s+n\geq 3$, then this relation implies that 
% \[a_{1,g+2}=a_{1,g+3}=\ldots=a_{1,g+s+n-1}=0.\]
% $a_{1,g+2}=0$
% 
% , hence generators (K1): $a_{1,i}$ for $i=g+1,\ldots,g+s+n-1$ are trivial.
The braid relation
\[a_2e_ja_2=e_ja_2e_j\quad\text{ for $j=1,\ldots,s+n-1$}\]
gives
\[\begin{aligned}
   0&=([a_{2}]+a_{2}[e_{j}]+a_{2}e_{j}[a_{2}]-[e_{j}]-e_{j}[a_{2}]-e_{j}a_{2}[e_{j}])\otimes \xi_i\\
   &=[a_2]\otimes (I+\psi(e_{j}^{-1}a_2^{-1})-\psi(e_{j}^{-1}))\gamma_i\\
&+[e_{j}]\otimes(\psi(a_2^{-1})-I-\psi(a_2^{-1}e_{j}^{-1}))\gamma_i\\
&=\begin{cases}
                 a_{2,i}-e_{j,i}&\text{\hspace{-0cm}if $i\not\in\{1,2,3\}$,}\\
 a_{2,1}-e_{j,3}+a_{2,g+1}+\ldots+a_{2,g+j}&\text{\hspace{0cm}if $i=3$,}\\
(*)+(a_{2,2}+a_{2,3})+(e_{j,1}+e_{j,2})+e_{j,g+1}+\ldots+e_{j,g+j}&\text{\hspace{0cm}if $i=2$,}\\
(*)&\text{if $i=1$.}
                 \end{cases}
  \end{aligned}\]
In the above formula $(*)$ denotes some expression homologous to 0 by previously obtained relations. As we progress further, we will often perform simplifications based on previously obtained relations, from now on we will use symbol '$\equiv$' in such cases.
  
The first two cases of this relation and the formula \eqref{eq:ajg} imply that Generators (K5)
\begin{equation}\label{eq:eji}
 e_{j,i}=\begin{cases}
          a_{1,i}&\text{if $i\geq 4$,}\\
          a_{2,1}+a_{1,g+1}+\ldots+a_{1,g+j}&\text{if $i=3$.}
         \end{cases}
\end{equation}
are superfluous. 

% Additionally, $a_{2,g+2}=0$ if $s+n\geq 3$, hence generators (K1): $a_{2,i}$ for $i=g+1,\ldots,g+s+n-1$ are trivial.

The third case together with formulas \eqref{eq:e12} and \eqref{eq:eji} imply that
\begin{equation*}%\label{eq:ag2}
 2a_{1,g+j}=0\quad\text{ for $j=1,2,\ldots,s+n-1$,}
\end{equation*}
or equivalently
\begin{equation*}
 2(e_{j,1}+e_{j,2})=0.
\end{equation*}

%Hence generators (K1): $a_{j,i}$ for $i=g+1,\ldots,g+s+n-1$ generate a cyclic group.
% Relation 
% \[e_je_k=e_ke_j\quad\text{ for $j,k=1,2,\ldots,s+n-1$}\]
% gives
% \[\begin{aligned}
% 0&=([e_j]+e_j[e_k]-[e_k]-e_k[e_j])\otimes \xi_i\\
% &=[e_j]\otimes (I-\psi(e_k^{-1}))\xi_i-[e_k]\otimes(I-\psi(e_j^{-1}))\xi_i\\
% &=\pm \begin{cases}
%    e_{j,1}+e_{j,2}+e_{j,g+1}+\ldots+e_{j,g+k}\\
%    \quad -(e_{k,1}+e_{k,2}+e_{k,g+1}+\ldots+e_{k,g+j}) &\text{ for $i=1,2$}\\
%    0&\text{ for $i>2$}
%   \end{cases}
%   \end{aligned}
% \]
The relation
\[a_j d_k=d_ka_j\quad\text{ for $j=1,\ldots,g-1$, $k=1,\ldots,s-1$}\]
gives
\[\begin{aligned}
   0&=([a_j]+a_j[d_k]-[d_k]-d_k[a_j])\otimes \xi_i\\
   &=[a_j]\otimes (I-\psi(d_k^{-1}))\xi_i-[d_k]\otimes(I-\psi(a_j^{-1}))\xi_i\\
   &=\pm\begin{cases}
     d_{k,j}+d_{k,j+1}&\text{if $i=j,j+1$,}\\
     0&\text{if $i\not\in\{j,j+1\}$.}
     \end{cases}
  \end{aligned}
\]
This implies that Generators (K7)
\begin{equation}\label{eq:u1}
d_{k,j}=(-1)^{j-1}d_{k,1}\quad \text{for $j=2,\ldots,g$, $k=1,\ldots,s-1$}
\end{equation}
are superfluous.

Similarly, the relation
\[u_{1}d_k=d_ku_{1} \]
implies that 
\[0=[d_k]\otimes(\psi(u_{1}^{-1})-I)\gamma_{2}=d_{k,1}-d_{k,2}=0,\quad\text{ for $k=1,\ldots,s-1$,}\]
which together with the formula \eqref{eq:u1} implies that
\begin{equation*}%\label{eq:u2}
 2d_{k,1}=0\quad\text{for $k=1,\ldots,s-1$}.
\end{equation*}
Relation 
\[e_jd_k=d_ke_j\]
implies that
\[\begin{aligned}
0=[d_k]\otimes(\psi(e_{j}^{-1})-I)\gamma_1&\equiv d_{k,g+1}+\ldots+d_{k,g+j}\\
&\text{ for $k=1,\ldots,s$, $j=1,\ldots,s+n-1$.}
 \end{aligned}\]
 This implies that Generators (K7): $d_{j,i}$ are trivial for $i>g$.

 Suppose now that $g=3$ and consider the relation
 \begin{equation}\label{rel:u2}
  (u_1e_{s+n})^2=\widehat{d}_{s+n}=(a_1a_2)^6,
 \end{equation}
 where $\widehat{d}_{s+n}$ is defined as in Section \ref{sec:gen}. The right--hand side of this relation is a chain relation, and the left--hand side is a square of a crosscap slide (see \cite{Szep_curv}, Theorem 7.17, Relation (8)). If $i>3$, and
 \[\begin{aligned}
   M&=I+\psi(a_2^{-1}a_1^{-1})+\psi(a_2^{-1}a_1^{-1})^2+\ldots+\psi(a_2^{-1}a_1^{-1})^5,\\
   N&=I+\psi(e_{s+n}^{-1}u_1^{-1}), 
   \end{aligned}
\]
 then Relation \eqref{rel:u2} gives
 \[0=[u_1]\otimes N\xi_i+[e_{s+n}]\otimes \psi({u_1}^{-1})N\xi_i-[a_1]\otimes M\xi_i-[a_2]\otimes\psi(a_1^{-1})M\xi_i.\]
 If we now assume that $i>g$, then we get
 \begin{equation}
 \begin{aligned}\label{g3:ui2}
  0&=2[u_1]\otimes \xi_i+2[e_{s+n}]\otimes \xi_i-6[a_1]\otimes \xi_i-6[a_2]\otimes\xi_i\\
  &=2u_{1,i}+2[e_{s+n}]\otimes \xi_i.
 \end{aligned} 
 \end{equation}
 As we observed in the formula \eqref{di:3:esn},
 \[e_{s+n}a_2u_2a_1=a_2u_2,\]
 hence if $i>3$, then
\begin{equation}
 \begin{aligned}\label{g3:ui2:es}
0&=([e_{s+n}]+[a_2]+[u_2]+[a_1]-[a_2]-[u_2])\otimes \xi_i\\
&=[e_{s+n}]\otimes\xi_i+a_{1,i}.
   \end{aligned}
\end{equation}
By combining formulas \eqref{g3:ui2} and \eqref{g3:ui2:es} we get
\[2u_{1,i}=0,\quad \text{for $i>3$}.\]
Note that at this point we proved 
\begin{prop}\label{h1:g3}
Let $s+n\geq 1$. Then $H_1({\Cal{PM}}^+(N_{3,s}^n);H_1(N_{3,s}^n;\zz))$ is generated by 
%\begin{equation}
\[\begin{aligned}
\{&a_{1,1}+a_{1,2},a_{1,3},a_{1,4},\ldots,a_{1,2+s+n},\\
&u_{1,3},u_{1,4},\ldots,u_{1,2+s+n},d_{1,1},\ldots,d_{s-1,1}\}
\end{aligned} \]
%\end{equation}
and each of these generators has order at most 2. \qed
\end{prop}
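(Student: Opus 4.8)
The statement claims two things about $H_1({\Cal{PM}}^+(N_{3,s}^n);H_1(N_{3,s}^n;\zz))$: that the listed eleven-or-so families of classes generate it, and that each generator has order at most $2$. The plan is to start from the description of $\gen{\kre{X}}\cap\ker\kre\partial_1$ given in Proposition~\ref{prop:kernel:1} (the case $g=3$), which says this group is freely generated by Generators (K1)--(K7), and then quotient by the relations $\kre{R}$ coming from a generating set of relations of ${\Cal{PM}}^+(N_{3,s}^n)$, using the identification $H_1(G;M)=\gen{\kre{X}}\cap\ker\kre\partial_1/\gen{\kre{R}}$ from Section~2.2.

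\textbf{Eliminating superfluous generators.} First I would run through exactly the chain of relations already computed in the body of Section~\ref{sec:above}: the braid relations $a_ja_{j+1}a_j=a_{j+1}a_ja_{j+1}$ give \eqref{eq:ajg}, so (K1) reduces to the classes $a_{1,i}$ with $i>g$ together with the classes $a_{j,i}$ with $3\le i\le g$ coming from $N_{g,1}$ (for $g=3$ these latter are just $a_{1,3}$-type classes). The commutation relations $a_1e_j=e_ja_1$ give \eqref{eq:e12}, killing Generators (K6); the braid relations $a_2e_ja_2=e_ja_2e_j$ give \eqref{eq:eji}, killing Generators (K5) and also forcing $2(e_{j,1}+e_{j,2})=0$, equivalently $2a_{1,g+j}=0$. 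The relations $a_jd_k=d_ka_j$ give \eqref{eq:u1}, killing all $d_{k,j}$ with $j\ge 2$, the relation $u_1d_k=d_ku_1$ forces $2d_{k,1}=0$, and $e_jd_k=d_ke_j$ kills the $d_{k,i}$ with $i>g$. At this stage the only surviving generators are $a_{1,1}+a_{1,2}$ (this is Generator (K2) for $j=1$, since $g=3$), the classes $a_{1,3},a_{1,4},\dots,a_{1,g+s+n-1}$, the classes $u_{1,3},\dots,u_{1,g+s+n-1}$ (Generators (K3)), and $d_{1,1},\dots,d_{s-1,1}$ (Generators (K7)); note $g+s+n-1=2+s+n$. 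The only classes among (K3) with $i\le g$ absent are $u_{1,1},u_{1,2}$, which are superfluous via the relations transported from $N_{g,1}$ and Generator (K4), which is trivial there; I should state this transfer explicitly, citing Proposition~4.2 and Section~5 of \cite{Stukow_homolTopApp} exactly as in the bulleted list, noting that for $g=3$ the surface $N_{3,1}$ is genuinely non-orientable of genus $3$ so those results apply.

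\textbf{The order-two claim.} It remains to show each surviving generator has order at most $2$. For $a_{1,1}+a_{1,2}=$ (K2) and for $u_{1,i}$ with $i\le g$ and for (K11)-type classes, the order-$\le 2$ statement is inherited from $N_{g,1}$ via $i_*$. The new ingredient, special to $g=3$, is the chain/crosscap-slide relation \eqref{rel:u2}, namely $(u_1e_{s+n})^2=(a_1a_2)^6$; evaluating the rewriting formula \eqref{eq_rew_rel} on $\xi_i$ with $i>g$ gives $2u_{1,i}+2[e_{s+n}]\otimes\xi_i=0$ as in \eqref{g3:ui2}, and combining with \eqref{g3:ui2:es} (from $e_{s+n}a_2u_2a_1=a_2u_2$, giving $[e_{s+n}]\otimes\xi_i=-a_{1,i}$) yields $2u_{1,i}=0$ for $i>3$. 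Since $2a_{1,i}=0$ for $i>g$ was already established, and the $i\le g$ cases come from $N_{g,1}$, every generator on the list has order at most $2$, which is the claim $\qed$. The main obstacle is bookkeeping: one must be careful that, after all the substitutions \eqref{eq:ajg}--\eqref{eq:u1}, the index ranges line up so that exactly the classes written in Proposition~\ref{h1:g3} remain and nothing is double-counted or silently dropped; in particular one should check that the relations used so far do not impose any further identification \emph{among} the listed generators (they do not, but this needs the observation that each relation processed above either expresses a non-listed class in terms of listed ones or merely doubles-to-zero a single listed class). No independence is asserted here — that is deferred to Sections~\ref{sec:pmk:below}--\ref{sec:pm:below} — so the proof genuinely ends once generation and the order bound are in hand.
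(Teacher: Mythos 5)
Your proposal follows essentially the same route as the paper: it starts from Proposition~\ref{prop:kernel:1}, transfers the relations among the classes supported on the subsurface $N_{3,1}$ from \cite{Stukow_homolTopApp}, and then processes the braid and commutation relations together with the $g=3$ chain/crosscap-slide relation $(u_1e_{s+n})^2=(a_1a_2)^6$ to eliminate the superfluous generators and obtain $2u_{1,i}=0$ and $2a_{1,i}=0$ for $i>3$. This matches the paper's argument in Section~\ref{sec:above}, including the correct deferral of linear independence to Section~\ref{sec:pmk:below}.
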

For the rest of this section assume that $g\geq 4$.

The relation
\[e_ja_3=a_3e_j\quad \text{for $j=1,\ldots,s+n-1$,}\]
gives
\[\begin{aligned}
0&=[e_j]\otimes (I-\psi(a_3^{-1}))\gamma_1-[a_3]\otimes(I-\psi(e_j^{-1}))\gamma_1=\\
&=a_{3,1}+a_{3,2}+a_{3,g+1}+\ldots+a_{3,g+j}.
\end{aligned}
\]
Together with the formula \eqref{eq:ajg} this implies that Generators (K1): $a_{j,i}$ are trivial for $i>g$.

Observe that relations
\[u_ju_{j+1}u_j=u_{j+1}u_ju_{j+1},\quad \text{for $j=1,2$,}\]
easily imply that 
\[u_{3,i}=u_{2,i}=u_{1,i},\quad\text{for $i>g$}.\]

Hence, the relation
\[e_ju_{3}=u_{3}e_j,\quad \text{for $j=1,\ldots,s+n-1$,}\]
gives
\[\begin{aligned}
0&=[e_j]\otimes (I-\psi(u_{3}^{-1}))\gamma_1-[u_{3}]\otimes(I-\psi(e_j^{-1}))\gamma_1=\\
&=u_{3,1}+u_{3,2}+u_{3,g+1}+\ldots+u_{3,g+j}=\\
&=u_{3,1}+u_{3,2}+u_{1,g+1}+\ldots+u_{1,g+j}.
\end{aligned}
\]
This implies that Generators (K3): $u_{1,i}$ are trivial for $i>g$.

Relation 
\[e_jb_1=b_1e_j,\quad \text{for $j=1,\ldots,s+n-1$,}\]
gives
\[\begin{aligned}
0&=[e_j]\otimes (I-\psi(b_{1}^{-1}))(\gamma_1-\gamma_3)-[b_{1}]\otimes(I-\psi(e_j^{-1}))(\gamma_1-\gamma_3)=\\
&=b_{1,1}+b_{1,2}+b_{1,g+1}+\ldots+b_{1,g+j}.
\end{aligned}
\]
This implies that Generators (K8): $b_{1,i}$ are trivial for $i>g$.

At this point we proved 
\begin{prop}\label{h1:g4567}
 Let $g>3$ and $s+n\geq 1$. Then $H_1({\Cal{PM}}^+(N_{g,s}^n);H_1(N_{g,s}^n;\zz))$ is generated by
\[
\begin{aligned}
&\begin{cases}
    \left\{a_{1,3},u_{1,3},b_{1,1}-a_{1,1}-a_{3,3}, d_{1,1},\ldots,d_{s-1,1}\right\}&\text{if $g=4$,}\\
            \left\{a_{1,3},u_{1,3},b_{1,1}-a_{1,1}-a_{3,3}\right\}&\text{if $g=5,6$,}\\
            \left\{u_{1,3},b_{1,1}-a_{1,1}-a_{3,3}\right\}&\text{if $g\geq 7$,}
                                                 \end{cases}
                                                 \end{aligned} 
\]
and each of these generators has order at most 2. \qed
\end{prop}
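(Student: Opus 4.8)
The plan is to continue the bookkeeping argument exactly as in the $g=3$ case: start from the generating set of $\gen{\kre{X}}\cap\ker\kre{\partial}_1$ provided by Proposition~\ref{prop:kernel:1} (for $g\geq 4$ this consists of Generators (K1)--(K6), (K7) when $g=4$, and (K8)--(K11)), and successively eliminate generators using relations among the generators of ${\Cal{PM}}^+(N_{g,s}^n)$ listed in Theorem~\ref{tw:gen:pure:plus}, each time applying the rewriting formula \eqref{eq_rew_rel}. The transferred relations from \cite{Stukow_homolTopApp} via the embedding $\map{i}{N_{g,1}}{N_{g,s}^n}$ already dispose of the part of the generating set supported on the crosscap region: Generators (K2) and (K4) are trivial; (K9), (K10) are trivial; (K8): $b_{1,i}$ for $i=5,\ldots,g$ are expressible via (K1); and (K1), (K3), (K11) generate cyclic groups of order at most $2$, with (K1) trivial for $g\geq 7$. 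So what remains is to handle the generators whose second index exceeds $g$, i.e.\ the ones genuinely involving the disk with the $\delta$-curves.

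First I would record, as already done in the excerpt, that the braid relations $a_ja_{j+1}a_j=a_{j+1}a_ja_{j+1}$ give $a_{j,i}=a_{1,i}$ for $i>g$, that \eqref{eq:e12} makes Generators (K6) superfluous, that the braid relation $a_2e_ja_2=e_ja_2e_j$ makes Generators (K5) superfluous via \eqref{eq:eji} and forces $2(e_{j,1}+e_{j,2})=0$, and that the commuting relations with $d_k$ handle Generators (K7) (superfluous via \eqref{eq:u1}, with $2d_{k,1}=0$ and $d_{k,i}=0$ for $i>g$). Then, invoking $g\geq 4$, the relations $e_ja_3=a_3e_j$, $e_ju_3=u_3e_j$ (using $u_{3,i}=u_{2,i}=u_{1,i}$ for $i>g$, which follows from the braid relations among the $u_j$), and $e_jb_1=b_1e_j$ successively kill Generators (K1), (K3), (K8) for $i>g$. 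After these reductions the only surviving generators are: $a_{1,1}+a_{1,2}$ and $a_{1,3}$ (from (K2) and (K1) in the crosscap region); $u_{1,1},\ldots,u_{1,g-2}$, which by the transferred relations collapse to a single class, and since we want a single representative I would pick $u_{1,3}$; the class $b_{1,1}-a_{1,1}-a_{3,3}$ from (K11); and $d_{1,1},\ldots,d_{s-1,1}$ when $g=4$. One more transferred relation (or a direct lantern computation as in \cite{Stukow_homolTopApp}) shows $a_{1,1}+a_{1,2}$ and $a_{1,3}$ coincide up to the order-$2$ ambiguity, leaving $a_{1,3}$ as the single representative when $g=4,5,6$ and nothing of this type when $g\geq 7$ (since (K1) is then trivial). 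This yields precisely the list in the statement.

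It remains to argue that each listed generator has order at most $2$. For $a_{1,3}$, $u_{1,3}$, and $b_{1,1}-a_{1,1}-a_{3,3}$ this is inherited directly from the corresponding statements in \cite{Stukow_homolTopApp} transported through $i_*$. For $d_{k,1}$ (when $g=4$), the order-$2$ bound was obtained above from $u_1d_k=d_ku_1$ giving $d_{k,1}=d_{k,2}$ combined with $d_{k,2}=-d_{k,1}$ from \eqref{eq:u1}. So every generator in the reduced set is $2$-torsion, which completes the proof.

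The main obstacle I anticipate is purely organizational rather than conceptual: one must be careful that each relation is applied to the correct homology class $\xi_i$ (the formulas \eqref{eq:partial} show the support of each $\kre{\partial}_1$ is concentrated on a few $\gamma$'s and a block of $\delta$'s), and that when we write ``$\equiv$'' the terms being discarded have genuinely already been shown superfluous or $2$-torsion — in particular one must track the telescoping sums $a_{1,g+1}+\cdots+a_{1,g+j}$ appearing in \eqref{eq:e12}, \eqref{eq:eji} and verify they are consistently eliminated before they are reused. The delicate point specific to $g\geq 4$ is the interplay between the crosscap-region generator $b_1$ and the boundary generators: one needs $e_jb_1=b_1e_j$ evaluated on $\gamma_1-\gamma_3$ (so that the $b_1$-action is diagonalizable on the relevant subspace) to conclude $b_{1,i}=0$ for $i>g$, and checking that the coefficient vector $(I-\psi(b_1^{-1}))(\gamma_1-\gamma_3)$ does what is claimed is the one computation I would do explicitly.
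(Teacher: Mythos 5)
Your proposal is correct and follows essentially the same route as the paper: transfer the crosscap-region relations from \cite{Stukow_homolTopApp} via the embedding $N_{g,1}\hookrightarrow N_{g,s}^n$, reduce the generators with second index $i>g$ using the braid relations, \eqref{eq:e12}, \eqref{eq:eji}, the $d_k$-commutations, and then (for $g\geq 4$) the three commutation relations $e_ja_3=a_3e_j$, $e_ju_3=u_3e_j$ (with $u_{3,i}=u_{1,i}$ for $i>g$) and $e_jb_1=b_1e_j$ evaluated on $\gamma_1-\gamma_3$ — exactly the relations the paper uses, and your flagged computation $(I-\psi(b_1^{-1}))(\gamma_1-\gamma_3)=0$ does check out. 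The only slip is your later remark that $a_{1,1}+a_{1,2}$ survives and must be identified with $a_{1,3}$ by a further relation: as you yourself note at the outset, the transferred relations already make the (K2) class trivial for $g\geq 4$, so no such identification is needed.
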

\section{Bounding $H_1({\Cal{PM}}^{k}(N_{g,s}^n);H_1(N_{g,s}^n;\zz))$ from above}\label{secK:above}
As in the previous section, we will use the formula \eqref{eq_rew_rel} to reduce the generating set for the group  $H_1({\Cal{PM}}^{+}(N_{g,s}^n);H_1(N_{g,s}^n;\zz))$ to the one specified in the statements of Propositions \ref{h1v:g4567} and \ref{h1v:g3} below.

By Proposition \ref{prop:kernel:2}, $H_1({\Cal{PM}}^{k}(N_{g,s}^n);H_1(N_{g,s}^n;\zz))$ is generated by generators of the group $H_1({\Cal{PM}}^{+}(N_{g,s}^n);H_1(N_{g,s}^n;\zz))$ and additionally Generators (K12) corresponding to puncture slides $v_{k+1},\ldots,v_{n}$ (see Proposition \ref{tw:gen:pure}). All the computations from the previous section hold true, hence $H_1({\Cal{PM}}^{k}(N_{g,s}^n);H_1(N_{g,s}^n;\zz))$ is generated by Generators (K12): $\widetilde{v}_{j,i}$ and elements  specified in the statements of Propositions \ref{h1:g3} and \ref{h1:g4567}.

Note that for any $x\in {\Cal{PM}}^{+}(N_{g,s}^n)$ and $k<j\leq n$,
\[y=v_{j}^{-1}xv_{j}\in{\Cal{PM}}^{+}(N_{g,s}^n),\]
hence both $x$ and $y$ are products of generators of ${\Cal{PM}}^{+}(N_{g,s}^n)$ (that is these products do not contain puncture slides). Therefore, the relation
\[xv_j=v_jy\]
gives
\[
 \begin{aligned}
0&=[x]\otimes\gamma_i+[v_j]\otimes \psi (x^{-1})\gamma_i-[v_j]\otimes \gamma_i-[y]\otimes \psi(v_j^{-1})\gamma_i=\\
&=[v_j]\otimes (\psi (x^{-1})-I)\gamma_i+A_{j,i}^x=
\sum_{r=1}^{g+s+n-1} m_rv_{j,r}+A_{j,i}^x=\\
&=\sum_{r=1}^{g+s+n-1} m_r\widetilde{v}_{j,r}+\widetilde{A}_{j,i}^x,
 \end{aligned}
\]
for some coefficients $m_r$ and expressions $A_{j,i}^x$, $\widetilde{A}_{j,i}^x$ which contain neither $v_{j,r}$ nor $\widetilde{v}_{j,r}$. Moreover, by Proposition \ref{prop:kernel:2}, 
\[\sum_{r=1}^{g+s+n-1} m_r\widetilde{v}_{j,r}=-\widetilde{A}_{j,i}^x\]
is an element of the kernel $\ker\kre{\partial}_1$, hence this element is a linear combination of generators specified in the statements of Propositions \ref{h1:g3} and \ref{h1:g4567}.

Now we use the above general analysis to two special cases: $x=a_i$, $i=1,\ldots,g-1$ and $x=e_i$, $i=1,\ldots,s+n-1$. 

In the first case we get
\[0=[v_j]\otimes (\psi (a_i^{-1})-I)\gamma_i+A_{j,i}^{a_i}=
\widetilde{v}_{j,i}+\widetilde{v}_{j,i+1}+\widetilde{A}_{j,i}^{a_i}.\]
This implies that generators $\widetilde{v}_{j,2},\ldots,\widetilde{v}_{j,g}$ are superfluous.

In the second case we get
\[0=[v_j]\otimes (\psi (e_i^{-1})-I)\gamma_i+A_{j,i}^{e_i}=
\widetilde{v}_{j,1}+\widetilde{v}_{j,2}+\widetilde{v}_{j,g+1}+\ldots+\widetilde{v}_{j,g+i}+\widetilde{A}_{j,i}^{a_i}.\]
This implies that generators $\widetilde{v}_{j,g+1},\ldots,\widetilde{v}_{j,g+s+n-1}$ are superfluous. 

Relations $a_1v_j=v_ja_1$ and $u_1v_j=v_ju_1$ give 
\[\begin{aligned}
   0&=[v_j]\otimes (\psi (a_i^{-1})-I)\gamma_i-[a_1]\otimes (\psi (v_j^{-1})-I)\gamma_i=v_{j,1}+v_{j,2},\\
   0&=[v_j]\otimes (\psi (u_i^{-1})-I)\gamma_i-[u_1]\otimes (\psi (v_j^{-1})-I)\gamma_i=v_{j,1}-v_{j,2}.
  \end{aligned}
\]
respectively. This implies that $2v_{j,1}=2\widetilde{v}_{j,1}=0$ and we proved

\begin{prop}\label{h1v:g4567}
 Let $g> 3$, $s+n\geq 1$ and $0\leq k\leq n$. Then the group $H_1({\Cal{PM}}^{k}(N_{g,s}^n);H_1(N_{g,s}^n;\zz))$ is generated by
\[
\begin{aligned}
&\begin{cases}
    \left\{a_{1,3},u_{1,3},b_{1,1}-a_{1,1}-a_{3,3}, d_{1,1},\ldots,d_{s-1,1},v_{k+1,1},\ldots,v_{n,1}\right\}&\text{if $g=4$,}\\
            \left\{a_{1,3},u_{1,3},b_{1,1}-a_{1,1}-a_{3,3},v_{k+1,1},\ldots,v_{n,1}\right\}&\text{if $g=5,6$,}\\
            \left\{u_{1,3},b_{1,1}-a_{1,1}-a_{3,3},v_{k+1,1},\ldots,v_{n,1}\right\}&\text{if $g\geq 7$,}
                                                 \end{cases}
                                                 \end{aligned} 
\]
and each of these generators has order at most 2. \qed
\end{prop}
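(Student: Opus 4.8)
The plan is to start from the description of the kernel given in Proposition~\ref{prop:kernel:2}: for $G={\Cal{PM}}^k(N_{g,s}^n)$ the group $\gen{\kre{X}}\cap\ker\kre{\partial}_1$ is freely generated by the generators of the ${\Cal{PM}}^+$-case (Proposition~\ref{prop:kernel:1}) together with the corrected puncture-slide classes $\widetilde{v}_{j,i}$ of Generators~(K12), for $k<j\leq n$. Since $H_1$ is the quotient of this kernel by $\gen{\kre R}$, and since every relation used in Section~\ref{sec:above} to cut the ${\Cal{PM}}^+$-generators down already holds inside the subgroup ${\Cal{PM}}^+\podz{\Cal{PM}}^k$, all those reductions transfer verbatim. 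Hence the ${\Cal{PM}}^+$-part of the generating set immediately collapses to the list of Proposition~\ref{h1:g4567} (we are in the case $g>3$), and the entire remaining task is to reduce the new classes $\widetilde{v}_{j,i}$.

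For that, the key mechanism I would exploit is that ${\Cal{PM}}^+(N_{g,s}^n)$ is normal in ${\Cal{PM}}^k(N_{g,s}^n)$ (the quotient is $\zz_2^{n-k}$ by Theorem~\ref{tw:gen:pure}), so for any $x\in{\Cal{PM}}^+$ and any $k<j\leq n$ the conjugate $y=v_j^{-1}xv_j$ lies again in ${\Cal{PM}}^+$; thus both $x$ and $y$ are words in the ${\Cal{PM}}^+$-generators and contain no puncture slide. Feeding the relation $xv_j=v_jy$ into the rewriting formula~\eqref{eq_rew_rel} then expresses a $\zz$-combination $\sum_r m_r\widetilde{v}_{j,r}$ as an element lying in the span of the already-reduced generators of Proposition~\ref{h1:g4567}. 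I would run three families of $x$ through this: taking $x=a_i$ yields relations of the shape $\widetilde{v}_{j,i}+\widetilde{v}_{j,i+1}\equiv 0$ that successively eliminate $\widetilde{v}_{j,2},\dots,\widetilde{v}_{j,g}$; taking $x=e_i$ eliminates $\widetilde{v}_{j,g+1},\dots,\widetilde{v}_{j,g+s+n-1}$; and finally, since $a_1$ and $u_1$ \emph{commute} with $v_j$, the relations $a_1v_j=v_ja_1$ and $u_1v_j=v_ju_1$ give $v_{j,1}+v_{j,2}=0$ and $v_{j,1}-v_{j,2}=0$, so that $2v_{j,1}=2\widetilde{v}_{j,1}=0$. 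After these eliminations the only surviving (K12)-classes are $v_{k+1,1},\dots,v_{n,1}$, each of order at most $2$, and appending them to the list of Proposition~\ref{h1:g4567} produces exactly the three cases in the statement.

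The hard part will be the bookkeeping in the ``error terms'' attached to each relation $xv_j=v_jy$. Since $\widetilde{v}_{j,i}$ is not $v_{j,i}$ itself but $v_{j,i}$ adjusted by correction summands (inserted so that $\widetilde{v}_{j,i}\in\ker\kre{\partial}_1$), one must verify that after subtracting these corrections the residue of $[v_j]\otimes(\psi(x^{-1})-I)\xi_i$ together with the compensating $[x]\otimes(\psi(v_j^{-1})-I)\xi_i$ is a genuine combination of the surviving Proposition~\ref{h1:g4567} generators, and does not smuggle in a new independent class. Concretely this is a finite linear check read off from the matrices~\eqref{eq:psi}--\eqref{eq:vi}: one confirms that the non-$\widetilde v$ part of each relation involves only $a_{j,i},u_{1,i},e_{j,i},b_{1,i},d_{j,i}$ already controlled in Section~\ref{sec:above}. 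Crucially, no step requires knowing $y$ explicitly---only that it is puncture-slide-free---so once the correction-term bookkeeping is verified, the elimination of $\widetilde{v}_{j,2},\dots,\widetilde{v}_{j,g+s+n-1}$ is forced and the proposition follows.
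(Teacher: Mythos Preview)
Your proposal is correct and follows essentially the same route as the paper: start from Proposition~\ref{prop:kernel:2}, carry over the reductions of Section~\ref{sec:above} to collapse the ${\Cal{PM}}^+$-part to the list of Proposition~\ref{h1:g4567}, then use the normality trick $xv_j=v_jy$ with $x=a_i$ and $x=e_i$ to eliminate all $\widetilde v_{j,i}$ except $\widetilde v_{j,1}=v_{j,1}$, and finally the commutation of $a_1,u_1$ with $v_j$ to force $2v_{j,1}=0$. Your observation that the ``error term'' $\widetilde A_{j,i}^x$ automatically lies in $\ker\kre\partial_1$ (hence in the span of the already-reduced generators) is exactly the argument the paper uses to avoid computing $y$ explicitly.
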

For the rest of this section assume that $g=3$.

If $j<n$, then relations $a_1v_j=v_ja_1$ and $u_1v_j=v_ju_1$ give
\[\begin{aligned}
0&=[v_j]\otimes (\psi (a_1^{-1})-I)\gamma_3-[a_1]\otimes (\psi (v_j^{-1})-I)\gamma_3=-a_{1,3+s+j},\\
0&=[v_j]\otimes (\psi (u_1^{-1})-I)\gamma_3-[u_1]\otimes (\psi (v_j^{-1})-I)\gamma_3=-u_{1,3+s+j},
  \end{aligned}
\]
respectively. Hence, $a_{1,i}=u_{1,i}=0$ if $i>3+s+k$.

Finally, if $s+k>0$, then relations $a_1v_n=v_na_1$ and $u_1v_n=v_nu_1$ give
\[\begin{aligned}
   0&=[v_n]\otimes (\psi (a_1^{-1})-I)\gamma_3-[a_1]\otimes (\psi (v_n^{-1})-I)\gamma_3=\\
   &=2(a_{1,1}+a_{1,2})+2a_{1,3}+a_{1,4}+\ldots+a_{1,2+s+n}=\\
   &=a_{1,4}+\ldots+a_{1,3+s+k},\\
   0&=[v_n]\otimes (\psi (u_1^{-1})-I)\gamma_3-[u_1]\otimes (\psi (v_n^{-1})-I)\gamma_3=\\
   &=2(u_{1,1}+u_{1,2})+2u_{1,3}+u_{1,4}+\ldots+u_{1,2+s+n}=\\
   &=u_{1,4}+\ldots+u_{1,3+s+k},
  \end{aligned}
\]
respectively. Hence, $a_{1,3+s+k}$ and $u_{1,3+s+k}$ are superfluous provided $s+k>0$. This proves 
\begin{prop}\label{h1v:g3}
Let $s+n\geq 1$ and $0\leq k\leq n$. Then $H_1({\Cal{PM}}^{k}(N_{3,s}^n);H_1(N_{3,s}^n;\zz))$ is generated by 
\[
\begin{cases}
\left\{a_{1,1}+a_{1,2},a_{1,3},u_{1,3},v_{k+1,1},\ldots,v_{n,1}\right\}&\text{ if $s+k=0$},  \\
\{a_{1,1}+a_{1,2},a_{1,3},a_{1,4},\ldots,a_{1,2+s+k},\\
\quad u_{1,3},u_{1,4},\ldots,u_{1,2+s+k},\\
\quad d_{1,1},\ldots,d_{s-1,1}, v_{k+1,1},\ldots,v_{n,1}\}&\text{ if $s+k>0$,}
\end{cases}
\]
and each of these generators has order at most 2. \qed
\end{prop}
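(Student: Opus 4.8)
The plan is to follow the template already established in Section~\ref{secK:above} for the case $g>3$, specialising every computation to $g=3$ and keeping careful track of which generators survive. We start from Proposition~\ref{h1:g3}, which tells us that $H_1({\Cal{PM}}^+(N_{3,s}^n);H_1(N_{3,s}^n;\zz))$ is generated by $a_{1,1}+a_{1,2}$, the classes $a_{1,3},\dots,a_{1,2+s+n}$, the classes $u_{1,3},\dots,u_{1,2+s+n}$ and $d_{1,1},\dots,d_{s-1,1}$, all of order at most $2$. By Proposition~\ref{prop:kernel:2} together with Proposition~\ref{tw:gen:pure}, passing from ${\Cal{PM}}^+$ to ${\Cal{PM}}^k$ adjoins exactly the classes (K12), i.e.\ the $\widetilde v_{j,i}$ for $k<j\le n$; and the general conjugation argument at the start of Section~\ref{secK:above} (writing $xv_j=v_jy$ with $x,y\in{\Cal{PM}}^+$) shows verbatim that for each such $j$ only $\widetilde v_{j,1}=v_{j,1}$ survives, and that $2v_{j,1}=0$. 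So before using any $g=3$-specific relation the group is generated by the $a_{1,i}$, $u_{1,i}$, $d_{r,1}$ listed in Proposition~\ref{h1:g3} together with $v_{k+1,1},\dots,v_{n,1}$.

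The remaining task is to cut down the $a_{1,i}$ and $u_{1,i}$ with large index $i$ when $s+k>0$. First I would treat the puncture slides $v_j$ with $j<n$: the relations $a_1v_j=v_ja_1$ and $u_1v_j=v_ju_1$ are displayed in the excerpt, and evaluating them on $\gamma_3$ (using $\psi(v_j^{\pm1})(\delta_{s+j})=-\delta_{s+j}$, $\psi(v_j^{\pm1})(\gamma_3)=\gamma_3$ from \eqref{eq:vi}) yields $a_{1,3+s+j}\equiv0$ and $u_{1,3+s+j}\equiv0$. As $j$ ranges over $k+1,\dots,n-1$ this kills $a_{1,i},u_{1,i}$ for $4+s+k\le i\le 2+s+n$. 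Next the relations $a_1v_n=v_na_1$ and $u_1v_n=v_nu_1$, evaluated on $\gamma_3$ with $\psi(v_n^{\pm1})(\gamma_g)=\gamma_g-(2\gamma_1+\dots+2\gamma_g+\delta_1+\dots+\delta_{s+n-1})$, give (after the simplification $\equiv$, using $2(a_{1,1}+a_{1,2})\equiv0$, $2a_{1,3}\equiv0$ and the already-trivial high-index terms) exactly $a_{1,4}+\dots+a_{1,3+s+k}\equiv0$ and $u_{1,4}+\dots+u_{1,3+s+k}\equiv0$, so provided $s+k>0$ the single generators $a_{1,3+s+k}$ and $u_{1,3+s+k}$ become superfluous. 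Collecting survivors: if $s+k=0$ (so $k=0$ and $s=0$, all punctures non-oriented on a closed surface) nothing extra is killed and we are left with $a_{1,1}+a_{1,2}$, $a_{1,3}$, $u_{1,3}$ and $v_{1,1},\dots,v_{n,1}$; if $s+k>0$ we are left with $a_{1,1}+a_{1,2}$, $a_{1,3},\dots,a_{1,2+s+k}$, $u_{1,3},\dots,u_{1,2+s+k}$, $d_{1,1},\dots,d_{s-1,1}$ and $v_{k+1,1},\dots,v_{n,1}$, all of order at most $2$.

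I expect the only real subtlety to be bookkeeping rather than anything conceptual: one must be attentive to the ranges of indices (the top index in Proposition~\ref{h1:g3} is $2+s+n$, while the cutoff that survives when $s+k>0$ is $2+s+k$, and when $s+k=0$ one cannot invoke the $v_n$ relations to kill $a_{1,3},u_{1,3}$, which is why those appear only in the first case), and to the fact that the relations $xv_j=v_jy$ used in the ``$\equiv$'' steps are legitimate precisely because $y=v_j^{-1}xv_j$ again lies in ${\Cal{PM}}^+(N_{g,s}^n)$, so its contribution is already expressed in terms of the ${\Cal{PM}}^+$-generators and hence, by the preceding computations, in terms of the claimed generators. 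No new relation beyond those already exploited for $g>3$ and the three $g=3$ relations above is needed, and the ``order at most $2$'' clause is inherited from Proposition~\ref{h1:g3} for the $a,u,d$ classes and from $2v_{j,1}=0$ for the puncture-slide classes. This completes the proof of Proposition~\ref{h1v:g3}.
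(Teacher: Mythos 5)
Your argument is correct and follows the paper's proof essentially verbatim: the general (K12) analysis reduces the puncture--slide contribution to $v_{k+1,1},\ldots,v_{n,1}$ with $2v_{j,1}=0$; the commutations of $a_1$ and $u_1$ with $v_j$ for $k<j<n$, evaluated on $\gamma_3$, kill $a_{1,i}$ and $u_{1,i}$ for $i>3+s+k$; and the commutations with $v_n$ make $a_{1,3+s+k}$ and $u_{1,3+s+k}$ superfluous when $s+k>0$. One slip in the justification: you cite $\psi(v_j^{\pm 1})(\gamma_3)=\gamma_3$, but for $g=3$ we have $\gamma_3=\gamma_g$, and \eqref{eq:vi} gives $\psi(v_j^{\pm 1})(\gamma_3)=\gamma_3+\delta_{s+j}$ for $j<n$; it is precisely this $\delta_{s+j}$ term that produces $-a_{1,3+s+j}$ when the relation $a_1v_j=v_ja_1$ is rewritten, so with the action as you state it the computation would read $0=0$ and nothing would be killed. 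Since the conclusion you draw (and the index $3+s+j$) is the correct one, this is a mis-citation rather than a genuine gap.
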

\section{Bounding $H_1({\Cal{M}}(N_{g,s}^n);H_1(N_{g,s}^n;\zz))$ from above}\label{secM:above}
As in the previous two sections, we will use the formula \eqref{eq_rew_rel} to reduce the generating set for the group  $H_1({\Cal{M}}(N_{g,s}^n);H_1(N_{g,s}^n;\zz))$ to the one specified in the statement of Proposition \ref{prop:h1:vs} below.

By Proposition \ref{prop:kernel:3}, $H_1({\Cal{M}}(N_{g,s}^n);H_1(N_{g,s}^n;\zz))$ is generated by generators of the group $H_1({\Cal{PM}}^{+}(N_{g,s}^n);H_1(N_{g,s}^n;\zz))$, Generators (K12) corresponding to puncture slide $v_n$ and additionally Generators (K13)--(K16) corresponding to elementary braids: $s_1,\ldots,s_{n-1}$. All computations from the previous two sections hold true, hence $H_1({\Cal{M}}(N_{g,s}^n);H_1(N_{g,s}^n;\zz))$ is generated by generator (K12): $v_{n,1}$, generators (K13)--(K16), and elements  specified in the statements of Propositions \ref{h1:g3} and \ref{h1:g4567}. Moreover if $g=3$ and $i\geq 3+s$, then $a_{1,i}$ and $u_{1,i}$ are superfluous.

If $i\neq s+j$, then the relation
\[e_is_j=s_je_i\]
gives
\[\begin{aligned}
  0&=[s_j]\otimes (\psi (e_i^{-1})-I)\gamma_1-[e_i]\otimes (\psi (s_j^{-1})-I)\gamma_1 \\
   &=s_{j,1}+s_{j,2}+s_{j,g+1}+\ldots+s_{j,g+i}=A_{j,i}.
  \end{aligned}
\]
In particular, Generator (K14)
\[s_{j,g+s+j}+s_{j,g+s+j+1}=A_{j,s+j+1}-A_{j,s+j-1}=0\]
is trivial and Generators (K13) of the form
\[s_{j,g+i}=\begin{cases}
             A_{j,i}-A_{j,i-1}=0&\text{if $i>1$,}\\
             A_{j,1}-(s_{j,1}+s_{j,2})&\text{if $i=1$}
            \end{cases}\]
are superfluous.

The relation
\[s_ja_i=a_is_j\quad \text{ for $i<g$}\]
gives
\begin{equation}\label{s:cyc:A}
   0=[s_j]\otimes (\psi (a_i^{-1})-I)\gamma_i-[a_i]\otimes (\psi (s_j^{-1})-I)\gamma_i=s_{j,i}+s_{j,i+1}.
\end{equation}
Relation $s_{j}s_{j+1}s_j=s_{j+1}s_js_{j+1}$ gives
\begin{equation*}%\label{s:cyc:B}
\begin{aligned}
   0&=([s_{j}]+s_{j}[s_{j+1}]+s_{j}s_{j+1}[s_{j}]-[s_{j+1}]-s_{j+1}[s_{j}]-s_{j+1}s_{j}[s_{j+1}])\otimes \gamma_1\\
   &=s_{j,i}+s_{j+1,i}+s_{j,i}-s_{j+1,i}-s_{j,i}-s_{j+1,i}=s_{j,i}-s_{j+1,i}.
\end{aligned}
\end{equation*}
This together with the formula \eqref{s:cyc:A} implies that Generators (K13) generate a cyclic group. Moreover, the relation
\[s_ju_1=u_1s_j\]
implies that
\[ 0=[s_j]\otimes (\psi (u_1^{-1})-I)\gamma_1-[u_1]\otimes (\psi (s_j^{-1})-I)\gamma_1=s_{j,1}-s_{j,2},\]
which together with the formula \eqref{s:cyc:A} implies that the cyclic group generated by generators (K13) has order at most two.

By Proposition \ref{prop:rel:sj:ej}, 
\[e_{s+j-1}e_{s+j+1}s_j=e_{s+j}s_j^3e_{s+j},\]
and this relation gives 
\begin{equation}\label{es:rel:hom}
\begin{aligned}0=&([e_{s+j-1}]+e_{s+j-1}[e_{s+j+1}]+e_{s+j-1}e_{s+j+1}[s_j])\otimes\gamma_1\\
   &-([e_{s+j}]+e_{s+j}(1+s_j+s_j^2)[s_j]-e_{s+j}s_j^3[e_{s+j}])\otimes\gamma_1\\
   =&e_{s+j-1,1}+[e_{s+j}]\otimes(-I-\psi(s_j^{-3})\psi(e_{s+j}^{-1})\gamma_1+[e_{s+j+1}]\otimes\psi(e_{s+j-1}^{-1})\gamma_1\\
   &+[s_j]\otimes \left(\psi(e_{s+j+1}^{-1})\psi(e_{s+j-1}^{-1})-(I+\psi(s_j^{-1})+\psi(s_j^{-2}))\psi(e_{s+j}^{-1})\right)\gamma_1.
  \end{aligned} 
\end{equation}
If $j<n-1$ this gives
\[\begin{aligned}
   0&=(s_{j,g+s+j}-e_{s+j-1,1}+2e_{s+j,1}-e_{s+j+1,1})\\
   &+(e_{s+j,1}+e_{s+j,2})+(e_{s+j,g+1}+\ldots+e_{s+j,g+s+j-1}+e_{s+j,g+s+j+1})\\
   &-(e_{s+j+1,1}+e_{s+j+1,2})-(e_{s+j+1,g+1}+\ldots+e_{s+j+1,g+s+j-1})+(*).
  \end{aligned}
\]
This implies that Generator (K15) is superfluous. If $j=n-1$, then the formula \eqref{es:rel:hom} yields a more complicated expression, however it is also of the form
\[0=s_{n-1,g+s+n-1}+(*),\]
where $(*)$ denotes some expression which does not contain $s_{j,i}$. This implies that Generator (K16) is also superfluous, and we proved that
\begin{prop}\label{prop:h1:vs}
 Let $g\geq 3$ and $n\geq 2$. Then $H_1({\Cal{M}}(N_{g,s}^n);H_1(N_{g,s}^n;\zz))$ is generated by 
\[
\begin{cases}
a_{1,1}+a_{1,2},a_{1,3},u_{1,3},v_{n,1},s_{1,1}&\text{ if $g=3$ and $s=0$},  \\
a_{1,1}+a_{1,2},a_{1,3},a_{1,4},\ldots,a_{1,2+s},\\
\quad u_{1,3},u_{1,4},\ldots,u_{1,2+s},\\
\quad d_{1,1},\ldots,d_{s-1,1}, v_{n,1},s_{1,1}&\text{ if $g=3$ and $s>0$},\\
a_{1,3},u_{1,3},b_{1,1}-a_{1,1}-a_{3,3},d_{1,1},\ldots,d_{s-1,1},v_{n,1},s_{1,1}&\text{ if $g=4$},  \\
a_{1,3},u_{1,3},b_{1,1}-a_{1,1}-a_{3,3},v_{n,1},s_{1,1}&\text{ if $g=5$ or $g=6$},  \\
u_{1,3},b_{1,1}-a_{1,1}-a_{3,3},v_{n,1},s_{1,1}&\text{ if $g\geq 7$},
\end{cases}
\]
and each of these generators has order at most 2. \qed
\end{prop}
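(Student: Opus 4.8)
The plan is to follow the same strategy used for Propositions~\ref{h1:g3}--\ref{h1v:g3}: start from the generating set for $\gen{\kre{X}}\cap\ker\kre{\partial}_1$ provided by Proposition~\ref{prop:kernel:3}, then systematically kill or identify generators by rewriting suitable relations of ${\Cal{M}}(N_{g,s}^n)$ via the formula \eqref{eq_rew_rel}. Since all relations among the twists and crosscap transpositions used in Section~\ref{sec:above} and Section~\ref{secK:above} are still relations in ${\Cal{M}}(N_{g,s}^n)$, every simplification already obtained carries over verbatim; concretely, the images of Generators (K1)--(K12) reduce exactly to the generators listed in Propositions~\ref{h1:g3}, \ref{h1:g4567} together with $v_{n,1}$, and (for $g=3$) the extra collapse $a_{1,i}=u_{1,i}=0$ for $i\geq 3+s$ coming from the relations $a_1v_n=v_na_1$ and $u_1v_n=v_nu_1$ as in the proof of Proposition~\ref{h1v:g3}. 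So the only genuinely new work is to handle Generators (K13)--(K16) attached to the elementary braids $s_1,\ldots,s_{n-1}$, and to show they collapse down to a single class $s_{1,1}$ of order at most $2$.

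For this I would proceed in the following order. First, using the commuting relations $e_is_j=s_je_i$ for $i\neq s+j$, rewrite via \eqref{eq_rew_rel} to get the expressions $A_{j,i}=s_{j,1}+s_{j,2}+s_{j,g+1}+\dots+s_{j,g+i}$ homologous to $0$; taking successive differences $A_{j,i}-A_{j,i-1}$ shows the individual $s_{j,g+i}$ with $i>1$ vanish (these are Generators (K13) with index $>g$), and $A_{j,s+j+1}-A_{j,s+j-1}=0$ kills Generator (K14). Second, the relations $s_ja_i=a_is_j$ for $i<g$ give $s_{j,i}+s_{j,i+1}=0$, and the braid relation $s_js_{j+1}s_j=s_{j+1}s_js_{j+1}$ gives $s_{j,i}=s_{j+1,i}$; combining these reduces all of Generators (K13) to a single cyclic group generated by $s_{1,1}$. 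Third, $s_ju_1=u_1s_j$ yields $s_{j,1}=s_{j,2}$, which with $s_{j,1}+s_{j,2}=0$ forces $2s_{1,1}=0$. Finally I would invoke Proposition~\ref{prop:rel:sj:ej}, $e_{s+j-1}e_{s+j+1}s_j=e_{s+j}s_j^3e_{s+j}$, and rewrite it with \eqref{eq_rew_rel}; for $j<n-1$ this produces an identity of the shape $0=s_{j,g+s+j}+(\text{terms already known to be reducible})$, showing Generator (K15) is superfluous, and for $j=n-1$ the same relation (now carrying the more involved action of $s_{n-1}$ on homology from \eqref{eq:vi}) gives $0=s_{n-1,g+s+n-1}+(*)$ with $(*)$ free of $s_{j,i}$, so Generator (K16) is also superfluous. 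Assembling what remains, together with the already-established bounds for $H_1({\Cal{PM}}^+(N_{g,s}^n);H_1(N_{g,s}^n;\zz))$ and for $v_{n,1}$, gives exactly the lists in the statement, each generator of order at most $2$.

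The main obstacle I anticipate is the case $j=n-1$ in the last step: because $\psi(s_{n-1})$ is not a simple transposition but the nontrivial global formula of \eqref{eq:vi} (it sends $\delta_{s+n-1}$ to $-(2\gamma_1+\dots+2\gamma_g-\delta_1+\dots+\delta_{s+n-1})$), expanding $[s_{n-1}]\otimes\bigl(\psi(e_{s+n}^{-1})\psi(e_{s+n-2}^{-1})-(I+\psi(s_{n-1}^{-1})+\psi(s_{n-1}^{-2}))\psi(e_{s+n}^{-1})\bigr)\gamma_1$ produces a long combination involving $a_{j,j}$, $u_{1,1}$ and many $e_{s+n,i}$, $e_{s+n-2,i}$ terms. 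One must carefully check — using \eqref{eq:e12}, \eqref{eq:eji}, \eqref{eq:ajg} and the parity-dependent definition of $\widetilde v_{n,i}$ from Proposition~\ref{prop:kernel:2} — that every term other than $s_{n-1,g+s+n-1}$ is already known to be a combination of the surviving generators, so that no new relation among those survivors is created. This is precisely the bookkeeping that Generator (K16) was designed to package; verifying it is routine but lengthy, and is where I would spend most of the effort. The remaining steps are short and parallel to arguments already carried out in Sections~\ref{sec:above}--\ref{secK:above}.
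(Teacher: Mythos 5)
Your proposal is correct and follows essentially the same route as the paper: carry over the reductions of (K1)--(K12) from Sections~\ref{sec:above}--\ref{secK:above}, use $e_is_j=s_je_i$, $s_ja_i=a_is_j$, the braid relation and $s_ju_1=u_1s_j$ to collapse (K13)--(K14) to a single class $s_{1,1}$ of order at most $2$, and invoke Proposition~\ref{prop:rel:sj:ej} to eliminate (K15) and (K16). The only blemish is a harmless index slip in your displayed expression for the $j=n-1$ case (the last factor should be $\psi(e_{s+n-1}^{-1})$, not $\psi(e_{s+n}^{-1})$), which does not affect the argument.
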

\section{Bounding $H_1({\Cal{PM}}^k(N_{g,s}^n);H_1(N_{g,s}^n;\zz))$ from below}   \label{sec:pmk:below}                                              
In this section we use various quotients of ${\Cal{PM}}^k(N_{g,s}^n)$ in order to prove that all homology classes specified in Propositions \ref{h1v:g4567} and \ref{h1v:g3} are nontrivial. This will complete the proof of Theorem \ref{MainThm1}.

If we glue a disk to each boundary components of $N_{g,s}^n$ and forget about punctures, then we get a closed non--orientable surface $N_g$ of genus $g$. If
\[\map{i}{N_{g,s}^n}{N_{g}}\]
is the corresponding inclusion map, then $i$ induces homomorphisms
\[\begin{CD}
   {\Cal{PM}}^k(N_{g,s}^n)@>i_*>> {\Cal{M}}(N_{g})\\
   @VV\psi V @VV\psi V\\
   \textrm{Aut}(H_1(N_{g,s}^n;\zz)) @>i_*>> \textrm{Aut}(H_1(N_{g};\zz))
  \end{CD}
\]
This leads to the following homomorphism 
\[\begin{CD}H_1({\Cal{PM}}^k(N_{g,s}^n);H_1(N_{g,s}^n;\zz)) @>i_* >> H_1({\Cal{M}}(N_{g});H_1(N_{g};\zz)).\end{CD}\]
Moreover, by Theorem 1.1 of \cite{Stukow_homolTopApp} (see the very last formula in the proof of that theorem), we have
\begin{equation*}
 \begin{aligned}%\label{nontriv:g7}
 i_*(a_{1,1}+a_{1,2})&\neq 0 \quad\text{if $g=3$,}\\
  i_*(u_{1,3})&\neq 0 \quad\text{if $g\geq 3$,}\\
   i_*(a_{1,3})&\neq 0 \quad\text{if $g\in\{3,4,5,6\}$,}\\
   i_*(b_{1,1}-a_{1,1}-a_{3,3})&\neq 0 \quad\text{if $g\geq 4$}
  \end{aligned}
\end{equation*}
and all these classes are linearly independent.

In order to prove that homology classes corresponding to puncture slides are nontrivial, fix $k<j\leq n$ and consider the following homomorphisms
\[\begin{aligned}
 &\map{\alpha}{{\Cal{PM}}^+(N_{g,s}^n)}{\zz_2},\\
 &\map{\beta}{H_1(N_{g,s}^n;\zz)}{\zz_2}.
\end{aligned}\]
The first homomorphism is defined as follows: $\alpha(f)=1$ if and only if $f$ changes the local orientation around the puncture $z_j$. The second one is the composition 
\[\begin{CD}
H_1(N_{g,s}^n;\zz)@>>>H_1(N_{g,s}^n;\zz_2)@>>>\gen{\gamma_1}.
\end{CD}\]
of the reduction to $\zz_2$ coefficients and the projection: 
\[\begin{cases}
  \gamma_1,\gamma_2,\ldots,\gamma_g \longmapsto \gamma_1,\\
  \delta_1,\delta_2,\ldots,\delta_{s+n-1}\longmapsto 0.
  \end{cases}
\]
It is straightforward to check that for any $m\in H_1(N_{g,s}^n;\zz)$ and $f\in {\Cal{PM}}^+(N_{g,s}^n)$
\[\beta(f(m))=\beta(m).\]
Hence, if we regard $\gen{\gamma}$ as a trivial $\alpha({\Cal{PM}}^+(N_{g,s}^n))$ module, then $(\alpha,\beta)$ induce homomorphism
\[\begin{CD}H_1({\Cal{PM}}^k(N_{g,s}^n);H_1(N_{g,s}^n;\zz)) @>(\alpha,\beta) >> H_1(\zz_2;\zz_2)\cong \zz_2. \end{CD}\]
Moreover, if $x$ is one of the generators specified in the statements of Propositions \ref{h1v:g4567} and \ref{h1v:g3}, then
\[(\alpha,\beta)(x)\neq 0\iff x=v_{j,1}.\]
This implies that $v_{j,1}$ is nontrivial and independent from other generators.

If $s\geq 2$ and $g\leq 4$, then for any fixed $1\leq j\leq s-1$ there is a homomorphism 
\[\begin{CD}H_1({\Cal{PM}}^k(N_{g,s}^n);H_1(N_{g,s}^n;\zz)) @>i_* >> H_1({\Cal{M}}(N_{g+2});H_1(N_{g+2};\zz))\end{CD}\]
induced by the inclusion $\map{i}{N_{g,s}^n}{N_{g+2}}$, where $N_{g+2}$ is a closed non--orientable surface of genus $g+2$
obtained from $N_{g,s}^n$, by forgetting the punctures, connecting boundary components of numbers $1\leq j\leq s-1$ and $s$ by a cylinder, and gluing a disk to all the remaining boundary components.

Moreover, $\delta_j$ becomes a two--sided nonseparating circle in $N_{g+2}$, hence we can choose generators for ${\Cal{M}}(N_{g+2})$ so that 
\[i_*(d_{j,1})=a_{1,3}\in H_1({\Cal{M}}(N_{g+2});H_1(N_{g+2};\zz)).\]
By Theorem 1.1 of \cite{Stukow_homolTopApp}, this homology class is nontrivial provided $g+2\leq 6$.
This completes the proof of Theorem~\ref{MainThm1} if $g>3$ or $s+k\leq 1$. 

Hence, assume that $g=3$ and $s+k\geq 2$. Fix $1\leq j\leq s+k-1$ and and glue a disk with a puncture to each boundary component of $N_{3,s}^n$. Then forget about all the punctures except those with numbers $j$ and $s+k$. As a result we obtain an inclusion 
\[\map{i}{N_{3,s}^n}{N_{3}^2},\]
which leads to a homomorphism
\[\begin{CD}H_1({\Cal{PM}}^k(N_{g,s}^n);H_1(N_{g,s}^n;\zz)) @>i_* >> H_1({\Cal{M}}(N_{3}^2);H_1(N_{3}^2;\zz)).\end{CD}\]
By Theorem 1.2 of \cite{PawlakStukowHomoPuncturedG3},
\[\begin{aligned}
   i_*(a_{1,3+j})=a_{1,4}\neq 0,\\
   i_*(u_{1,3+j})=u_{1,4}\neq 0.
  \end{aligned}
\]
This implies that generators $a_{1,4},\ldots,a_{1,2+s+k}$, $u_{1,4},\ldots,u_{1,2+s+k}$ are nontrivial and linearly independent. This concludes the proof of Theorem \ref{MainThm1}.
\section{Bounding $H_1({\Cal{M}}(N_{g,s}^n);H_1(N_{g,s}^n;\zz))$ from below}  
\label{sec:pm:below}                                              
In this section we will show that all generators specified in the statement of Proposition \ref{prop:h1:vs} are nontrivial and linearly independent. This will complete the proof of Theorem \ref{MainThm2}.

As in the previous section, we argue that homology classes
\begin{equation}\label{eq:prev:gen}  \begin{aligned}
\begin{cases}
 a_{1,1}+a_{1,2}&\text{if $g=3$,}\\
 a_{1,4},\ldots a_{1,2+s}&\text{if $g=3$ and $s>1$,}\\
 u_{1,4},\ldots u_{1,2+s}&\text{if $g=3$ and $s>1$,}\\
 a_{1,3}&\text{if $g<7$,}\\
 u_{1,3},\\
 b_{1,1}-a_{1,1}-a_{3,3}&\text{ if $g\geq 4$,}\\
 d_{j,1}&\text{if $g\leq 4$ and $1\leq j\leq s-1$}
\end{cases}
  \end{aligned}
\end{equation}
are nontrivial and independent. Hence, it is enough to show that if 
\[0=A+\nu v_{n,1}+\mu s_{1,1},\]
where $A$ is a linear combination of generators \eqref{eq:prev:gen}, then $\nu=\mu=0$. Let 
\[\map{\beta}{H_1(N_{g,s}^n;\zz)}{\zz_2}\]
be defined as in the previous section and define 
\[\map{\alpha}{{\Cal{M}}(N_{g,s}^n)}{\zz_2}\]
as follows: $\alpha(f)=1$ if and only if $f$ changes the local orientation around an odd number of punctures. Then there is an induced homomorphism
\[\begin{CD}H_1({\Cal{M}}(N_{g,s}^n);H_1(N_{g,s}^n;\zz)) @>(\alpha,\beta) >> H_1(\zz_2;\zz_2)\cong \zz_2 \end{CD}\]
and 
\[0=(\alpha,\beta)(A+\nu v_{n,1}+\mu s_{1,1})=\nu.\]
Now define
\[\map{\alpha'}{{\Cal{M}}(N_{g,s}^n)}{\zz_2}\]
to be the sign of the permutation
\[(z_1,\ldots,z_n)\mapsto (f(z_1),f(z_2),\ldots,f(z_n)),\]
that is $\alpha'(f)=1$ if and only if the above permutation is odd. As before, 
there is an induced homomorphism
\[\begin{CD}H_1({\Cal{M}}(N_{g,s}^n);H_1(N_{g,s}^n;\zz)) @>(\alpha',\beta) >> H_1(\zz_2;\zz_2)\cong \zz_2 \end{CD}\]
and 
\[0=(\alpha,\beta)(A+\nu v_{n,1}+\mu s_{1,1})=\mu.\]
This concludes the proof of Theorem \ref{MainThm2}.

% \input{Lift.tex}
% \input{Presentation_A.tex}
% \input{Presentation_B.tex}
% \input{Homology.tex}

%\bibliographystyle{abbrv}
%\bibliographystyle{elsart-num-sort}
%\bibliography{../../mybib.bib}

\end{document}